\theoremstyle{plain}
\numberwithin{equation}{section}
\newtheorem{theorem}{Theorem}[section]
\newtheorem{lemma}[theorem]{Lemma}
\newtheorem{corollary}[theorem]{Corollary}
\newtheorem{proposition}[theorem]{Proposition}
\theoremstyle{remark}
\newtheorem{definition}[theorem]{Definition}
\newtheorem{remark}[theorem]{Remark}
\newtheorem{hypothesis}[theorem]{Hypothesis}
\newcommand{\Pe}{{\overline{P}}}
\newcommand{\Ce}{{\overline{C}}}
\newcommand{\Cce}{{\overline{\mc{C}}}}
\newcommand{\dtv}{\textnormal{d}_{\textnormal{TV}}} 
\newcommand{\abs}[1]{\left\vert #1 \right\vert}
\newcommand{\prt}[1]{\left( #1 \right)}
\newcommand{\crt}[1]{\left[ #1 \right]}  
\newcommand{\veps}{\varepsilon}
\newcommand{\mc}[1]{{\mathcal #1}}
\newcommand{\yyzz}[2]{{Y}_{#1}(#2)}
\newcommand{\yyy}[3]{{Y}^{#1}_{#2}(\infty)} 
\newcommand{\yyyz}[3]{{Y}^{#1}_{#2}(#3)}
\newcommand{\bb}[1]{{\mathbb #1}}
\newcommand{\Ind}[1]{\mathbbm{1}_{#1}}
\newcommand{\ud}{\mathrm{d}}
\newcommand{\sgn}{\mathrm{sgn}}
\newcommand{\ii}{\mathsf{i}}
\begin{document}
\title[Convergence for Langevin dynamics on a degenerate potential]{Gradual convergence for Langevin dynamics on a degenerate potential}

\author{Gerardo~Barrera}
\address{University of Helsinki, Department of Mathematics and Statistics.
P.O. Box 68, Pietari Kalmin katu 5, FI-00014. Helsinki, Finland.\\
\url{gerardo.barreravargas@helsinki.fi}\\
\url{https://orcid.org/0000-0002-8012-2600}}
\thanks{*Corresponding author: Gerardo Barrera.}
\author{Conrado~da-Costa}
\address{Department of Mathematical Sciences, Durham University. Upper Mountjoy Campus, Durham DH1 3LE, United Kingdom.\\
\url{conrado.da-costa@durham.ac.uk}\\
\url{https://orcid.org/0000-0003-3466-6751}}
\author{Milton~Jara}
\address{Instituto de Matem\'atica Pura e Aplicada, IMPA. Estrada Dona Castorina, 110, 22460-320, Rio de Janeiro, Brasil.\\
\url{mjara@impa.br}\\}

\subjclass{Primary 60H10, 34D10, 34A34; Secondary 37A25, 82C31, 60J65}
\keywords{Coming down from infinity; Coupling; Degenerate fixed point; Langevin dynamics; Mixing times; Multi-scale analysis; No cut-off phenomenon; Total variation convergence}

\begin{abstract}
  In this paper, we study an ordinary differential equation with a
  degenerate global attractor at the origin, to which we add a white
  noise with a small parameter that regulates its intensity. Under
  general conditions, for any fixed intensity, as time tends to
  infinity, the solution of this stochastic dynamics converges
  exponentially fast in total variation distance to a unique
  equilibrium distribution. We suitably accelerate the random dynamics
  and show that the preceding convergence is gradual, that is,
  the function that associates to each fixed $t\geq 0$ the total variation
  distance between the accelerated random dynamics at time $t$ and
  its equilibrium distribution converges, as the noise intensity tends to zero, to a decreasing function with values in $(0,1)$. Moreover, we prove that this  limit function for each fixed $t \geq 0$ corresponds to the total variation distance between the marginal, at time $t$,  of a stochastic
  differential equation that comes down from infinity and its corresponding equilibrium distribution.  This
  completes the classification of all possible behaviors of the total
  variation distance between the time marginal of the aforementioned
  stochastic dynamics and its invariant measure for one dimensional
  well-behaved convex potentials.  In addition, there is no cut-off
  phenomenon for this one-parameter family of random processes and
  asymptotics of the mixing times are derived.
\end{abstract}
\maketitle

\section{\textbf{Introduction}} 
The study of random dynamical systems and their convergence to
equilibrium is one of the most studied subject in probability theory
and mathematical physics with a vast literature such as stochastic control~\cite{Arapostathis}, slow-fast systems~\cite{Berglund},
small noise
limit~\cite{BAKHTIN,FW,Martinelli,Martinelli1}, small noise asymptotics
for invariant densities~\cite{BAinv,Biswas, FW,Monmarche}, sharp estimates on
transit and exit times~\cite{Giacomin}, couplings and quantitative
contraction rates for Langevin dynamics~\cite{Eberle,Iacobucci,Veretennikov},
convergence to equilibrium in Fokker--Planck
equations~\cite{Bogachev,Bolley,Ji}, random attractors for stochastic
dissipative systems~\cite{Kuehn}, numerical computations of geometric
ergodicity~\cite{LELI,LiWang}, multi-scale analysis, ergodicity and exponential loss
of memory of the initial condition~\cite{KU, pages,Peng}, regularity for Lyapunov
exponents~\cite{SI}, metastability and large
deviations~\cite{VARES, Bovier}, optimal transport~\cite{Villani}, etc.

The goal of this paper is the study of the \textit{convergence to
  equilibrium} in the so-called \textit{zero-noise limit} for a family
of stochastic small random perturbations of a given one-dimensional
dynamical system.  We consider an ordinary differential equation with
a degenerate (non-hyperbolic) global attractor at the origin.  Under
appropriate conditions on the dynamics, as time increases, for any
initial condition the solution of this differential equation tends to
the origin polynomially fast. We then consider a perturbation of the
deterministic dynamics by a Brownian motion of small intensity.  This
random dynamics possesses a unique invariant probability measure and
for any initial condition, the solution converges in the total
variation distance to such invariant probability measure as times
increases.  We prove that the convergence occurs gradually, that is, 
when the strength of the noise $\veps$ tends to zero, with a suitable scaling
of time ($a_\veps, \veps >0$), the function that associates to each fixed $t\geq 0$ the total
variation distance between the marginal of the random  dynamics at
  time $a_\epsilon t$ and its equilibrium tends, as $\veps \to 0$,  to a \textit{decreasing}
  function with values in the open interval $(0,1)$, see also
  Definition~\ref{def:gradual} below.  This fact, with the help
  of Proposition \ref{idea} 
implies no cut-off phenomenon in the context of random
processes.

\subsection{\textbf{The degenerate Langevin dynamics}}
In this subsection, we specify the \textit{degenerate Langevin
  dynamics} that we consider in this paper.  Here we say that a
Langevin dynamics is degenerate when its vector field possesses a
 degenerate fixed (critical) point.

The Langevin dynamics was introduced by P. Langevin in 1908 in his
seminal article \cite{Langevin1908}. It is perhaps one of the most
popular models in molecular systems. For details on its history
and phenomenological treatment, we refer to \cite{COFFEY,Pomeau} and
the references therein.

Let $\veps\in (0,1]$ be the parameter that controls the intensity
of the noise and let
$X^{\veps}(x):=\left(X^{\veps}_t(x),{t \geq 0}\right)$ be the
unique strong solution of the one-dimensional Stochastic
Differential Equation (for short SDE)
\begin{equation}
\label{modelo}
\left\{
\begin{array}{r@{\;=\;}l}
  \ud X^\veps_t & -V^{\prime}(X^\veps_t) \ud t + \sqrt{\veps}\ud B _t
                  \quad\textrm{ for } \quad  t\geq  0, \\
  X^\veps_0 & x,
\end{array}
\right.
\end{equation} 
where $x\in \mathbb{R}$ is a deterministic initial condition,
$B:=\left(B_t,t\geq 0\right)$ is a one-dimensional standard Brownian
motion defined on a probability space
$(\Omega,\mathcal{F},\mathbb{P})$ and $V:\mathbb{R} \to [0,\infty)$ is
a given function that will be referred to as \emph{the
    potential}.  In order to avoid technicalities and since we  want to be able to use
It\^o's formula, we assume the following conditions for $V$.
\begin{hypothesis}[Regularity]\label{hyp1}
We assume that the potential $V$
is a twice continuously differentiable, convex and even function with $V(0)=0$.
\end{hypothesis}
\noindent

Since for the dynamics~\eqref{modelo} we only consider from the
  potential $V$ its derivative $V'$, the value of $V$ at $0$ is not crucial
  and it is only imposed in Hypothesis~\ref{hyp1} to fix a unique
  potential once given its derivative. Moreover,
 since $V$ is even and differentiable, we have $V^{\prime}(0)=0$. We
recall that $0$ is a degenerate fixed point when
$V^{\prime\prime}(0)=0$. In what follows, we assume the following
local behavior at $0$.

\begin{hypothesis}[Local behavior at the origin]\label{hyp2}
There exist positive constants $C_0$ and $\alpha$
such that
\begin{equation}
\label{LB}
\lim\limits_{\lambda \to 0}
\sup\limits_{|z|\leq 1}\left|\frac{V^\prime(\lambda z)}{\lambda^{1+\alpha}}-C_0|z|^{1+\alpha}\sgn(z) \right|=0,
\end{equation} 
where  $\sgn(z):=\;z\;|z|^{-1}\mathbbm{1}_{\{z \neq 0\}}$. 
\end{hypothesis}
\begin{remark}
  An intuition for Hypothesis~\ref{hyp2} is to think of it as a
  generalization of the behavior of the  monomial potential $V_0: \mathbb{R} \to [0,\infty)$ given by $V_0(x) : =|x|^{2 + \alpha}$
  with $\alpha>0$ to smooth potentials $V: \bb{R} \to [0,\infty)$ with leading behavior at the origin given by $C_0|x|^{2 + \alpha}$ and suitable $C_0>0$ as described in~\eqref{LB}. That is, $V(x)=C_0|x|^{2 + \alpha}+\mathrm{o}(|x|^{2 + \alpha})$ as $x\to 0$. 
  For instance, for the case $V(x) = |x|^{2 + \alpha}$, $x\in \mathbb{R}$, we have
  $\frac{V'(\lambda z)}{\lambda^{1 + \alpha}} = (2 + \alpha) |z|^{1+ \alpha}\sgn(z)$ for $z\in \mathbb{R}$ and $\lambda\not=0$,
  and hence Condition~\eqref{LB} is satisfied with $C_0:= 2+\alpha$.
  
Roughly speaking, the local behavior of the potential at the origin captured in~\eqref{LB} controls the convergence to equilibrium in~\eqref{modelo}. In fact, the convex potential $V$ drives the trajectories of~\eqref{modelo} to the origin and the convergence to equilibrium depends on the intensity of the noise and the strength of drift determined by the potential. See 
Section~\ref{sec:heuristics} for further heuristics.

We also point out that Hypothesis~\ref{hyp2} is equivalent to 
\begin{equation}
\label{LB-k}
\tag{\textbf{L}}
\lim\limits_{\lambda \to 0}
\sup\limits_{|z|\leq K}\left|\frac{V^\prime(\lambda z)}{\lambda^{1+\alpha}}-C_0|z|^{1+\alpha}\sgn(z) \right|=0\quad \textrm{ for any } \quad  K>0.
\end{equation}     
\end{remark}
\noindent
Finally, in order to control the growth of $V^\prime$ around infinity
and to ensure that~\eqref{modelo} has a unique invariant probability
measure, we assume the following growth condition.
\begin{hypothesis}[Growth at infinity]  \label{hyp3}
There exist $c_0 ,R_0 \in (0,\infty)$, and $\beta\in (-1,\infty)$ such that
\begin{equation}\label{CGnew}\tag{\textbf{G}}
V^\prime(z)\geq c_0 z^{1+\beta}
\quad \textrm{ for all  }\quad z\geq R_0.
\end{equation}
\end{hypothesis}

An interesting
example, which satisfies Hypothesis~\ref{hyp1}, Hypothesis~\ref{hyp2}
and Hypothesis~\ref{hyp3}, is the one-well potential
$V(z)=|z|^{2+\alpha}$, $z\in \mathbb{R}$ for some $\alpha>0$.
When $\alpha=0$, we have that~\eqref{modelo} corresponds to the
Ornstein--Uhlenbeck process which exhibits profile cut-off for
$x\not=0$ and it does not when $x=0$, for further details
see~\cite{BA} and~\cite{BJ}.  For $\alpha>0$ we have
$V^{\prime}(0)=V^{\prime\prime}(0)=0$ and hence
Theorem~2.1 in~\cite{BJ} cannot be applied.  In fact, in the
degenerate case, for any initial condition the convergence to
equilibrium is gradual, in the sense of Definition~\ref{def:gradual} below, which implies no cut-off. This is in stark contrast with the
Ornstein--Uhlenbeck process and it is natural from the dynamical point
of view, since the fixed point changes from hyperbolic to
non-hyperbolic (degenerate).  For instance, the qualitative behavior
of hyperbolic systems and degenerate systems are very different, the
former are structurally stable whereas the latter are not.  In the
hyperbolic attracting case, it is shown in Theorem~2.2 in~\cite{BJ}
that profile cut-off phenomenon holds true and the proof relies on the
Hartman--Grobman theorem, which breaks down at degenerate points. In
the present degenerate setting, we introduce a time space scaling and
obtain gradual convergence to equilibrium, see
Theorem~\ref{T:silhouette} below. Moreover, there is a qualitative change of
behavior in the model: the cut-off phenomenon is not present in this
setting, see Corollary~\ref{C:nocut} below. We also give asymptotics
for the mixing times in~\eqref{ec:limz} in  Corollary~\ref{C:nocut}.

Another example which underlies our motivation to study this type of model is  the so-called Ginzburg--Landau potential. More precisely, for a given $\eta\in \mathbb{R}$ the \textit{Ginzburg--Landau potential} 
$V_\eta:\mathbb{R}\to \mathbb{R}$ is defined by
\begin{equation}
\begin{split}
V_\eta(x):&= \cosh(x) - \frac{1}{2}\eta x^2\\
&=1+\frac{(1-\eta)x^2}{2!}+\frac{x^4}{4!}+\frac{x^6}{6!}+\cdots\quad \textrm{ for any }\quad x\in \mathbb{R}.
\end{split}
\end{equation}
Note that $V^{\prime}_\eta(0)=0$ for all $\eta\in
\mathbb{R}$. Moreover, for $\eta\leq 1$, we have that $V_\eta$ is
convex and $V^{\prime\prime}_\eta(x)\geq (1-\eta)$ for all
$x\in \mathbb{R}$ with $V^{\prime\prime}_\eta(0)=1-\eta$.  For
$\eta<1$, $V_\eta$ is coercive and hence Theorem~2.1 in~\cite{BJ}
applies, yielding the cut-off phenomenon (abrupt convergence) to
equilibrium for~\eqref{modelo}.  For $\eta>1$, $V_\eta$ is no longer
convex and has the classical double-well shape used in models which
exhibit metastability, see \cite{Beltran,Bovier,Galves,Landim,VARES}.
We point out that metastable models do not exhibit cut-off phenomenon, see~\cite{BBF1,BBF}.
Finally, at the critical value $\eta=1$, up to a translation, the
potential $V_1$ satisfies Hypothesis~\ref{hyp1}, Hypothesis~\ref{hyp2}
and Hypothesis~\ref{hyp3}. Therefore, Theorem~\ref{T:silhouette} below
yields gradual convergence to equilibrium for~\eqref{modelo} when the
potential is $V_1$.

With the present result, we improve the classification of the
behaviors of the total variation distance between the marginal
$X^{\veps}_t$ given in~\eqref{modelo} and its invariant measure for
one dimensional smooth convex potentials $V$. More precisely,
provided the convex potential $V$ satisfies regularity conditions, such as it grows at infinity and is well-behaved at the origin, then we can classify the convergence to equilibrium as follows.
\begin{itemize}
\item[(1)] \textit{Cut-off phenomenon}. This occurs when the fixed point of
  the deterministic dynamics associated to~\eqref{modelo} is
  hyperbolic, i.e., $V^{\prime\prime}(0)>0$, see Theorem~2.1
  in~\cite{BJ}.
\item[(2)] \textit{Gradual convergence to equilibrium}. This  occurs when the
  fixed point of the deterministic dynamics associated
  to~\eqref{modelo} is degenerate, i.e., $V^{\prime\prime}(0)=0$, see
  Theorem~\ref{T:silhouette} below.
\end{itemize}
When the potential $V$ is not convex and possesses finitely many
(hyperbolic) stable equilibria, it is well-known that
\textit{metastability phenomenon} occurs,
see~\cite{Beltran,Bovier,Galves,Landim,VARES}.

By Hypothesis~\ref{hyp1} 
the SDE~\eqref{modelo} has a unique strong solution, see Theorem~3.5 in~\cite[p.58]{Mao} or Theorem 10.2.2 in~\cite[p.255]{SV}. Hence, $X^{\veps}(x)$ is a well-defined
stochastic process on the probability space
$(\Omega,\mathcal{F},\mathbb{P})$.  Furthermore,
Lemma~\ref{medidainvariante} below yields that~\eqref{modelo} is exponentially ergodic in total variation distance 
with a unique invariant probability measure $\mu^\veps$ given by 
\begin{equation}\label{formulamu}
  \mu^\veps(\ud z)=\frac{e^{-\frac{2}{\veps}V(z)}}{C_\veps}\ud z\qquad \text { with }
  \qquad C_\veps: = \int_{\bb{R}} e^{-\frac{2}{\veps}V(y)}\,\ud y.
\end{equation}

\subsection{\textbf{Results}}
In this section, before we state the main results of the paper,
we recall the definition of the total variation distance and fix some
conventions.

In the sequel, we adopt the convention that $\sgn(0)\infty=0$ and
since $\veps\in (0,1]$, for simplicity we write $\veps\to 0$
instead of $\veps\to 0^+$.  We point out that for any
$x\in \mathbb{R}$ and $t>0$, the marginal $X^\veps_t(x)$ is absolutely
continuous with respect to the Lebesgue measure on $\mathbb{R}$. Then
we measure the distance between the law of $X^\veps_t(x)$ and its
limiting distribution $\mu^\veps$ by the total variation distance,
defined by
\begin{equation}
\dtv(\nu_1,\nu_2):=\sup\limits_{F\in \mathcal{F}}|\nu_1(F)-\nu_2(F)|
\end{equation}
for any $\nu_1$ and $\nu_2$ probability measures in the same
measurable space $(\Omega,\mathcal{F})$.  For convenience, we do not
distinguish a random variable $X_1$ and its law $\mathbb{P}_{X_1}$ as
an argument of $\dtv$. In other words, for random variables $X_1$ and
$X_2$ and probability measure $\mu$ we write $\dtv(X_1,X_2)$ in place
of $\dtv(\mathbb{P}_{X_1},\mathbb{P}_{X_2})$ and write $\dtv(X_1,\mu)$ instead
of $\dtv(\mathbb{P}_{X_1},\mu)$.  For further details on the total
variation distance, we refer to~\cite[Ch.~2]{KU}
or~\cite[Sec.~3.3]{Reiss}.

To state our main result, we introduce the following definition.
\begin{definition}[Gradual convergence]\label{def:gradual}
  For each $\veps \in (0,1]$, let $X^\veps = (X^\veps_t, t \geq 0)$ be
  a stochastic process with unique invariant probability measure $\mu^\veps$, and
  fix a deterministic $a_\veps>0$. We say that the family of stochastic processes
  $(X^\veps, \veps \in(0,1])$ \emph{exhibits gradual convergence to
    equilibrium at scale $(a_\veps, \veps >0)$ with respect to the
    total variation distance as $\veps \to 0$}, when the map $d_\veps:(0,\infty)\to [0,1]$ defined by 
  \[
  d_\veps(t): =\dtv(X^\veps_{a_\veps t}, \mu^\veps),\quad t\geq 0,
  \]
  converges as $\veps$ tends to zero to a function $d_0: (0,\infty) \to (0,1)$ in the sense
  that for almost all $t>0$
  \[
  \lim_{\veps \to 0} d_\veps(t) = d_0(t).
  \]
  We say that the
  family of stochastic processes $(X^\veps, \veps \in(0,1])$
  \emph{exhibits gradual convergence to equilibrium with respect to the total
    variation distance} or simply \emph{exhibits gradual convergence}
  when there is a scale $(a_\veps, \veps >0)$ for which the family of
  stochastic processes $(X^\veps, \veps \in(0,1])$ \emph{exhibits
    gradual convergence to equilibrium at scale $(a_\veps, \veps >0)$
    with respect to the total variation distance as $\veps \to 0$}.
\end{definition}
\begin{remark}
We point out that gradual convergence and cut-off do not form a dichotomy. Indeed,  in principle, one could have a process with a unique
  invariant measure that converges after scaling to a profile function $f$  such that
  \begin{equation}
    \label{eq:no-gradual-no-cut}
    f(t):=
    \begin{cases}
      1 & \text{ if } 0\leq t \leq 1,\\
      \frac{1}{2} & \text{ if } t \in (1,2],\\
      0 & \text{ if } t >2.
    \end{cases}
  \end{equation}
  The above profile  function is not compatible with gradual
  convergence, where we require $f(t) \in (0,1)$ for all $t >0$ nor it
  is compatible with cut-off, as the function does not drop abruptly
  to $0$, since reaches a plateau with
  value $\frac{1}{2}$ for $ t \in (1,2]$. One example for this would be to consider two decks of cards, one red, one black. Assume that the shuffling for the black deck of cards converges to equilibrium after scaling at the deterministic time $1$ and that the shuffling for the red one, converges to equilibrium at the deterministic time $2$. If we select the deck of cards according to the outcome of a fair coin toss, red deck if the coin lands ``heads'' and the black deck if the coin lands ``tails''. This process will exhibit convergence to a profile \eqref{eq:no-gradual-no-cut} which is not gradual not cut-off. We point out that discontinuous profile functions may arise in which one still retains gradual convergence, see for instance \cite{CapQua20}.
\end{remark}
The main result of this paper whose proof is given in Section~\ref{outline} is the following. 
\begin{theorem}\label{T:silhouette}
  Assume that Hypothesis~\ref{hyp1}, Hypothesis~\ref{hyp2} and
  Hypothesis~\ref{hyp3} hold true.  For $\veps\in (0,1]$ and
  $x\in \mathbb{R}$, let $X^\veps(x)$ be the unique strong solution
  of~\eqref{modelo} and denote by $\mu^\veps$ its unique invariant
  probability measure.  Define the scaling parameter
\begin{equation}\label{eq:paraeps}
a_\veps:= \veps^{-\frac{\alpha}{2 + \alpha}}, \quad \textrm{ where $\alpha>0$ is given in } Hypothesis~\ref{hyp2}.
\end{equation}
Then for any $t>0$ it follows that
\begin{equation}\label{Gtsil0}
  \lim_{\veps\to 0}\dtv\prt{X^{\veps}_{ta_\veps}(x), \mu^\veps}=
  \dtv\prt{Y_t(\sgn(x)\infty) , \nu}\in (0,1),
\end{equation}
where $Y_t(\sgn(x)\infty):=\lim\limits_{r\to  \infty}Y_t(\sgn(x)r)$ and for $r >0$, $(Y_t(\sgn(x)r), t \geq 0)$ is the strong solution of the SDE 
\begin{equation}
\label{Y0sil} 
\left\{
\begin{array}{r@{\;=\;}l}
\ud Y_t & -C_0|Y_t|^{1+\alpha}\sgn(Y_t)\ud t+ \ud W_t\quad\textrm{ for } \quad t>  0,\\
Y_0 & \sgn(x)r,
\end{array}
\right.
\end{equation}
$(W_t,t\geq 0)$ is a standard Brownian motion, $\nu$ is the unique
invariant probability measure for~\eqref{Y0sil}, and the constant
$C_0$ is defined in Hypothesis~\ref{hyp2}.  Moreover, the map
\begin{equation}
\label{eq:mapa_decrescente}
 t \mapsto  \dtv\prt{Y_t(\sgn(x)\infty) , \nu} \text{
  is continuous and strictly decreasing.}
\end{equation}
\end{theorem}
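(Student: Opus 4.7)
The plan is to reduce the theorem to a convergence statement for a suitably rescaled SDE whose drift converges to that of~\eqref{Y0sil}. Let $b_\veps:=\veps^{-1/(2+\alpha)}$ and define $Y^\veps_t:= b_\veps X^\veps_{a_\veps t}(x)$. Using It\^o's formula, Brownian self-similarity, and the identities $b_\veps^{2}a_\veps\veps=1$ and $a_\veps=b_\veps^{\alpha}$, one verifies that $Y^\veps$ satisfies
\begin{equation*}
\ud Y^\veps_t =-\, a_\veps b_\veps V^\prime(Y^\veps_t/b_\veps)\, \ud t+\ud\widetilde{B}_t, \qquad Y^\veps_0=b_\veps x,
\end{equation*}
for some standard Brownian motion $\widetilde{B}$. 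Hypothesis~\ref{hyp2} in the form~\eqref{LB-k} then implies that for every $K>0$,
\begin{equation*}
\sup_{|y|\leq K}\left|a_\veps b_\veps V^\prime(y/b_\veps)-C_0|y|^{1+\alpha}\sgn(y)\right|\xrightarrow[\veps\to 0]{}0,
\end{equation*}
so the drift of $Y^\veps$ converges locally uniformly to that of~\eqref{Y0sil}, and $Y^\veps_0=b_\veps x\to \sgn(x)\infty$ with the convention $\sgn(0)\infty=0$. Since the total variation distance is invariant under the bijection $y\mapsto b_\veps y$, writing $\nu^\veps$ for the pushforward of $\mu^\veps$ under this map, we have $\dtv(X^\veps_{a_\veps t}(x),\mu^\veps)=\dtv(Y^\veps_t,\nu^\veps)$. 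That $\nu^\veps\to\nu$ in total variation is a direct computation starting from~\eqref{formulamu}: pointwise convergence of the rescaled density follows from~\eqref{LB-k}, while Hypothesis~\ref{hyp3} provides the uniform integrability needed to invoke dominated convergence. It therefore remains to show $\dtv(Y^\veps_t, Y_t(\sgn(x)\infty))\to 0$.

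This last step is the main obstacle, the chief difficulty being the diverging initial condition $Y^\veps_0=b_\veps x$ when $x\neq 0$. The central mechanism is that the super-linear drift $|y|^{1+\alpha}$ with $\alpha>0$ makes the limit SDE~\eqref{Y0sil} \emph{come down from infinity}: the limit $Y_t(\sgn(x)\infty):=\lim_{z\to\sgn(x)\infty}Y_t(z)$ is a well-defined random variable possessing a continuous Lebesgue density at every $t>0$, see e.g.~\cite{BAKHTIN}. To transfer this to the pre-limit, I would split the interval $[0,t]$ at a small $s\in(0,t)$: on $[0,s]$, moment and comparison estimates, uniform in $\veps$ and in the initial point, show that $Y^\veps_s$ and $Y_s(\sgn(x)\infty)$ are concentrated on a common bounded set and that their laws at time $s$ are close in a suitable weak sense as $\veps\to 0$ and $s\downarrow 0$; on $[s,t]$, the locally uniform convergence of the drift, together with a synchronous coupling and parabolic regularity of the associated Fokker--Planck equation, upgrades this to convergence in total variation of $Y^\veps_t$ to $Y_t(\sgn(x)\infty)$. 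The case $x=0$ is easier, for then $Y^\veps_0=0=Y_0$ and classical stability of SDEs with locally uniformly converging drifts plus smoothness of transition densities suffices.

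Finally, I address the qualitative properties of the limiting map. Parabolic regularity ensures that the law of $Y_t(\sgn(x)\infty)$ admits a continuous, strictly positive density on $\mathbb{R}$ for every $t>0$, and the same holds for $\nu$; since two strictly positive densities on $\mathbb{R}$ are not mutually singular, $\dtv(Y_t(\sgn(x)\infty),\nu)<1$. On the other hand, by uniqueness of the Fokker--Planck equation the law of $Y_t(\sgn(x)\infty)$ cannot coincide with the stationary measure $\nu$ at any finite $t$ (otherwise it would agree for all times, contradicting the singular behavior as $t\downarrow 0$), so $\dtv(Y_t(\sgn(x)\infty),\nu)>0$. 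Continuity in $t$ is inherited from the $L^1$-continuity of the density of $Y_t$. Strict monotonicity follows from the contraction of the Markov semigroup $(P_s)_{s\geq 0}$ associated to~\eqref{Y0sil} in the total variation distance combined with a coupling argument: for each $s>0$, the non-degeneracy of the Brownian noise yields positive probability of merging two copies of $Y$ with distinct initial laws during a time interval of length $s$, producing the strict inequality
\begin{equation*}
\dtv(Y_{t+s}(\sgn(x)\infty),\nu)=\dtv(P_s\,\mathrm{Law}(Y_t(\sgn(x)\infty)),P_s\nu)<\dtv(Y_t(\sgn(x)\infty),\nu).
\end{equation*}
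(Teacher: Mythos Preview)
Your overall strategy---rescale, identify the limiting SDE that comes down from infinity, prove convergence of invariant measures, prove convergence of the marginal, and finally establish the qualitative properties of the limiting profile---matches the paper's architecture closely. The scaling computation is correct (your $b_\veps$ is the reciprocal of the paper's), and your treatment of the invariant measures and of the qualitative properties is in the right spirit.

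There is, however, a real gap in the step $\dtv(Y^\veps_t,Y_t(\sgn(x)\infty))\to 0$. Hypothesis~\ref{hyp3} gives only a \emph{lower} bound $V'(z)\ge c_0 z^{1+\beta}$ for large $z$; there is no upper bound on the growth of $V'$. Your sketch appeals to ``parabolic regularity'' and to moment/comparison estimates on $[s,t]$, but any such argument---whether via Girsanov, via the Kabanov--Liptser--Shiryaev bound the paper uses, or via smoothing of the Fokker--Planck semigroup---ultimately needs control of quantities like $\mathbb{E}\big[|F_\veps(Y^\veps_s)|^p\big]$ uniformly in $\veps$, and these can blow up if $V'$ grows super-exponentially. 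The paper closes this gap by a \emph{localization} step (Lemma~\ref{lem:replacement}): since the unrescaled process $X^\veps_{a_\veps t}(x)$ stays in a fixed neighbourhood of $x$ with high probability (because $\veps a_\veps\to 0$), one may replace $V$ by a potential $\widetilde V$ that agrees with $V$ on $[-L,L]$ but additionally satisfies $|\widetilde V'(z)|\le C e^{z^2}$ (Hypothesis~\ref{hyp4}). All subsequent estimates for the rescaled process are carried out with $\widetilde V$, where the needed moment bounds hold. Without this step your ``on $[s,t]$'' argument is not justified.

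A smaller point: your argument for $\dtv(Y_t(\sgn(x)\infty),\nu)>0$ asserts that if the law equalled $\nu$ at some $t$, it ``would agree for all times, contradicting the singular behavior as $t\downarrow 0$''. Equality at time $t$ propagates \emph{forward} by invariance, not backward; to push backward you would need backward uniqueness for the Fokker--Planck equation, which you have not established. The paper instead uses forward injectivity of the evolution map together with the observation that for some small $\delta<t$ the law of $Y_\delta(\sgn(x)\infty)$ is not $\nu$ (clear since $Y_\delta(\sgn(x)\infty)\to\sgn(x)\infty$ as $\delta\downarrow 0$), from which $\varphi(\mu^\infty_\delta,t-\delta)\neq\varphi(\nu,t-\delta)=\nu$.
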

The computation of the scaling~\eqref{eq:paraeps} is given in Subsection~\ref{sec:multiscale}.

We point out that $Y_t(\sgn(x)\infty)$ comes down from
infinity, that is, $Y_t(\sgn(x)\infty)\in \mathbb{R}$ for any $t>0$.
It is not surprising that an equation in the form of~\eqref{Y0sil} should ``come down from infinity'' and should admit a continuous Markovian extension. Since we did not find a reference with a full proof of this result, we have devoted Appendix~\ref{Zproof} to explain this in detail. The continuous
Markovian extension of the SDE~\eqref{Y0sil} to $\overline{\bb{R}}: =\bb{R} \cup \{\pm \infty\}$ is done in detail using
basic ODE/probabilistic techniques in~Appendix~\ref{Zproof} and here
we only outline the main steps.  First, based on a monotonic
comparison, which follows from the synchronous coupling, and uniform
second moment bounds for $x \in \bb{R}$, the SDE~\eqref{Y0sil} can be
extended to $\overline{\bb{R}}$, see Section~\ref{cdown}.  Then
because $\pm \infty$ are entrance boundaries for the dynamics in
$\bb{R}$, the extended family
$(Y(x):=(Y_t(x),t\geq 0), x \in \overline{\bb{R}})$ is Markovian, see
Section~\ref{mprop}.  The rigorous definition of~\eqref{Y0sil} is
given in Proposition~\ref{P:limitSDE} below.  Moreover,
Theorem~\ref{T:silhouette} actually provides the essentially unique
scale $(a_\veps,\veps\in (0,1])$ that captures the convergence to
equilibrium. In fact, since by the Chapman--Kolmogorov equation,
the map $t\mapsto\dtv\prt{X^\veps_t(x) , \mu^\veps}$ is
non-increasing, any sequence $(t_\veps, \veps\in (0,1])$ for which
\begin{equation}\label{eq:another_scale}
 \lim_{\veps\to 0}\dtv\prt{X^{\veps}_{t_\veps}(x), \mu^\veps}=
  \dtv\prt{Y_t(\sgn(x)\infty) , \nu}
\end{equation}
must satisfy 
\begin{equation}\label{eq:unique_scale}
\lim_{\veps \to 0} \frac{t_\veps}{a_\veps} =t.
\end{equation}
Indeed, assume that $\limsup_{\veps \to 0} \frac{t_\veps}{a_\veps} > t
  +\delta$ for some $\delta>0$, then by \eqref{Gtsil0} and \eqref{eq:mapa_decrescente}
  \begin{align*}
\liminf_{\veps \to 0}
    \dtv\prt{X^{\veps}_{t_\veps}(x), \mu^\veps}
    &\leq \liminf_{\veps \to 0} \dtv\prt{X^{\veps}_{(t + \delta) a_\veps}(x), \mu^\veps} \\
    &=  \dtv\prt{Y_{t+ \delta}(\sgn(x)\infty) , \nu}< \dtv\prt{Y_{t}(\sgn(x)\infty) , \nu}.
  \end{align*}
 Similarly, if $\liminf_{\veps \to 0} \frac{t_\veps}{a_\veps} < t
 -\delta$ for some  $\delta\in (0,t)$, then
  \begin{align*}
\liminf_{\veps \to 0}
    \dtv\prt{X^{\veps}_{t_\veps}(x), \mu^\veps}
    &\geq \liminf_{\veps \to 0} \dtv\prt{X^{\veps}_{(t - \delta) a_\veps}(x), \mu^\veps} \\
    &=  \dtv\prt{Y_{t- \delta}(\sgn(x)\infty) , \nu}> \dtv\prt{Y_{t}(\sgn(x)\infty) , \nu}.
  \end{align*}

We now fix a (rough) notion of abrupt convergence to equilibrium, the cut-off phenomenon. 
For further details, see Definition~1.8 in~\cite{BHP} and also~\cite{BBF1}.
\begin{definition}[Cut-off phenomenon]\label{def:cutoff}
  For each $\veps \in (0,1]$, let
    $X^\veps = (X^\veps_t, t \geq 0)$ be a stochastic process with
    unique invariant probability measure $\mu^\veps$ and deterministic $t_\veps >0$.  We say
  that the one-parameter family of stochastic processes
  $(X^\veps,\veps\in(0,1])$ \emph{exhibits cut-off  with respect
    to the total variation distance at scale
    $(t_\veps, \veps \in (0,1])$ when $t_\veps\to \infty$ as
  $\veps\to 0$} and
\begin{equation}\label{eq-def:cutoff}
\lim\limits_{\veps\to 0}\dtv\prt{X^{\veps}_{\delta t_\veps}, \mu^\veps}=
\begin{cases}
1  &\quad \textrm{ for }\quad \delta\in (0,1),\\
0  &\quad \textrm{ for }\quad \delta\in (1,\infty).\\
\end{cases}
\end{equation}
We say that the family of stochastic processes
$(X^\veps,\veps\in(0,1])$  \emph{exhibits cut-off with respect to
  the total variation distance} or simply \emph{exhibits cut-off} if there is a
scale $(t_\veps,\veps\in (0,1])$ such that $t_\veps\to \infty$
and \eqref{eq-def:cutoff} holds true.
\end{definition}
Similar arguments leading to \eqref{eq:unique_scale} allows one to obtain that the scale in the cut-off phenomenon is essentially unique, 
see also~\cite{Martinez}.

Roughly speaking, one generally expects that a one-parameter family of
well-mixing stochastic processes will exhibit abrupt convergence of
the marginals to the equilibrium distribution as a function of the
parameter.  This is known in the literature as the \textit{cut-off
  phenomenon} introduced by~\cite{AD} in the context of card
shuffling.  Actually, the notion of cut-off applies to a wide range of
random models.  In the discrete setting, the cut-off phenomenon has
been proved for many different models such as Markovian shuffling
cards and random transpositions~\cite{AD, BD,DI,DSH,Nestoridi}, random walks on the  hypercube~\cite{PDI,Levin}, birth and death chains~\cite{BBF1,BBF}, sparse Markov chains~\cite{Bordenave2019}, Glauber dynamics~\cite{Ding}, SSEP dynamics~\cite{Goncalves},
  SEP in the circle~\cite{Lacoin,Lacoin2017}, random walks in
  random regular graphs~\cite{LS}, Ornstein--Uhlenbeck
processes~\cite{BA,BPH2,JB, Lachaud}, mean field zero-range
process~\cite{Merle}, averaging processes~\cite{Quattropani}, sampling  chains~\cite{BAmax,Ycart1999}, star transpositions~\cite{Nestoridi1}, etc.   There
are relatively few examples of Markov processes, taking values in
continuous state-spaces for which the cut-off phenomenon has been
studied, such processes include linear and nonlinear SDEs driven by
small L\'evy noise~\cite{BPH2, BHP, BPH3, BPH4, BJ,BJ1,BLiu,BP},
Dyson--Ornstein--Uhlenbeck process~\cite{Boursier}, the biased
adjacent walk on the simplex~\cite{labbepetit}, Brownian motion on families of compact
  Riemannian manifolds~\cite{CSC}. 
  
\textbf{No cut-off:} A classical example of a Markov
dynamics that does not exhibit cut-off phenomenon is the random walk on
the circle $\mathbb{Z}_n$, see Example~18.5~in~\cite[Ch.18,
p.253]{Levin} or~\cite[Thm.~2.2.1, p.55]{Ceccherini}. 
Numerical results yields that the cut-off phenomenon does not occur for the entropy in the sense of information theory,
see~\cite{Trefethen}.
It has been also
proved that the ``insect Markov chain'' does not have cut-off,
see~\cite{Dangeli}. Moreover, it has been showed the absence of cut-off
for several classes of trees, including spherically symmetric trees,
Galton--Watson trees of a fixed height, and sequences of random trees
converging to the Brownian CRT, see \cite{Chiclana,Gantert}.  More
recently, it is shown that the TASEP in the coexistence line does not
have cut-off, see~\cite{Elboim}, and that there is no cut-off for sparse chains, see Corollary~4 in~\cite{Munch}.

By the Chapman--Kolmogorov equation, for any $x\in \mathbb{R}$ and $\veps\in (0,1]$ it follows that the map
\begin{equation}\label{eq:monotona}
t\mapsto d^{\veps}_t(x):=\dtv(X^{\veps}_t(x),\mu^{\veps})\quad \textrm{ is non-increasing. }
\end{equation}
By ergodicity, see Lemma \ref{medidainvariante} below, $\lim_{t\to \infty} d^{\veps}_t(x) = 0$.
This allows us to define for any $x\in \mathbb{R}$,
$\veps \in (0,1)$, and
$\eta\in (0,1)$, the
$\eta$-mixing time for the process $(X^{\veps}_t(x), t \geq 0)$ by
\begin{equation}\label{tmix-def}
  \tau^{\veps,x}_{\textsf{mix}}(\eta):=
  \inf\{t\geq 0:  d^{\veps}_t(x)\leq \eta \}.
\end{equation}
That is, one seeks the time required by the process $X^{\veps}(x)$ for
the total variation distance to its invariant measure $\mu^{\veps}$ to
be equal to or smaller than a prescribed error $\eta$.

The phenomenon of cut-off can be detected with the help of the notion
of mixing times, see for instance~\cite{Hunter,LS} or Chapter 18 of~\cite{Levin}. Following 
Equation~(18.3) in~\cite{Levin}, the cut-off
phenomenon
is equivalent to the following relation between mixing times
\begin{equation}
  \label{eq:mix-equiv-cutoff}
 \lim_{\veps \to 0} \frac{ \tau^{\veps,x}_{\textsf{mix}}(\eta)}{
    \tau^{\veps,x}_{\textsf{mix}}(1-\eta)} = 1  \quad \text{ for all }
  \quad 
  \eta \in (0,1).
\end{equation}
Due to its natural relevance, it has been extensively studied in many
stochastic models, see for instance
\cite{Anderson, Avena, Ding, Gantert1, Hermon, Hunter, Levin, Oliveira,
  Pillai, SCO} and the references therein.

The following proposition provides a rather general technique to prove
no cut-off in ergodic systems. 
For convenience we keep the notation and parameters that are proper to our model,
but we emphasize that the result can be seen as a method for proving no-cut-off valid for 
any stochastic process satisfying the hypotheses in Proposition~\ref{idea} below.
In rough terms, it states that if there
exists a non-trivial behavior for a suitable scale, then there is no
cut-off for any scale.

\begin{proposition}[No cut-off and mixing times asymptotics]\label{idea}
  Let $x\in \mathbb{R}$ be given and assume that the scale
  $(a_\veps = a_{\veps}(x), {\veps\in (0,1]})$
  satisfies 
\begin{itemize}
\item[i)] $\lim\limits_{\veps\rightarrow 0}a_{\veps}=\infty$.
\item[ii)] For any $t>0$ 
\begin{equation}\label{eq:away_from_0-1}    0<\liminf\limits_{\veps\rightarrow 0}d^{\veps}_{a_{\veps}t}(x)\leq \limsup\limits_{\veps\rightarrow 0}d^{\veps}_{a_{\veps}t}(x)<1.
\end{equation}
\end{itemize}  
Then there is no cut-off for the family $(X^{\veps}(x),{\veps \in (0,1]})$
as $\veps$ tends to zero.  In addition, if the following limit exists
\begin{equation}
    \label{eq:lim_exists}
    \lim_{\veps\rightarrow 0}d^{\veps}_{a_{\veps}t}(x)=G_x(t)\in (0,1)
\end{equation}
and the map $t\mapsto G_x(t)$ is continuous and strictly decreasing
then, for any $\eta \in (0,1)$,
\begin{equation}\label{tmix-rescaled}
  \lim_{\veps \to 0}\frac{\tau^{\veps,x}_{\textsf{mix}}(\eta)}{a_\veps} =
  \inf\{t>0: G_x(t)\leq \eta\}.
\end{equation}
\end{proposition}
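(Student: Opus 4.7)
The plan is to treat the two assertions essentially independently, the first by contradiction and the second as a direct consequence of~\eqref{eq:lim_exists} combined with the definition of the mixing time.

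\textbf{Step 1: No cut-off.} I would argue by contradiction: assume that $(X^{\veps}(x),\veps\in(0,1])$ exhibits cut-off in the sense of Definition~\ref{def:cutoff} at some scale $(t_{\veps})$. By compactness, any subsequence of $(a_{\veps}/t_{\veps})$ admits a further subsequence that converges in $[0,\infty]$ to some $c$; it suffices to produce a contradiction with hypothesis (ii) along any such subsequence, using only the monotonicity~\eqref{eq:monotona} and the two one-sided cut-off limits. If $c>0$, choose any $t>0$ with $ct>2$ (any $t>0$ works when $c=\infty$); then eventually $a_{\veps}t\geq \tfrac{3}{2}t_{\veps}$ and
\begin{equation}
d^{\veps}_{a_{\veps}t}(x)\;\leq\; d^{\veps}_{\tfrac{3}{2}t_{\veps}}(x)\;\longrightarrow\; 0,
\end{equation}
contradicting $\liminf_{\veps\to 0} d^{\veps}_{a_{\veps}t}(x)>0$. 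If instead $c=0$, then for any $t>0$ eventually $a_{\veps}t\leq \tfrac{1}{2}t_{\veps}$, so
\begin{equation}
d^{\veps}_{a_{\veps}t}(x)\;\geq\; d^{\veps}_{\tfrac{1}{2}t_{\veps}}(x)\;\longrightarrow\; 1,
\end{equation}
contradicting $\limsup_{\veps\to 0}d^{\veps}_{a_{\veps}t}(x)<1$. Since no accumulation point of $a_{\veps}/t_{\veps}$ is compatible with both halves of (ii), cut-off cannot hold.

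\textbf{Step 2: Mixing-time asymptotics.} Set $t^{*}:=\inf\{t>0:G_{x}(t)\leq \eta\}\in[0,\infty]$. Continuity and strict monotonicity of $G_{x}$ force $G_{x}(t)<\eta$ for every $t>t^{*}$ and $G_{x}(t)>\eta$ for every $t<t^{*}$. For the upper bound, fix $t>t^{*}$; then by~\eqref{eq:lim_exists} there exists $\veps_{0}>0$ such that $d^{\veps}_{a_{\veps}t}(x)\leq \eta$ for all $\veps<\veps_{0}$, and the definition~\eqref{tmix-def} of the mixing time immediately yields $\tau^{\veps,x}_{\textsf{mix}}(\eta)\leq a_{\veps}t$. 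Letting $t\downarrow t^{*}$ gives $\limsup_{\veps\to 0}\tau^{\veps,x}_{\textsf{mix}}(\eta)/a_{\veps}\leq t^{*}$. For the lower bound, fix $t<t^{*}$ (if any such $t$ exists); then eventually $d^{\veps}_{a_{\veps}t}(x)>\eta$, which by monotonicity~\eqref{eq:monotona} of $s\mapsto d^{\veps}_{s}(x)$ forces $\tau^{\veps,x}_{\textsf{mix}}(\eta)>a_{\veps}t$; taking $t\uparrow t^{*}$ yields $\liminf_{\veps\to 0}\tau^{\veps,x}_{\textsf{mix}}(\eta)/a_{\veps}\geq t^{*}$. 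The degenerate cases $t^{*}=0$ and $t^{*}=\infty$ follow from only one of these two bounds.

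\textbf{Main obstacle.} The delicate part is the subsequence bookkeeping in Step~1: one must exhibit, for each possible accumulation value $c$ of $a_{\veps}/t_{\veps}$, a single $t>0$ for which hypothesis (ii) is violated by the cut-off scale. The strict inequalities $0<\liminf$ and $\limsup<1$ provide exactly the slack needed so that monotonicity coupled with the one-sided cut-off limits yields the required contradiction. Once no cut-off has been established in this way, the mixing-time asymptotic is essentially a direct transcription of~\eqref{eq:lim_exists} through~\eqref{tmix-def}.
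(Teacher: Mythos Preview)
Your proposal is correct and follows essentially the same approach as the paper. In Step~1 the paper argues directly by taking an arbitrary candidate scale $(t_\veps)$ and splitting into the cases $\limsup_{\veps\to 0} t_\veps/a_\veps<\infty$ and $=\infty$, whereas you phrase it as a contradiction via subsequential limits of $a_\veps/t_\veps$ in $[0,\infty]$; the underlying mechanism (monotonicity of $t\mapsto d^\veps_t(x)$ combined with the one-sided cut-off limits to violate~\eqref{eq:away_from_0-1}) is identical. Step~2 matches the paper's argument almost verbatim, with your explicit handling of the degenerate cases $t^*=0$ and $t^*=\infty$ being a small refinement.
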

The following result, which establishes the gradual convergence to
equilibrium of the family defined in \eqref{modelo} is a consequence
of Theorem~\ref{T:silhouette} and Proposition~\ref{idea}.
\begin{corollary}[No cut-off phenomenon]\label{C:nocut}
  With the assumptions and notations of Theorem~\ref{T:silhouette},
  for any $x\in \mathbb{R}$, the family of processes
  $(X^\veps(x),\veps\in (0,1])$ does not exhibit cut-off
  as $\veps\to 0$.  In addition,
\begin{equation}\label{ec:limz}
\lim\limits_{\veps \to
  0}\frac{\tau^{\veps,x}_{\textsf{mix}}(\eta)}{a_\veps}
=\inf\{t\geq 0: \dtv\prt{Y_t(\sgn(x)\infty) , \nu}\leq \eta\}.
\end{equation}
\end{corollary}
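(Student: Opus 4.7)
The plan is to derive Corollary~\ref{C:nocut} as a direct application of Proposition~\ref{idea} with the scale $a_\veps$ supplied by Theorem~\ref{T:silhouette}; all of the substantive analytic content has already been packaged in Theorem~\ref{T:silhouette}, and what remains is essentially bookkeeping.

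First I would verify the two hypotheses of Proposition~\ref{idea}. Since $\alpha>0$ by Hypothesis~\ref{hyp2}, the exponent $\alpha/(2+\alpha)$ is strictly positive, and hence $a_\veps=\veps^{-\alpha/(2+\alpha)}\to\infty$ as $\veps\to 0$, which is condition (i). For condition (ii), I would read it off Theorem~\ref{T:silhouette}: for each fixed $t>0$,
\begin{equation*}
\lim_{\veps\to 0} d^{\veps}_{a_\veps t}(x) = \dtv\prt{Y_t(\sgn(x)\infty),\nu}\in(0,1),
\end{equation*}
so the $\liminf$ and $\limsup$ in \eqref{eq:away_from_0-1} coincide with this limit and are automatically bounded strictly away from $0$ and from $1$. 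The first conclusion of Proposition~\ref{idea} then yields the absence of cut-off for $(X^\veps(x),\veps\in(0,1])$ in total variation as $\veps\to 0$.

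For the mixing time asymptotics \eqref{ec:limz}, set $G_x(t):=\dtv\prt{Y_t(\sgn(x)\infty),\nu}$. The display above shows that the limit \eqref{eq:lim_exists} exists and equals $G_x(t)\in(0,1)$, while the assertion \eqref{eq:mapa_decrescente} of Theorem~\ref{T:silhouette} guarantees that $G_x$ is continuous and strictly decreasing on $(0,\infty)$. Hence the hypotheses of the second part of Proposition~\ref{idea} hold, and invoking \eqref{tmix-rescaled} gives
\begin{equation*}
\lim_{\veps\to 0}\frac{\tau^{\veps,x}_{\textsf{mix}}(\eta)}{a_\veps}
=\inf\{t>0:G_x(t)\leq \eta\}=\inf\{t\geq 0:\dtv\prt{Y_t(\sgn(x)\infty),\nu}\leq \eta\},
\end{equation*}
which is exactly \eqref{ec:limz}; the replacement of $\inf\{t>0:\cdot\}$ by $\inf\{t\geq 0:\cdot\}$ is harmless because $G_x$ is right-continuous at $0$ with value in $(0,1]$.

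I do not expect a genuine obstacle at this step. The delicate work, namely showing that the rescaled total variation profile remains strictly bounded away from both endpoints, identifying the limiting profile as the total variation distance between the coming-down-from-infinity SDE~\eqref{Y0sil} and its invariant measure $\nu$, and establishing strict monotonicity together with continuity of that profile, is precisely the content of Theorem~\ref{T:silhouette}. Proposition~\ref{idea} is a general purpose reduction that converts such a non-degeneracy statement into the no-cut-off conclusion together with the mixing time formula, and the present corollary simply invokes this reduction with the scale and limit supplied by Theorem~\ref{T:silhouette}.
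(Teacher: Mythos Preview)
Your proposal is correct and matches the paper's own proof essentially line for line: both verify conditions (i) and (ii) of Proposition~\ref{idea} using \eqref{eq:paraeps} and \eqref{Gtsil0} from Theorem~\ref{T:silhouette}, then invoke \eqref{eq:mapa_decrescente} for the continuity and strict monotonicity of $G_x$ to conclude \eqref{tmix-rescaled}.
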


\begin{proof}
  By Theorem~\ref{T:silhouette} and Proposition~\ref{idea} we obtain
  Corollary~\ref{C:nocut}. Indeed, by~\eqref{eq:paraeps}
  and~\eqref{Gtsil0} we obtain that the scale function
  $(a_{\veps}, {\veps\in (0,1]})$ given in
  Theorem~\ref{T:silhouette} satisfies conditions i) and ii) of
  Proposition~\ref{idea}  with
  \begin{equation}\label{mix-scale-G}
  \begin{split}
    G_x(t) &= \lim_{\veps\rightarrow 0}d^{\veps}_{a_{\veps}t}(x)=
    \lim_{\veps\rightarrow 0} \dtv(X^{\veps}_t(x),\mu^{\veps})\\
    & = 
    \dtv\prt{Y_t(\sgn(x)\infty) , \nu} \in (0,1).
  \end{split}
  \end{equation}
  Therefore, the family of processes $(X^\veps(x),\veps\in (0,1])$ does
  not exhibit cut-off  as $\veps\to 0$.
  To obtain \eqref{ec:limz} it suffices to combine \eqref{tmix-rescaled} with
  \eqref{mix-scale-G} and to note,  by \eqref{eq:mapa_decrescente}, that the map $t\mapsto G_x(t)$ is continuous
  and strictly decreasing.
\end{proof}

\textbf{Structure of the paper.}  In Section~\ref{outline} we explain
the proof of Theorem~\ref{T:silhouette} and in
Section~\ref{proofs} we complete the outline of the proof of
Theorem~\ref{T:silhouette}. The Appendix is divided in four
sections. Appendix~\ref{Zproof} proves that the process defined
in~\eqref{Y0sil} admits a continuous Markovian extension to $\overline{\bb{R}}$.
Appendix~\ref{apmore} is devoted to the proof of uniform bounds for
the entrance on compact sets, a crucial estimate to control the
coupling rate of the process with the equilibrium measure.  In
Appendix \ref{app:convergence-invariant-measure} we give the proofs of
the convergence of the invariant measures for the processes
$X^\veps(x)$ after suitable scaling.  Appendix~\ref{sec:complements}
contains results of technical nature that we collect to make the
presentation more self-contained.

\section{\textbf{Proof of Theorem~\ref{T:silhouette}}}\label{outline}
This section is divided in five parts. Firstly, we give an heuristic
argument for~\eqref{Gtsil0}. Secondly, we examine, for fixed $\veps\in (0,1]$,
convergence to the unique invariant measure of $X^\veps_t(x)$ as
$t \to \infty$. Thirdly, we perform a scale analysis to deduce
$(a_\veps, \veps\in (0,1])$. Fourthly, we introduce a localization
argument which allows us to simplify the potential $V$ under analysis.
Finally, we state the key results used in the proof of~\eqref{Gtsil0}.

\subsection{\textbf{Heuristics}}\label{sec:heuristics}
Assume that $V$ satisfies 
Hypothesis~\ref{hyp1}, Hypothesis~\ref{hyp2} and Hypothesis~\ref{hyp3}. 
Let $\varphi(x):=\left(\varphi_{t}(x),t\geq 0\right)$
be the solution of the  Ordinary Differential Equation (for short ODE)
\begin{equation}
\label{modelod}
\left\{
\begin{array}{r@{\;=\;}l}
\ud \varphi_t & -V^{\prime}(\varphi_t) \ud t\quad \textrm{ for } \quad t\geq 0,\\
\varphi_0 & x.
\end{array}
\right.
\end{equation}
The intuitive reason to consider~\eqref{modelod} is that, with high
probability, at early stages of the random evolution~\eqref{modelo},
the process stays close to the deterministic
evolution~\eqref{modelod}.  By the contracting nature of the random
evolution~\eqref{modelo}, which follows from the convexity of $V$ (Hypothesis \ref{hyp1}) and its growth condition at infinity (Hypothesis \ref{hyp3}), for every
$x \neq 0$ the noise gets dissipated and the process is driven to
zero, falling back into the stream of the deterministic evolution.
For this reason, \eqref{modelod}~is actually a good approximation
of~\eqref{modelo} for a long period of time and for large times,
  what matters is the behavior of $V$ at zero. Moreover, 
Hypothesis~\ref{hyp2} ensures the drift in \eqref{modelod} can be approximated at the origin,
$V^{\prime}(z) \sim C_0 \abs{z}^{1 + \alpha}\sgn(z)$, and the properly
rescaled process should converge to $Y(\sgn(x)\infty)$ as
$\veps\to 0$, where $Y(x)=\left(Y_t(x), t \geq 0\right)$ is the unique
strong solution of the following SDE
\begin{equation}\label{Y00}
\left\{
\begin{array}{r@{\;=\;}l}
\ud Y_t & -C_0|Y_t|^{1+\alpha}\sgn(Y_t)\ud t+ \ud W_t\quad \textrm{ for }\quad t\geq 0,\\
Y_0 & x.
\end{array}
\right.
\end{equation}
The exact scale and validity of the replacement of \eqref{modelo} by  \eqref{Y00} is not immediate and is explained in the remainder this section. 
\subsection{\textbf{The invariant probability measure}}
By the next lemma,~\eqref{modelo} admits a unique invariant probability measure 
$\mu^\veps$.
\begin{lemma}[Exponential ergodicity] 
\label{medidainvariante} 
Assume $V$ satisfies Hypothesis~\ref{hyp1} and Hypothesis~\ref{hyp3}.
Let $\veps\in (0,1]$ be fixed and for each $x \in \bb{R}$ let
$X^\veps(x)$ be the unique strong solution of~\eqref{modelo}. Then
there exists a unique probability measure $\mu^\veps$ such that for
any $c>0$ there are positive constants $C_1=C_1(c,\veps)$ and
$C_2=C_2(c,\veps)$ for which
\begin{equation}\label{zero}
\dtv(X^{\veps}_t(x),\mu^{\veps})\leq 
C_1 e^{-C_2 t}
\left(e^{c |x|}+\int_{\mathbb{R}}e^{c |y|}\mu^{\veps}(\ud y)\right)
\quad \textrm{ for all }\quad x\in \mathbb{R},\,\, t\geq 0.
\end{equation}
Furthermore,  $\mu^\veps$ is absolutely
continuous  with respect to the
Lebesgue measure on $\mathbb{R}$ and its density
$\rho^\veps: \bb{R}\to (0,\infty)$ is given by 
\begin{equation}\label{formula}
  \rho^\veps( x)=\frac{e^{-\frac{2}{\veps}V(x)}}{C_\veps}\qquad \text { with }
  \qquad C_\veps: = \int_{\bb{R}} e^{-\frac{2}{\veps}V(y)}\,\ud y.
\end{equation}
\end{lemma}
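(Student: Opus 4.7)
My plan is to prove~\eqref{formula} by a direct adjoint computation, and to obtain the exponential ergodicity~\eqref{zero} by combining a Foster--Lyapunov drift condition based on an exponential test function with a minorization on compact sets, i.e., by the Harris--Meyn--Tweedie theorem.

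For the invariant density, the generator of~\eqref{modelo} is $\mc{L}_\veps f(x) = -V'(x) f'(x) + \tfrac{\veps}{2} f''(x)$, whose formal adjoint on densities is $\mc{L}_\veps^* \rho(x) = (V'(x)\rho(x))' + \tfrac{\veps}{2} \rho''(x)$. A direct calculation shows that $\rho^\veps(x) := e^{-2V(x)/\veps}/C_\veps$ satisfies $\mc{L}_\veps^* \rho^\veps \equiv 0$. Hypothesis~\ref{hyp3} combined with the convexity and evenness of $V$ (integrating~\eqref{CGnew} from $R_0$ gives $V(z) \geq \tilde c_0 |z|^{2+\beta}$ for $|z|$ large, and $2+\beta > 1$) ensures $C_\veps < \infty$, so $\rho^\veps$ is a genuine probability density. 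Uniqueness of the invariant measure will follow from the irreducibility and strong Feller property of the uniformly elliptic 1D diffusion.

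To establish the drift condition, I would take the Lyapunov function $W_c(x) := \cosh(cx)$ with $c = c(\veps) > 0$. Itô's formula gives
\begin{equation*}
\mc{L}_\veps W_c(x) = -c\, V'(x) \sinh(cx) + \tfrac{\veps c^2}{2}\, \cosh(cx).
\end{equation*}
Since $V$ is even and convex, $V'$ is odd and non-decreasing, so $V'(x)\sinh(cx) \geq 0$ for every $x \in \bb{R}$. By~\eqref{CGnew} and the evenness of $V$, one has $V'(x)\sgn(x) \geq c_0 |x|^{1+\beta}$ for $|x|\geq R_0$; writing $|\sinh(cx)| = \tanh(c|x|)\cosh(cx)$, we obtain
\begin{equation*}
\mc{L}_\veps W_c(x) \leq \left(-c\, c_0\, |x|^{1+\beta}\tanh(c|x|) + \tfrac{\veps c^2}{2}\right) \cosh(cx) \leq -\gamma\, W_c(x)
\end{equation*}
for all $|x|$ sufficiently large, where $\gamma = \gamma(c,\veps) > 0$, using $\tanh(c|x|)\to 1$ and $|x|^{1+\beta}\to\infty$ as $|x|\to\infty$ (possible since $\beta > -1$). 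Together with the local boundedness of $\mc{L}_\veps W_c$, this yields $\mc{L}_\veps W_c \leq -\gamma W_c + K\,\Ind{[-R,R]}$ for some $R, K > 0$.

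For the minorization step, on the compact set $[-R,R]$ the SDE~\eqref{modelo} is uniformly elliptic with smooth coefficients, so by standard parabolic regularity the transition density $p^\veps_{t_0}(x,y)$ is jointly continuous and strictly positive; hence for any fixed $t_0>0$ there exist $\delta > 0$ and a probability measure $\eta$ concentrated on $[-R,R]$ with $\bb{P}(X^\veps_{t_0}(x) \in A) \geq \delta\, \eta(A)$ for every $x \in [-R,R]$ and every Borel set $A$. The drift inequality together with this minorization is exactly the input to Harris' theorem (see e.g.\ Hairer--Mattingly), which produces exponential contraction in the weighted norm $\|\nu_1 - \nu_2\|_{W_c} := \sup_{|f|\leq 1+W_c}|\nu_1(f)-\nu_2(f)|$. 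Specializing to $\nu_1 = \bb{P}_{X^\veps_t(x)}$ and $\nu_2 = \mu^\veps$, and using $2\dtv \leq \|\cdot\|_{W_c}$ along with $1+\cosh(cy)\leq 2 e^{c|y|}$, we recover~\eqref{zero} after absorbing all constants into $C_1(c,\veps)$ and $C_2(c,\veps)$. The main obstacle I anticipate is justifying that the \emph{exponential} Lyapunov function $\cosh(cx)$ is admissible for every $c>0$ (so that \eqref{zero} holds with that $c$): this requires verifying that the super-linear coercivity $V'(x)\sgn(x) \gtrsim |x|^{1+\beta}$ dominates the exponential weight $\tanh(c|x|)$ for each $c$, which works in our setting precisely because $|x|^{1+\beta}\to \infty$ and $\tanh(c|x|)\to 1$; no sharp dependence on $\veps$ is needed here since the $\veps\to 0$ regime is handled separately in Theorem~\ref{T:silhouette}.
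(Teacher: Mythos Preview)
Your argument is correct and follows the same drift--plus--minorization (Harris/Meyn--Tweedie) scheme that underlies the paper's proof; the paper simply invokes Theorem~3.3.4 of Kulik's monograph~\cite{KU} by verifying the drift condition $-V'(z)z|z|^{-1-\kappa}\to -\infty$ (and cites~\cite{pavliotis} for the Gibbs density), whereas you unpack the machinery with the explicit Lyapunov function $W_c(x)=\cosh(cx)$. One small wording remark: the parenthetical ``$c=c(\veps)$'' is misleading---the lemma requires the bound for \emph{every} $c>0$ with $\veps$ fixed, and your computation indeed works uniformly in $c$ since $|x|^{1+\beta}\tanh(c|x|)\to\infty$ regardless of $c$; just drop that parenthesis.
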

The proof of Lemma~\ref{medidainvariante} is given in Appendix~\ref{sec:complements}.

\subsection{\textbf{Scale analysis}}\label{sec:multiscale}
In this section we clarify  the scaling factor in~\eqref{eq:paraeps} and the need to consider $Y_t(\sgn(x)\infty):=\lim\limits_{r\to  \infty}Y_t(\sgn(x)r)$
given in Theorem~\ref{T:silhouette}.
Define, for $t\geq 0$,
\begin{equation}\label{Yrescale}
\mathcal{X}^{\veps,x}_t:=\frac{X^{\veps}_{a_\veps t}(x)}{b_\veps}
\end{equation}
and let us determine the time and space scaling parameters
$a_\veps>0$ and $b_\veps>0$. By It\^o's formula the stochastic process 
$(\mathcal{X}^{\veps,x}_t,t\geq 0)$ has the same law as $(Y^\veps_t(x b_\veps^{-1}),t\geq 0)$,
where $Y^\veps(y):=(Y^\veps_t(y),t\geq 0)$ is the unique strong solution of the following SDE
\begin{equation}
\label{Ysde}
\left\{
\begin{array}{r@{\;=\;}l}
\ud Y^{\veps}_t & -\frac{a_\veps}{b_\veps}V^{\prime}(b_\veps Y^{\veps}_t)\ud t+ \frac{\sqrt{\veps a_\veps}}{b_\veps}\ud B_t\quad \textrm{ for } \quad t\geq 0,\\
Y^{\veps}_0& y.
\end{array}
\right.
\end{equation}
In what follows, we shall refer to the process $Y^\veps(y)$ as the
  rescaled process, since it arises from the rescaling of the process $X^\veps$.
Note that, by Hypothesis~\ref{hyp2}, if $b_\veps \to 0$ as $\veps \to 0$ then for any $z\in \bb{R}$ 
\begin{equation}\label{eq:approximation}
\frac{a_\veps}{b_\veps}V^{\prime}(b_\veps z)\sim 
C_0 \frac{a_\veps}{b_\veps} \abs{b_\veps z}^{1+\alpha}\sgn(z)=C_0 a_\veps b_{\veps}^\alpha \abs{z}^{1+\alpha}\sgn(z).
\end{equation}
Therefore, to obtain a non-trivial limit  for $\mathcal{X}^{\veps,x}_t$ we remove the scaling factors of \eqref{Ysde} by defining the pair $a_\veps$ and $b_\veps$  to be the solution of the system
\begin{equation}\label{scale_sys}
\left\{
\begin{array}{r@{\;=\;}l}
\frac{\sqrt{\veps a_\veps}}{b_\veps} & 1,\\
a_\veps b^{\alpha}_\veps & 1.
\end{array}
\right.
\end{equation}
The solution of~\eqref{scale_sys} is given by
\begin{equation}\label{abscales}
a_\veps=\veps^{-\frac{\alpha}{2+\alpha}}\qquad \text{ and } \qquad
b_\veps=\veps^{\frac{1}{2+\alpha}}.
\end{equation}
Condition~\eqref{scale_sys} sets the scale analysis to a fixed
magnitude of the noise ($\veps a_\veps= b^2_\veps$) and a constant
strength of the velocity field at the origin
($a_\veps b^{\alpha}_\veps = 1$).  By \eqref{eq:approximation} and
\eqref{scale_sys} the dynamics of \eqref{Ysde} converges to the
dynamics \eqref{Y00} on compact intervals as $\veps \to 0$. However,
for any initial condition $x\neq 0$ of \eqref{modelo}, the family of
processes we consider after scaling,
$(\mathcal{X}^{\veps,x}_t, t\geq 0)$, have initial condition
$x b_\veps^{-1}$, which diverges as $\veps \to 0$.  Therefore,
the zero-noise limit of~\eqref{Ysde} requires a rigorous analysis at
infinity.

\subsection{\textbf{Coupling near the origin}}\label{sub:counear}
In this section we show that the problem in Theorem \ref{T:silhouette}
is local.  That is, we prove that one may replace $V'$ in
\eqref{modelo} with the derivative of a suitable function $\widetilde{V}$
that is well behaved at a neighbourhood of the origin and satisfies
mild growth conditions. More precisely, let $x$ be the initial
condition of~\eqref{modelo}.  In the sequel, we consider a convex
potential $\widetilde{V}=\widetilde{V}_{x}$ that satisfies
\begin{equation}\label{Vcoup}
\widetilde{V}(z)=V(z)\quad \textrm{ for any $z$ with}\quad  |z|\leq L,
\end{equation}
where $L>0$ is such that $L^2\geq 1+|x|^2$.
Additionally, we assume the following growth condition. 
\begin{hypothesis}[Polynomial growth at infinity]\label{hyp4}
There exist positive constants $c$, $C$ and $R$  such that
\begin{equation}
\label{GC}
\tag{\textbf{G1}}
\widetilde{V}^{\prime}(z)\geq c z^{1+\alpha}\quad \textrm{ for } \quad z\geq R
\end{equation}
and such that
\begin{equation}\label{Ge}
\tag{\textbf{G2}}
|\widetilde{V}^{\prime}(z)| \leq  C e^{z^2} \quad \textrm{ for } \quad \abs{z} \geq R,
\end{equation}
where $\alpha>0$ is given in Hypothesis~\ref{hyp2}.
\end{hypothesis}
Furthermore, note that $\widetilde{V}$ satisfies Hypothesis~\ref{hyp1} and Hypothesis~\ref{hyp2}.
The existence of $\widetilde{V}$ is guaranteed by Lemma~\ref{lem:Vtilde} in Appendix~\ref{sec:complements}.

For each $\veps\in (0,1]$ and $x\in \mathbb{R}$
we consider the unique strong solution $\widetilde{X}^\veps(x):=(\widetilde{X}^\veps_t(x))_{t\geq 0}$ of the SDE 
\begin{equation}
\label{modelote}
\left\{
\begin{array}{r@{\;=\;}l}
\ud \widetilde{X}^\veps_t & -\widetilde{V}^{\prime}(\widetilde{X}^\veps_t) \ud t + \sqrt{\veps}\ud B _t\quad\textrm{ for }\quad  t\geq  0, \\
\widetilde{X}^\veps_0 & x.
\end{array}
\right.
\end{equation}
Since $\widetilde{V}$ is a convex function, Theorem~3.5
in~\cite[p.58]{Mao} yields that the SDE~\eqref{modelote} has a unique
strong solution.  Furthermore, Lemma~\ref{medidainvariante} implies
that~\eqref{modelote} possesses a unique invariant probability measure
$\widetilde{\mu}^\veps$.  Recall that $\mu^\veps$ is the unique
invariant probability measure for~\eqref{modelo} and for any $t\geq 0$
let
\begin{equation}\label{tvbasic}
  d^{\veps}_t(x):= \dtv\prt{X^{\veps}_t(x),\mu^\veps}\qquad \textrm{ and }\qquad \widetilde{d}^{\veps}_{t}(x):=\dtv(\widetilde{X}^{\veps}_{t}(x),\widetilde{\mu}^\veps).
\end{equation}
The next lemma yields that  $d^{\veps}_{a_{\veps} t}(x)$ and $\widetilde{d}^{\veps}_{a_{\veps} t}(x)$ are asymptotically equivalent in the following precise sense.
\begin{lemma}[Localization and replacement of potentials]\label{lem:replacement}
For all $x\in \mathbb{R}$ and $t\geq 0$ it follows that
\begin{equation}\label{eq:1}
\lim_{\veps \to 0} |d^{\veps}_{a_{\veps}
      t}(x)-\widetilde{d}^{\veps}_{a_{\veps} t}(x)| = 0,
\end{equation}
where $(a_\veps,\veps\in [0,1))$ is defined in~\eqref{eq:paraeps}.
\end{lemma}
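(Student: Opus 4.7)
The plan is to set up a synchronous coupling of $X^\veps(x)$ and $\widetilde X^\veps(x)$, driven by the same Brownian motion $B$ and the common initial condition $x$, and then to reduce the lemma to two vanishing estimates via the triangle inequality. Since $V\equiv\widetilde V$ on $[-L,L]$ and both SDEs satisfy pathwise uniqueness (Theorem~3.5 in~\cite{Mao}), the coupled paths coincide up to the first exit time
\[
\tau^\veps := \inf\{s\geq 0 \;:\; |X^\veps_s(x)|>L\}.
\]
Writing $\mu^\veps$ and $\widetilde\mu^\veps$ for the invariant measures, the triangle inequality then yields
\[
\bigl|d^\veps_{a_\veps t}(x)-\widetilde d^\veps_{a_\veps t}(x)\bigr|
\;\leq\; \mathbb{P}(\tau^\veps\leq a_\veps t) + \dtv(\mu^\veps,\widetilde\mu^\veps),
\]
so it suffices to show that each term on the right vanishes as $\veps\to 0$.

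The invariant-measure comparison is the easier piece. Using the explicit Boltzmann densities $\rho^\veps\propto e^{-2V/\veps}$ and $\widetilde\rho^\veps\propto e^{-2\widetilde V/\veps}$ from Lemma~\ref{medidainvariante}, which agree on $[-L,L]$ up to normalization, the superlinear growth of $V$ at infinity (Hypothesis~\ref{hyp3}) together with \eqref{GC}--\eqref{Ge} for $\widetilde V$ implies via Laplace asymptotics that the tails $\mu^\veps(\{|z|>L\})$ and $\widetilde\mu^\veps(\{|z|>L\})$ are of order $\exp(-c/\veps)$, and that the ratio of normalizing constants $C_\veps/\widetilde C_\veps$ converges to $1$ at the same rate. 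Consequently $\dtv(\mu^\veps,\widetilde\mu^\veps)$ vanishes super-polynomially fast.

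The hard part is the exit-time bound $\mathbb{P}(\tau^\veps\leq a_\veps t)\to 0$, because the horizon $a_\veps t = \veps^{-\alpha/(2+\alpha)}\,t$ diverges as $\veps\to 0$. Morally, the scale analysis of Subsection~\ref{sec:multiscale} predicts that $X^\veps$ concentrates on a neighbourhood of the origin of size $b_\veps=\veps^{1/(2+\alpha)}\to 0$ while $L>|x|$ remains fixed, so the event $\{\tau^\veps\leq a_\veps t\}$ should be exponentially rare. I would quantify this through a Foster--Lyapunov argument: applied to the weight $\varphi(z)=\exp(\lambda V(z)/\veps)$ with $\lambda\in(0,2)$, Itô's formula produces a cross term $-\lambda(1-\lambda/2)\veps^{-1}(V'(z))^2\varphi(z)$ that, thanks to Hypothesis~\ref{hyp3}, dissipates for $|z|$ large, and combined with the compact-set drift bound yields $\mathcal{L}\varphi\leq K\varphi$ for some constant $K$ independent of $\veps$. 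Doob's maximal inequality applied to the supermartingale $\varphi(X^\veps_t)e^{-Kt}$ then gives
\[
\mathbb{P}(\tau^\veps\leq a_\veps t)\;\leq\; \frac{\varphi(x)}{\varphi(L)}\,e^{Ka_\veps t}
\;=\; \exp\Bigl(Ka_\veps t - \tfrac{\lambda}{\veps}\bigl(V(L)-V(x)\bigr)\Bigr),
\]
and since $V(L)>V(x)$ (as $V$ is even and strictly increasing on $[0,\infty)$ with $|x|<L$) while $a_\veps=\veps^{-\alpha/(2+\alpha)}$ grows strictly slower than $\veps^{-1}$, the right-hand side tends to $0$. Combining the two vanishing pieces delivers \eqref{eq:1}.
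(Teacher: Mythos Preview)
Your overall architecture matches the paper's exactly: the triangle inequality reduces the claim to the two estimates $\dtv(X^\veps_{a_\veps t}(x),\widetilde X^\veps_{a_\veps t}(x))\to 0$ and $\dtv(\mu^\veps,\widetilde\mu^\veps)\to 0$, and the first is then controlled via the synchronous coupling by $\mathbb{P}(\tau^\veps\le a_\veps t)$. The differences are in how you dispatch the two pieces.

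For the exit-time bound the paper takes a much lighter route than your exponential Lyapunov function. It applies It\^o's formula to $|\widetilde X^\veps_t|^2$, uses convexity to drop the drift term $-2\widetilde X\,\widetilde V'(\widetilde X)\le 0$, and is left with $|x|^2+\veps t$ plus a martingale whose quadratic variation is $4\veps\int_0^t|\widetilde X_s|^2\,\ud s$. The crucial observation is simply that $\veps a_\veps=\veps^{2/(2+\alpha)}\to 0$, so at the horizon $a_\veps t$ both the It\^o correction and (via Doob and the crude moment bound $\mathbb{E}|\widetilde X_s|^2\le |x|^2+\veps s$) the martingale fluctuation are $o(1)$; this yields a polynomial bound $\mathbb{P}(\tau^\veps\le a_\veps t)\le C\veps a_\veps t$. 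Your argument with $\varphi=\exp(\lambda V/\veps)$ is correct and in fact gives an exponentially small bound, but it is heavier and your claim that $\mathcal{L}\varphi\le K\varphi$ holds globally is not quite right (there is no upper bound on $V''$ in the hypotheses); the inequality is only needed on $[-L,L]$ after stopping at $\tau^\veps$, which is what actually makes your optional-stopping step go through.

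For the invariant measures the paper does not argue directly via Laplace tails as you do. Instead it shows (Lemma~\ref{lem:couplingINV}) that both $b_\veps^{-1}X^\veps_\infty$ and $b_\veps^{-1}\widetilde X^\veps_\infty$ converge in total variation to the same limit $Y_\infty$, via pointwise convergence of densities (Lemma~\ref{claim1in}, Lemma~\ref{lem:constantes}) and Scheff\'e's lemma, and then invokes scale invariance of $\dtv$. Your direct Laplace approach is valid and gives a sharper (super-polynomial) rate; the paper's detour through the limit $\nu$ has the advantage that the same lemma is reused later for item~(3) of Proposition~\ref{P:limitSDE}.
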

The proof of Lemma~\ref{lem:replacement} is given in Subsection~\ref{sub:replacement}.
By Lemma~\ref{lem:replacement} it is enough to show Theorem~\ref{T:silhouette} under Hypothesis~\ref{hyp1}, Hypothesis~\ref{hyp2} and Hypothesis~\ref{hyp4}.

\subsection{\textbf{Bound via limit replacements}}
From this point onwards, we assume that $V$ satisfies  Hypothesis~\ref{hyp1}, Hypothesis~\ref{hyp2} and Hypothesis~\ref{hyp4}.

Due to the scale invariance of the total variation distance
($\dtv(cX,cY)=\dtv(X,Y)$ for any $c\neq 0$ and any pair of random variables $X$ and  $Y$, see for instance Lemma~A.1 in~\cite{BP}) the distance
$d^{\veps}_{a_\veps t}(x)$ in~\eqref{tvbasic} can be expressed in
terms of $Y^\veps_t$ and $\nu^{\veps}$, a ``scalar multiple" of
$\mu^\veps$.  For convenience, we denote by $X^\veps_\infty$ a random
variable with the law $\mu^\veps$ and by $Y_\infty$ a random variable
with law $\nu$ which is the unique invariant probability measure
for~\eqref{Y0sil}. With this notation, we have that $\nu^{\veps}$ is
the law of $Y^\veps_\infty:=b^{-1}_{\veps}X^\veps_\infty$ and
therefore
\begin{equation}\label{tv_scale_inv}
\begin{split}
  d^{\veps}_{a_{\veps} t}(x)&=\dtv(X^{\veps}_{a_\veps t}(x),X^\veps_\infty)\\
  &=
  \dtv\prt{b^{-1}_\veps X^{\veps}_{a_{\veps} t}(x), b^{-1}_{\veps}X^\veps_\infty }=
  \dtv\prt{\mathcal{X}^{\veps,x}_t,Y^\veps_\infty},
\end{split}
\end{equation}
where $\mathcal{X}^{\veps,x}_t$ is given in~\eqref{Yrescale}.
Now, by the triangle inequality  we have
\begin{equation}\label{primera}
\begin{split}
d^{\veps}_{a_{\veps} t}(x)&\leq \dtv\prt{\mathcal{X}^{\veps,x}_t,Y_t(\sgn(x)\infty)}\\
&\qquad+
\dtv\prt{Y_t(\sgn(x)\infty),Y_{\infty}}+
\dtv\prt{Y_{\infty},Y^\veps_\infty},
\end{split}
\end{equation}  
We stress that $Y_t(\sgn(x)\infty)$ is well-defined as we show in
Proposition~\ref{P:limitSDE} below.  Informally, the idea is that the
drift dominates the noise and is strong enough to ensure that the
process comes down from infinity.  The triangle inequality also
implies that
\begin{equation}\label{segunda}
\begin{split}
  \dtv\prt{Y_t(\sgn(x)\infty),Y_{\infty}} &\leq 
  \dtv\prt{Y_t(\sgn(x)\infty),\mathcal{X}^{\veps,x}_t}
  + d^{\veps}_{a_{\veps} t}(x)\\
  &\qquad
  +\dtv\prt{Y^\veps_\infty,Y_{\infty}}.
\end{split}
\end{equation}
Combining~\eqref{primera} and~\eqref{segunda} we obtain the following \textit{key estimate} that we state as a lemma.
\begin{lemma}[Decoupling inequality]
Assume Hypothesis~\ref{hyp1}, Hypothesis~\ref{hyp2} and Hypothesis~\ref{hyp4} hold true. 
Then for any $x\in \mathbb{R}$, $\veps>0$ and $t\geq 0$ it follows that
\begin{equation}\label{key}
|d^{\veps}_{a_{\veps} t}(x)- \dtv\prt{Y_t(\sgn(x)\infty),Y_{\infty}}|\leq 
 \dtv\prt{\mathcal{X}^{\veps,x}_t,Y_t(\sgn(x)\infty)}
+\dtv\prt{Y^\veps_\infty,Y_{\infty}}.
\end{equation}
\end{lemma}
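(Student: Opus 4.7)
The inequality is a purely formal consequence of the triangle inequality combined with the scale invariance of the total variation distance, and in fact all the ingredients are already displayed in equations~\eqref{tv_scale_inv}, \eqref{primera} and \eqref{segunda} immediately preceding the statement. My plan is therefore to collect these ingredients into a single, self-contained argument.

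The starting observation is that, by Lemma~A.1 in~\cite{BP} (scale invariance), for any $c\neq 0$ and any random variables $U,V$ one has $\dtv(cU,cV)=\dtv(U,V)$. Applied with $c=b_\veps^{-1}$, and using that by definition $\mathcal{X}^{\veps,x}_t = b_\veps^{-1}X^\veps_{a_\veps t}(x)$ and $Y^\veps_\infty = b_\veps^{-1}X^\veps_\infty$, this yields the identity
\begin{equation}
d^{\veps}_{a_\veps t}(x)=\dtv\prt{X^\veps_{a_\veps t}(x),X^\veps_\infty}=\dtv\prt{\mathcal{X}^{\veps,x}_t,Y^\veps_\infty}.
\end{equation}

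Next, I would insert $Y_t(\sgn(x)\infty)$ and $Y_\infty$ into the right-hand side by two applications of the triangle inequality for $\dtv$. The first application, going from $\mathcal{X}^{\veps,x}_t$ to $Y^\veps_\infty$ via the two intermediate laws, gives
\begin{equation}
d^{\veps}_{a_\veps t}(x)\leq \dtv\prt{\mathcal{X}^{\veps,x}_t,Y_t(\sgn(x)\infty)}+\dtv\prt{Y_t(\sgn(x)\infty),Y_\infty}+\dtv\prt{Y_\infty,Y^\veps_\infty},
\end{equation}
which rearranges to $d^{\veps}_{a_\veps t}(x)-\dtv(Y_t(\sgn(x)\infty),Y_\infty)\leq \dtv(\mathcal{X}^{\veps,x}_t,Y_t(\sgn(x)\infty))+\dtv(Y^\veps_\infty,Y_\infty)$. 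The symmetric application, going from $Y_t(\sgn(x)\infty)$ to $Y_\infty$ via $\mathcal{X}^{\veps,x}_t$ and $Y^\veps_\infty$, produces
\begin{equation}
\dtv\prt{Y_t(\sgn(x)\infty),Y_\infty}\leq \dtv\prt{Y_t(\sgn(x)\infty),\mathcal{X}^{\veps,x}_t}+d^{\veps}_{a_\veps t}(x)+\dtv\prt{Y^\veps_\infty,Y_\infty},
\end{equation}
which rearranges to the reverse one-sided bound. Taking the maximum of the two rearrangements gives the claimed absolute value inequality~\eqref{key}.

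There is essentially no obstacle here: the well-definedness of $Y_t(\sgn(x)\infty)$ as a limiting law (deferred to Proposition~\ref{P:limitSDE}) and the existence/uniqueness of $\nu$, $\mu^\veps$ are cited from earlier results, and the scale invariance of $\dtv$ is a standard fact recorded in Lemma~A.1 of~\cite{BP}. The only point to double-check is that the identity $\mathcal{X}^{\veps,x}_t\stackrel{d}{=}Y^\veps_t(xb_\veps^{-1})$ established via It\^o's formula right after~\eqref{Yrescale} is compatible with interpreting $Y^\veps_\infty$ as $b_\veps^{-1}X^\veps_\infty$, i.e.\ that the invariant measure of $(Y^\veps_t)$ is the pushforward of $\mu^\veps$ by $z\mapsto z/b_\veps$; this is immediate from the change of variables in~\eqref{formulamu} together with uniqueness of the invariant measure from Lemma~\ref{medidainvariante}.
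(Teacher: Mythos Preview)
Your argument is correct and matches the paper's own proof exactly: the lemma is stated precisely as the combination of~\eqref{tv_scale_inv}, \eqref{primera} and~\eqref{segunda}, i.e.\ scale invariance of $\dtv$ followed by two triangle-inequality chains, which is what you reproduce. There is nothing to add.
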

The following proposition
states that the right-hand side of~\eqref{key}
tends to zero as $\veps \to 0$.
\begin{proposition}[]\label{P:limitSDE}
Assume Hypothesis~\ref{hyp1}, Hypothesis~\ref{hyp2} and Hypothesis~\ref{hyp4} hold true.
Then the following holds true:
\begin{itemize}
\item[(1)] \emph{Continuous Markovian extension:} The real valued process defined by~\eqref{Y00}
admits a continuous Markovian extension to
$\overline{\bb{R}}:= \bb{R} \cup\{\pm\infty\}$.
\item[(2)]\emph{Convergence for fixed marginal:}
For all $x \in \bb{R}$ and $t >0$ it follows that
\begin{equation}\label{limitSDE}
\lim\limits_{\veps\to 0}\dtv\prt{Y_t(\sgn(x)\infty),\mathcal{X}^{\veps,x}_t}=0,
\end{equation}
where $(\mathcal{X}^{\veps,x}_t,t\geq 0)$ is defined in~\eqref{Yrescale}.
\item[(3)] \emph{Convergence of invariant measures:} Let $Y_{\infty}$
  and $Y^\veps_\infty$ denote random variables distributed
    according to the unique invariant distributions of the dynamics
  given by~\eqref{Y00} and~\eqref{Ysde}, respectively.  The following
  limit holds true
\begin{equation}\label{limitinvariant}
\lim\limits_{\veps\to 0}\dtv\prt{Y_{\infty},Y^\veps_\infty}=0.
\end{equation}
\end{itemize}
\end{proposition}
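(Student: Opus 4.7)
The plan is to address the three claims of Proposition~\ref{P:limitSDE} in turn, relying for (1) and (3) on relatively standard machinery (synchronous coupling plus moment bounds, and explicit density computations) and devoting most of the effort to (2), which is the substantive content.

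For part (1), I would exploit the fact that the drift $b(z) := -C_0|z|^{1+\alpha}\sgn(z)$ is strictly decreasing in $z$. A synchronous coupling of~\eqref{Y00} (same driving Brownian motion, different initial conditions) gives the monotonicity $Y_t(x) \leq Y_t(y)$ whenever $x \leq y$. Combined with uniform-in-initial-condition second moment bounds $\sup_{x \in \bb{R}} \mathbb{E}[Y_t(x)^2] < \infty$ for each $t > 0$, which follow from the super-linear growth $|b(z)| \asymp |z|^{1+\alpha}$ at infinity (the coming-down-from-infinity phenomenon of Section~\ref{cdown}), the monotone limit $Y_t(\pm\infty) := \lim_{x\to \pm\infty} Y_t(x)$ exists almost surely and is finite for every $t > 0$. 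The Markov property of the extended family $(Y(x), x \in \Ree)$ follows from $\pm\infty$ being entrance boundaries: starting from $|x|$ large, the hitting time of any fixed compact set has expectation bounded uniformly in $|x|$, so the Chapman--Kolmogorov relations extend continuously to $\pm\infty$. Path continuity on $\Ree \times [0,\infty)$ is inherited from the Feller property on $\bb{R}$ together with the monotone limit.

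For part (3), I would work with the explicit invariant densities. From~\eqref{Ysde} and the scaling constraints $\veps a_\veps = b_\veps^2$ and $a_\veps b_\veps^{\alpha} = 1$, the invariant density of $Y^\veps_\infty$ is $\rho^\veps_*(y) \propto \exp(-\tfrac{2}{\veps} V(b_\veps y))$, while the invariant density of $Y_\infty$ is $\rho_*(y) \propto \exp(-\tfrac{2C_0}{2+\alpha}|y|^{2+\alpha})$. The change of variables $s = b_\veps r$ in $V(b_\veps y) = \int_0^{b_\veps y} V^{\prime}(s)\,\ud s$, together with Hypothesis~\ref{hyp2}, yields the pointwise limit $\tfrac{1}{\veps} V(b_\veps y) \to \tfrac{C_0}{2+\alpha}|y|^{2+\alpha}$ uniformly on compacts. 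The tail domination from Hypothesis~\ref{hyp4}~\eqref{GC} then produces an $\veps$-uniform integrable minorant for $\tfrac{1}{\veps} V(b_\veps y)$, so that dominated convergence applies both in the numerator and in the normalising constant. Scheff\'e's lemma upgrades the resulting pointwise density convergence to $L^1$-convergence, and hence to $\dtv(Y^\veps_\infty, Y_\infty) \to 0$.

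Part (2) is the main obstacle, because total variation convergence is much stronger than weak convergence and cannot be passed directly through the divergent initial conditions $x b_\veps^{-1}$. The plan is a time-splitting argument: given $t > 0$ and a small $\delta \in (0,t)$, in Step~A I would use the uniform coming-down-from-infinity estimates of Appendix~\ref{apmore} (valid for the limit dynamics, and, after the localization supplied by Lemma~\ref{lem:replacement}, for the pre-limit dynamics as well) to show that the laws of $\mc{X}^{\veps,x}_\delta$ and of $Y_\delta(\sgn(x)\infty)$ are concentrated on a common compact set $K_\delta$ with probability at least $1-\eta(\delta)$, where $\eta(\delta) \to 0$ as $\delta \to 0$, uniformly in $\veps$. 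In Step~B, the Markov property at time $\delta$ reduces the task to estimating $\dtv(\mc{X}^{\veps,x}_t, Y_t(\sgn(x)\infty))$ by comparing the two semigroups acting on the time-$\delta$ laws. This is handled by two complementary mechanisms: (i) a Girsanov estimate on $[\delta, t]$ that turns the uniform convergence of drifts on $K_\delta$ supplied by~\eqref{eq:approximation} into a total variation bound between the transition kernels of~\eqref{Ysde} and~\eqref{Y00}; and (ii) a smoothing argument that upgrades Wasserstein closeness of the time-$\delta$ marginals into total variation closeness of the time-$t$ marginals, exploiting the smooth transition density of~\eqref{Y00} and the contractive nature of its drift. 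Letting first $\veps \to 0$ and then $\delta \to 0$ closes the argument. The main technical challenge is precisely step~B, which requires passing from a weak/Wasserstein control at the intermediate time to a total variation control at the final time under a non-trivial alignment of the two initial conditions.
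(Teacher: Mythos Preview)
Your plans for parts (1) and (3) are essentially the paper's own: monotone synchronous coupling plus uniform second moments for the coming-down-from-infinity extension, and explicit densities plus Scheff\'e's lemma for the invariant measures. No issue there.

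For part (2) your time-splitting architecture also matches the paper's, and your Step~B(i) (a Girsanov/Pinsker-type bound comparing the two semigroups from a \emph{common} initial point in a compact) is exactly what the paper does via the Kabanov--Liptser--Shiryaev inequality in Proposition~\ref{P:fdelta}. The difficulty is your Step~B(ii). You invoke ``Wasserstein closeness of the time-$\delta$ marginals'' and then a smoothing argument, but Step~A only delivers that both $\mc{X}^{\veps,x}_\delta$ and $Y_\delta(\sgn(x)\infty)$ sit in a common compact $[a,b]$ with high probability; it does not make the two time-$\delta$ laws Wasserstein-close, nor does the synchronous coupling make the pair $(\mc{X}^{\veps,x}_\delta,Y_\delta(\sgn(x)\infty))$ close pathwise, since the initial conditions $x b_\veps^{-1}$ and $+\infty$ are not matched. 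So after disintegrating at time $\delta$ you must compare $Y^0_{t-\delta}(u)$ and $Y^\veps_{t-\delta}(v)$ for \emph{arbitrary} $u,v\in[a,b]$, and a generic smoothing/strong-Feller bound of the form $\dtv(Y^0_s(u),Y^0_s(v))\leq C_s|u-v|$ is useless here because $|u-v|$ can be as large as $b-a$ and $s=t-\delta$ is fixed.

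The paper closes this gap by a different, simpler device: it routes through the limit process started at infinity. Because the extension is continuous in total variation at $+\infty$ (Lemma~\ref{Lem:Tvconv}), one can pick $a$ large enough that
\[
\sup_{u\geq a}\dtv\big(Y^0_{t-\delta}(u),Y^0_{t-\delta}(\infty)\big)\leq \eta/4,
\]
and then for any $u,v\in[a,b]$ the triangle inequality gives
\[
\dtv\big(Y^0_{t-\delta}(u),Y^\veps_{t-\delta}(v)\big)
\leq \dtv\big(Y^0_{t-\delta}(u),Y^0_{t-\delta}(\infty)\big)
+\dtv\big(Y^0_{t-\delta}(\infty),Y^0_{t-\delta}(v)\big)
+\dtv\big(Y^0_{t-\delta}(v),Y^\veps_{t-\delta}(v)\big),
\]
with the first two terms controlled by the continuity at infinity and the third by your Step~B(i). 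In other words, the ``alignment of the two initial conditions'' you flag as the main technical challenge is resolved not by smoothing a small Wasserstein gap but by using $Y^0_{t-\delta}(\infty)$ as a common anchor that \emph{every} starting point in $[a,\infty)$ is already TV-close to. This is the missing idea in your outline; once you incorporate it, the rest of your plan goes through.
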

The proof of Proposition~\ref{P:limitSDE} is given in
Subsection~\ref{s:asymptotic}.  To complete the proof
of~\eqref{Gtsil0} we rely on the following proposition.
\begin{proposition}\label{prop:non-trivial-distance}
For all $t >0$ and $x \in \bb{R}$
\begin{equation}\label{0-1} 0<\dtv\prt{Y_t(\sgn(x)\infty),Y_{\infty}}<1.
\end{equation}
\end{proposition}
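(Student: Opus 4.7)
I split the claim into the upper and lower bounds. The upper bound $\dtv(Y_t(\sgn(x)\infty), Y_\infty) < 1$ follows from mutual absolute continuity with respect to Lebesgue measure: the invariant density is proportional to $\exp(-\tfrac{2 C_0}{2+\alpha}|y|^{2+\alpha})$, and for the marginal law of $Y_t(\sgn(x)\infty)$ the Markov decomposition $Y_t(\sgn(x)\infty) = Y_{t/2}(Y_{t/2}(\sgn(x)\infty))$ combined with Proposition~\ref{P:limitSDE}(1) (the inner random variable lies in $\bb{R}$ a.s.) and the smoothness and strict positivity of the transition density of the uniformly elliptic SDE~\eqref{Y00} yield a strictly positive Lebesgue density on $\bb{R}$. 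Mutually absolutely continuous measures cannot be mutually singular, so $\dtv < 1$.

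For $\dtv > 0$ with $x \neq 0$ (take $x > 0$ without loss of generality) I argue by contradiction, assuming $Y_t(+\infty) \sim Y_\infty$. The evenness of $V_0(y) := C_0|y|^{2+\alpha}/(2+\alpha)$ makes~\eqref{Y00} invariant under $y \mapsto -y$ and $\nu$ symmetric, whence $Y_t(-\infty) \stackrel{d}{=} -Y_t(+\infty) \sim Y_\infty$. The synchronous coupling preserves strict order: for finite $r_1 < r_2$ a Gr\"onwall argument on the noise-free difference $Y_t(r_2)-Y_t(r_1)$ yields $Y_t(r_1) < Y_t(r_2)$ a.s.; passing to monotone limits gives $Y_t(-\infty) < Y_t(+\infty)$ strictly a.s. Under the assumption, $\{Y_t(-\infty) > a\} \subset \{Y_t(+\infty) > a\}$ with equal probability $\nu((a,\infty))$, so $\mathbb{P}(Y_t(-\infty) \leq a < Y_t(+\infty)) = 0$ for every $a$, and Fubini--Tonelli then yields
\begin{equation}
\mathbb{E}[Y_t(+\infty) - Y_t(-\infty)] = \int_{\bb{R}} \mathbb{P}\prt{Y_t(-\infty) \leq a < Y_t(+\infty)}\, \ud a = 0,
\end{equation}
contradicting the strict a.s.\ positivity of the difference.

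The case $x = 0$ is the main obstacle: the strict comparisons $Y_t(-r) < Y_t(0) < Y_t(r)$ generate contributions from $r>0$ and $r<0$ that cancel under the $\nu$-symmetry, so the previous coupling argument does not close. My plan is to exploit the $\nu$-reversibility of~\eqref{Y00} and the spectral gap afforded by the coercive symmetric $V_0$: the generator $L$ is self-adjoint on $L^2(\nu)$ with discrete spectrum $0 = \lambda_0 < \lambda_1 \leq \lambda_2 \leq \dots$ and an orthonormal eigenbasis $\{\phi_n\}$, $\phi_0 \equiv 1$, each $\phi_n$ either even or odd. If $Y_{t_0}(0) \sim Y_\infty$ for some $t_0 > 0$, orthogonality and the semigroup identity give, for every $n \geq 1$,
\begin{equation}
0 = \int \phi_n\, \ud\nu = \mathbb{E}[\phi_n(Y_{t_0}(0))] = (e^{t_0 L}\phi_n)(0) = e^{-\lambda_n t_0}\phi_n(0),
\end{equation}
forcing $\phi_n(0)=0$ for all $n \geq 1$, which contradicts the existence of an even non-constant eigenfunction not vanishing at the origin (such $\phi_n$ exist because smooth even functions $f$ with $f(0) \neq 0$ lie in $L^2(\nu)$ and have non-zero projection onto the non-constant even sector). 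Equivalently, one can invoke backward uniqueness for the uniformly parabolic Fokker--Planck equation applied to $v_t := p_t(0,\cdot) - \rho$: the terminal condition $v_{t_0} = 0$ propagates backward to $v_t \equiv 0$ on $(0,t_0]$, incompatible with the weak limit $p_t(0,\cdot) \Rightarrow \delta_0$ as $t \to 0^+$.
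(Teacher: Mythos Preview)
Your upper bound matches the paper's exactly: decompose at time $t/2$, invoke Proposition~\ref{P:limitSDE}(1) so that the inner variable is a.s.\ real, and use full support of the transition density (the paper's Proposition~\ref{Supporttheorem}) together with the explicit invariant density.

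Your lower bound for $x\neq 0$ is correct and genuinely different from the paper's. The paper proceeds uniformly in $x$ by asserting that the evolution map $\mu\mapsto\varphi(\mu,t)$ on absolutely continuous probability measures is \emph{injective} (equivalently, backward uniqueness for the Fokker--Planck equation), observes that $\mu^\infty_\delta\neq\nu$ for small $\delta$, and concludes $\mu^\infty_t=\varphi(\mu^\infty_\delta,t-\delta)\neq\varphi(\nu,t-\delta)=\nu$. Your synchronous-coupling/symmetry argument is more elementary and self-contained: it needs only strict monotonicity in the initial condition (which you correctly get from Gr\"onwall on the noise-free difference, sandwiching $Y_t(-\infty)\leq Y_t(-1)<Y_t(1)\leq Y_t(+\infty)$), the oddness of $F_0$ giving $Y_t(-\infty)\stackrel{d}{=}-Y_t(+\infty)$, the symmetry of $\nu$, and the Tonelli identity $\bb{E}[V-U]=\int_{\bb{R}}\bb{P}(U\leq a<V)\,\ud a$ for $U<V$ a.s. This avoids any appeal to backward uniqueness, which the paper states but does not prove.

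For $x=0$ your spectral route is also different. The conclusion $\phi_n(0)=0$ for all $n\geq 1$ is indeed contradictory, but your parenthetical justification (``nonzero projection onto the non-constant even sector'') does not by itself produce an eigenfunction with $\phi_n(0)\neq 0$. The clean one-line fix is: any even eigenfunction $\phi$ satisfies $\phi'(0)=0$ automatically, so $\phi(0)=0$ together with the second-order ODE $L\phi=-\lambda\phi$ forces $\phi\equiv 0$; hence every even eigenfunction (in particular the first non-constant one) is nonzero at $0$. Your alternative via backward uniqueness for the Fokker--Planck equation is exactly the paper's injectivity argument specialized to $\sgn(0)\infty=0$.
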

The proof of Proposition \ref{prop:non-trivial-distance} is given in
Subsection~\ref{subsect01bound}.

Now, we are ready to prove Theorem~\ref{T:silhouette}, which is a
consequence of what we have already stated up to here.
\begin{proof}[Proof of Theorem~\ref{T:silhouette}]
  Inequality~\eqref{key} with the help
  of~\eqref{limitSDE}, \eqref{limitinvariant} implies
  that
\[
\lim\limits_{\veps\to 0}d^{\veps}_{a_{\veps} t}(x)=
\dtv\prt{Y_t(\sgn(x)\infty),Y_{\infty}}.
\]
In addition, \eqref{0-1} implies
\eqref{Gtsil0}.  The proof of
\eqref{eq:mapa_decrescente} is given in Lemma~\ref{lem:contdecre} in
Appendix~\ref{sec:complements}.
\end{proof}

\section{\textbf{Proofs: details}}\label{proofs}
In this section, we give the proof of Proposition~\ref{idea} and
complete the proofs of the statements in Section~\ref{outline}.  To be
more precise, in Subsection~\ref{modes} we prove
Proposition~\ref{idea}, the proof of Lemma~\ref{lem:replacement} is
given in Subsection~\ref{sub:replacement}, the proof of
Proposition~\ref{P:limitSDE} is given in Subsection~\ref{s:asymptotic}
and the proof of Proposition~\ref{prop:non-trivial-distance} is given
in Subsection~\ref{subsect01bound}.

\subsection{\textbf{Proof of Proposition~\ref{idea}}}\label{modes}

To prove that there is no cut-off phenomenon, by
Definition~\ref{def:cutoff}, we need to show that for any scale
$(t_\veps, \veps\in [0,1])$ with $\lim_{\veps \to 0} t_\veps = \infty$
the condition \eqref{eq-def:cutoff} does not hold.  Let
$(t_\veps,\veps\in(0,1])$ be such that
$\lim_{\veps \to 0 }t_\veps = \infty$.  First, we assume that
\begin{equation}\label{eq:infinito}
\limsup_{\veps \to 0}\frac{t_\veps}{
a_\veps}<\infty,
\end{equation}
that is, there are constants $C_1>0$
and $\veps_0\in (0,1]$ such that
$t_\veps\leq C_1 a_\veps$ for any $\veps\in (0,\veps_0]$.
By~\eqref{eq:monotona}, for any $\delta>0$ and  $\veps\in (0,\veps_0]$ we have
$d^{\veps}_{\delta  C_1 a_\veps}(x)\leq d^{\veps}_{\delta t_\veps}(x)$.
Therefore, by \eqref{eq:away_from_0-1}, \text{for $\delta>1$}  we have
\[
0<\liminf\limits_{\veps\rightarrow 0} d^{\veps}_{\delta
  C_1 a_\veps}(x) \leq \liminf\limits_{\veps\rightarrow 0} d^{\veps}_{\delta
   t_\veps}(x).
\]
Hence, condition~\eqref{eq-def:cutoff} fails at the scale $(t_\veps,\veps\in (0,1])$
for the family $(X^{\veps}(x),\veps\in (0,1])$.

If~\eqref{eq:infinito} fails, then there exists a sequence
$(\veps_k,k\in \mathbb{N})$ such that $\veps_k\to 0$ as $k\to \infty$
and
\begin{equation}
\limsup_{k \to \infty}\frac{t_{\veps_k}}{
a_{\veps_k}}=\infty.
\end{equation}
In particular, there exists $k_0\in \mathbb{N}$ such that 
${a}_{\veps_k}\leq t_{\veps_k}$ for all $k\geq k_0$.
By~\eqref{eq:monotona} and~\eqref{eq:away_from_0-1}, for $0<\delta<1$ we have
\[
 \limsup\limits_{k\rightarrow \infty} d^{\veps_k}_{\delta t_{\veps_k}}(x)\leq  
\limsup\limits_{k\rightarrow \infty} d^{\veps_k}_{\delta {a}_{\veps_k}}(x) \leq  
\limsup\limits_{\veps\rightarrow 0} d^{\veps}_{\delta {a}_{\veps}}(x)<1.
\]
Hence, by~\eqref{def:cutoff} there is no cut-off at the scale
$(t_\veps,\veps\in (0,1])$ for the family
$(X^{\veps}(x),\veps\in (0,1])$.  Since
$(t_\veps,\veps\in (0,1])$ is any function with
$\lim_{\veps \to 0}t_\veps = \infty$, it follows that there is no
cut-off phenomenon for the family $(X^{\veps}(x),\veps\in (0,1])$.

We now prove \eqref{tmix-rescaled}.
By assumption \eqref{eq:lim_exists} for any $t>0$ we have
\begin{equation}
  \label{Gdef}
  \lim_{\veps\rightarrow 0}d^{\veps}_{a_{\veps}t}(x) = G_x(t) \in (0,1).    
\end{equation}
Moreover, the map $t\mapsto G_x(t)$ is continuous and strictly decreasing.
Now, for each $\eta\in (0,1)$ we define
$H_x(\eta):=\inf\{t\geq 0: G_x(t)\leq \eta\}$. To prove
\eqref{tmix-rescaled} we show that
  \begin{align}
    \label{upper-lower1}
    &\limsup\limits_{\veps \to
      0}\frac{\tau^{\veps,x}_{\textsf{mix}}(\eta)}{a_\veps} \leq
      H_x(\eta) \quad \textrm{ and }\\
    \label{upper-lower2}
    &\liminf\limits_{\veps \to
      0}\frac{\tau^{\veps,x}_{\textsf{mix}}(\eta)}{a_\veps} \geq H_x(\eta).
\end{align}
To prove \eqref{upper-lower1}, let $\gamma^*\in (0,\eta)$ be fixed and
let $t^*:=t^*(\eta-\gamma^*,x)>0$ be such that
$G_x(t^*)=\eta-\gamma^*$, and $G_x(t) >\eta-\gamma^*$ for all $t<t^*$.
By~\eqref{Gdef} there is $\veps^*:=\veps^*(\eta,\gamma^*,x)>0$ such
that
  \begin{equation}
    \label{explain_bound}
-\gamma^*<\dtv\prt{X^{\veps}_{t^*\,a_\veps}(x),
  \mu^\veps}-G_x(t^*)<\gamma^*
\quad \textrm{ for all } \quad \veps\in (0,\veps^*),
  \end{equation}
which implies that $\dtv\prt{X^{\veps}_{t^*\, a_\veps}(x),
  \mu^\veps}<\eta$  for all $\veps\in (0,\veps^*)$.
Therefore, 
$\tau^{\veps,x}_{\textsf{mix}}(\eta)\leq t^*\, a_\veps$ for all $\veps\in (0,\veps^*)$, which yields 
\begin{equation}
\limsup\limits_{\veps\to 0}\frac{\tau^{\veps,x}_{\textsf{mix}}(\eta)}{a_\veps}\leq t^*=t^*(\eta-\gamma^*,x).
\end{equation}
Since $\gamma^* \in (0,\eta)$ is arbitrary and $t \mapsto G_x(t)$ is continuous
and strictly decreasing we obtain that 
\begin{equation}\label{ec:sup}
\limsup\limits_{\veps\to
  0}\frac{\tau^{\veps,x}_{\textsf{mix}}(\eta)}{a_\veps}\leq
\lim_{\gamma^* \to 0} t^*(\eta- \gamma^*,x)=H_x(\eta).
\end{equation}

To prove \eqref{upper-lower2}, let $\gamma_*\in (0,1-\eta)$ be fixed and
let $t_*=t_*(\eta+\gamma_*,x)>0$ be such that $G_x(t_*)=\eta+\gamma_*$
and $G_x(t_*)>\eta+\gamma_*$ for all $t<t_*$.
By~\eqref{Gdef} there is
$\veps_*:=\veps_*(\eta,\gamma_*,x)>0$ such that
\[
  \dtv\prt{X^{\veps}_{t_*\, a_\veps}(x),
  \mu^\veps}>\eta \quad \textrm{ for all } \quad \veps\in (0,\veps_*).
\]
Therefore, 
$\tau^{\veps,x}_{\textsf{mix}}(\eta)\geq  t_*\, a_\veps$ for all $\veps\in (0,\veps_*)$, which implies 
\begin{equation}
\liminf\limits_{\veps\to 0}\frac{\tau^{\veps,x}_{\textsf{mix}}(\eta)}{a_\veps}\geq  t_*=t_*(\eta+\gamma_*,x).
\end{equation}
Therefore
\begin{equation}\label{ec:inf}
\liminf\limits_{\veps\to
  0}\frac{\tau^{\veps,x}_{\textsf{mix}}(\eta)}{a_\veps}\geq
\lim_{\gamma^* \to 0 }t_*(\eta + \gamma_*,x)=H_x(\eta).
\end{equation}
This completes the proof of \eqref{tmix-rescaled}.  \qed

\subsection{\textbf{Proof of Lemma~\ref{lem:replacement}}}\label{sub:replacement}
By the triangle inequality for the total variation distance we have that
\begin{equation}\label{eq:jo1}
d^{\veps}_{a_{\veps} t}(x)\leq 
\dtv(X^\veps_{a_{\veps}t}(x),\widetilde{X}^\veps_{a_{\veps}t}(x))+
\widetilde{d}^{\veps}_{a_{\veps} t}(x)+\dtv(\widetilde{\mu}^\veps,\mu^\veps).
\end{equation}
Similarly, 
\begin{equation}\label{eq:jo2}
\widetilde{d}^{\veps}_{a_{\veps} t}(x)\leq 
\dtv(\widetilde{X}^\veps_{a_{\veps}t}(x),X^\veps_{a_{\veps}t}(x))+
d^{\veps}_{a_{\veps} t}(x)+\dtv(\mu^\veps,\widetilde{\mu}^\veps).
\end{equation}
By~\eqref{eq:jo1} and~\eqref{eq:jo2} we obtain 
\begin{equation}\label{eq:jo}
\left|d^{\veps}_{a_{\veps} t}(x)- \widetilde{d}^{\veps}_{a_{\veps} t}(x)\right|\leq 
\dtv(X^\veps_{a_{\veps}t}(x),\widetilde{X}^\veps_{a_{\veps}t}(x))+\dtv(\mu^\veps,\widetilde{\mu}^\veps)\quad \textrm{ for all }\quad t\geq 0. 
\end{equation}
By Lemma~\ref{lem:couplingX} below and Equation~\eqref{eq:particular} from
Lemma~\ref{lem:couplingINV} in Appendix~\ref{app:convergence-invariant-measure}, we deduce
that the right-hand side of~\eqref{eq:jo} tends to zero as
$\veps \to 0$ and thereby conclude the proof of
Lemma~\ref{lem:replacement}.\qed

\medskip

\begin{lemma}[Convergence of the drift-modified process close to the origin]\label{lem:couplingX}
For any $x\in \mathbb{R}$ and $t\geq 0$ the following limit holds
\[
\lim\limits_{\veps\to 0}\dtv(X^\veps_{a_{\veps}t}(x),\widetilde{X}^\veps_{a_{\veps}t}(x))=0,
\]
where $(a_\veps,\veps\in [0,1))$ is defined in~\eqref{eq:paraeps}.
\end{lemma}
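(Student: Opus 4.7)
The plan is to use a synchronous coupling: drive both~\eqref{modelo} and~\eqref{modelote} by the same Brownian motion $B$ starting from the common initial condition $x$. Because $\widetilde{V}'\equiv V'$ on $[-L,L]$ and $|x|<L$ (thanks to $L^2\geq 1+|x|^2$), pathwise uniqueness forces $X^\veps_s(x)=\widetilde{X}^\veps_s(x)$ for every $s<\tau_L$, where
\[
\tau_L := \inf\{s\geq 0 : |X^\veps_s(x)|\geq L\}.
\]
The elementary coupling bound $\dtv(U,W)\leq \mathbb{P}(U\neq W)$ then yields $\dtv\bigl(X^\veps_{a_\veps t}(x),\widetilde{X}^\veps_{a_\veps t}(x)\bigr)\leq \mathbb{P}(\tau_L\leq a_\veps t)$, so it suffices to establish that $\mathbb{P}(\tau_L\leq a_\veps t)$ tends to zero as $\veps\to 0$.

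To bound this exit probability I would use $V$ itself as a Lyapunov function. It\^o's formula applied to $V(X^\veps_{s\wedge\tau_L})$ produces the non-positive dissipation $-\int_0^{s\wedge\tau_L}V'(X^\veps_r)^2\,\ud r$, an It\^o correction bounded by $\tfrac{\veps}{2}C_L(s\wedge \tau_L)$ with $C_L:=\sup_{|z|\leq L}V''(z)<\infty$, and a continuous martingale $M_s:=\int_0^{s\wedge\tau_L}V'(X^\veps_r)\,\ud B_r$. Because $V$ is convex and even, $|V'|\leq V'(L)$ on $[-L,L]$, so $\langle M\rangle_s\leq V'(L)^2(s\wedge\tau_L)$. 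On $\{\tau_L\leq T\}$ with $T:=a_\veps t$, the identity $V(X^\veps_{\tau_L})=V(L)$ combined with these bounds gives
\[
V(L)-V(x)\leq \sqrt{\veps}\,\sup_{s\leq T}|M_s|+\tfrac{\veps}{2}C_L T.
\]
The decisive quantitative observation is the scaling identity $\veps T=b_\veps^2 t=\veps^{2/(2+\alpha)}t\to 0$ coming from~\eqref{abscales}, so for $\veps$ small enough the It\^o correction is absorbed, forcing $\sup_{s\leq T}|M_s|\geq (V(L)-V(x))/(2\sqrt{\veps})$.

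Finally, by the Dambis--Dubins--Schwarz representation $\sup_{s\leq T}|M_s|$ is stochastically dominated by $\sup_{u\leq V'(L)^2 T}|B^0_u|$ for a standard Brownian motion $B^0$, so the reflection principle supplies the Gaussian tail estimate
\[
\mathbb{P}(\tau_L\leq T)\leq 2\exp\!\left(-\frac{(V(L)-V(x))^2}{8\,V'(L)^2\,\veps T}\right),
\]
which tends to zero because $\veps T\to 0$ while $V(L)>V(x)\geq 0$ is fixed; strict monotonicity of $V$ on $[0,\infty)$ follows from the Hypothesis~\ref{hyp2} behaviour $V'(z)\sim C_0|z|^{1+\alpha}\sgn(z)$ near the origin together with convexity. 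The main obstacle in this plan is the tension between the long horizon $a_\veps t\to\infty$ and the vanishing noise $\sqrt{\veps}\to 0$: a polynomial Chebyshev bound on $\mathbb{E}[M_T^2]$ would only furnish a rate of order $\veps T^2=\veps^{(2-\alpha)/(2+\alpha)}t^2$, which fails to vanish when $\alpha\geq 2$. The exponential bound is therefore essential, and its success relies precisely on $\veps a_\veps=b_\veps^2\to 0$ built into the scale~\eqref{abscales}.
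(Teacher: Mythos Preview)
Your argument is correct and complete. The synchronous coupling reduces the total variation to an exit probability, and your Lyapunov estimate via $V$ together with the Dambis--Dubins--Schwarz bound on the stopped martingale $M_s=\int_0^{s\wedge\tau_L}V'(X^\veps_r)\,\ud B_r$ correctly yields an exponential decay governed by $\veps a_\veps t\to 0$.

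The paper's proof follows the same overall scheme but makes two different choices. First, it applies It\^o to $|\widetilde X^\veps|^2$ rather than to $V$, exploiting $z\widetilde V'(z)\ge 0$ to get $|\widetilde X^\veps_s|^2\le |x|^2+\veps s+\widetilde M^\veps_s$ with $\widetilde M^\veps_s=2\sqrt{\veps}\int_0^s\widetilde X^\veps_r\,\ud B_r$. Second, it controls the supremum of $\widetilde M^\veps$ by Doob's $L^2$ inequality and It\^o's isometry, arriving at the polynomial bound $16|x|^2\veps a_\veps t+8(\veps a_\veps)^2t^2$, both terms of which vanish since $\veps a_\veps=\veps^{2/(2+\alpha)}\to 0$.

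Your closing remark that a Chebyshev bound ``would only furnish a rate of order $\veps T^2=\veps^{(2-\alpha)/(2+\alpha)}t^2$'' is a miscalculation: your own martingale has $\langle M\rangle_T\le V'(L)^2T$, so Doob plus Markov gives
\[
\mathbb{P}\Bigl(\sup_{s\le T}|M_s|\ge \tfrac{V(L)-V(x)}{2\sqrt{\veps}}\Bigr)\le \frac{16\,\veps\, V'(L)^2\,T}{(V(L)-V(x))^2}=O(\veps a_\veps t),
\]
which tends to zero for every $\alpha>0$. So the exponential DDS bound, while sharper, is not essential; the paper's second-moment route is enough. Either Lyapunov function ($V$ or $|\cdot|^2$) combined with either tail estimate (Gaussian or Chebyshev) suffices, and all four combinations hinge on the single scaling fact $\veps a_\veps\to 0$.
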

\begin{proof}
The proof follows the steps given in the proof of Proposition~4.1, item (ii), 
of~\cite{BJ}.
Recall the definition of $\widetilde{V}$ given in~\eqref{Vcoup}. In
particular, note that $L=L_x$ is chosen such that $L^2\geq |x|^2+1$.
Let $\veps\in (0,1]$ be fixed. The variational formulation of the total variation distance  yields 
$\dtv(X^\veps_{a_{\veps}t}(x),\widetilde{X}^\veps_{a_{\veps}t}(x))\leq
\mathbb{Q}(X^\veps_{a_{\veps}t}(x)\neq \widetilde{X}^\veps_{a_{\veps}t}(x))$
for any coupling $\mathbb{Q}$ of the random variables $X^\veps_{a_{\veps}t}(x)$ and $\widetilde{X}^\veps_{a_{\veps}t}(x)$.
Moreover, 
as $|x|<L$
for the synchronous coupling  $\mathbb{P}$ (where processes are driven by the same noise),  we have
$\widetilde{X}^\veps_s(x)=X^\veps_s(x)$ for $0\leq s<\widetilde{\tau}^\veps(x)$, 
where $\widetilde{\tau}^\veps(x):=\inf\{s\geq 0: |\widetilde{X}^\veps_s(x)|>L\}$. 
Therefore,
\begin{equation}\label{eq:cotacouplingsup}
\dtv(X^\veps_{a_{\veps}t}(x),\widetilde{X}^\veps_{a_{\veps}t}(x))
\leq 
 \mathbb{P}(\widetilde{\tau}^{\veps}(x)\leq a_{\veps}t) \quad \textrm{ for any }\quad t\geq 0.
\end{equation}
Note that 
\begin{equation}\label{eq:cotacoupligsup1}
\mathbb{P}\left(\widetilde{\tau}^\veps(x)\geq a_{\veps} t \right)=\mathbb{P}\left(\sup\limits_{0\leq s\leq a_\veps t}|\widetilde{X}^\veps_s(x)|\leq L\right).
\end{equation}
Since $\widetilde{V}$ is a smooth, convex, and even function,
It\^o's formula yields $\mathbb{P}$-almost surely that
\begin{equation}\label{eq:martrick}
\begin{split}
|\widetilde{X}^\veps_t(x)|^2&=
|x|^2-2\int_{0}^{t} \widetilde{X}^\veps_s(x) \widetilde{V}^{\prime}(\widetilde{X}^\veps_s(x)) \ud s+\veps t+
\widetilde{M}^\veps_t(x)
\\
&\leq |x|^2+\veps t+\widetilde{M}^\veps_t(x)\quad \textrm{ for all }\quad t\geq 0,
\end{split}
\end{equation}
where $\widetilde{M}^\veps_t(x):=2\sqrt{\veps}\int_{0}^{t}\widetilde{X}^\veps_s(x)\ud B_s$, $t\geq 0$. By a localization procedure, it follows that
\begin{equation}\label{eq:aptrick}
\mathbb{E}[|\widetilde{X}^\veps_t(x)|^2]\leq |x|^2+\veps t\quad \textrm{ for all }\quad t\geq 0
\end{equation}
and hence
 $(\widetilde{M}^\veps_t(x),t\geq 0)$ is a true martingale.
By~\eqref{abscales} we have 
$\veps a_\veps=\veps^{\frac{2}{2+\alpha}}$, which tends to zero as $\veps \to 0$. Then for any $t>0$ fixed there exists $\veps_0=\veps_0(t,\alpha)>0$ such that $1-\veps a_{\veps}t>1/2 $ for all $\veps\in (0, \veps_0)$.
By~\eqref{eq:martrick} for any $\veps\in (0, \veps_0)$ we have
\begin{equation}\label{eq:nuevesita}
\begin{split}
\mathbb{P}\left(\sup\limits_{0\leq s\leq a_\veps t}|\widetilde{X}^\veps_s(x)|\geq  L\right)&=\mathbb{P}\left(\sup\limits_{0\leq s\leq a_\veps t}|\widetilde{X}^\veps_s(x)|^2\geq  L^2\right)\\
&\leq \mathbb{P}\left(\sup\limits_{0\leq s\leq a_\veps t}|\widetilde{M}^\veps_s(x)|\geq  L^2-|x|^2-\veps a_{\veps} t\right)\\
&\leq \mathbb{P}\left(\sup\limits_{0\leq s\leq a_\veps t}|\widetilde{M}^\veps_s(x)|\geq  1/2\right)\\
&=\mathbb{P}\left(\sup\limits_{0\leq s\leq a_\veps t}|\widetilde{M}^\veps_s(x)|^2\geq  1/4\right),
\end{split}
\end{equation}
where for the last inequality  we used that $|x|^2+1< L^2$ and $1-\veps a_{\veps}t>1/2$.
Now, by Doob's submartingale inequality, It\^o's isometry
and~\eqref{eq:aptrick} we have for all  $\veps\in (0, \veps_0)$
\begin{equation}\label{eq:nuevesita76}
\begin{split}
\mathbb{P}\left(\sup\limits_{0\leq s\leq a_\veps t}|\widetilde{X}^\veps_s(x)|\geq  L\right)
&\leq \mathbb{P}\left(\sup\limits_{0\leq s\leq a_\veps t}|\widetilde{M}^\veps_s(x)|^2\geq  1/4\right)\leq 4{\mathbb{E}[|\widetilde{M}^\veps_{a_{\veps}t}(x)|^2]}\\
&=
{16\veps\int_{0}^{a_\veps t}\mathbb{E}[|\widetilde{X}^\veps_s(x)|^2]\ud s }\leq 16|x|^2\veps a_\veps t+8\veps^2a^2_\veps t^2.
\end{split}
\end{equation}
By~\eqref{eq:cotacouplingsup},~\eqref{eq:cotacoupligsup1} 
and~\eqref{eq:nuevesita76}
we deduce
\begin{equation}
\dtv(X^\veps_{a_{\veps}t}(x),\widetilde{X}^\veps_{a_{\veps}t}(x))\leq  16|x|^2\veps a_\veps t+8\veps^2a^2_\veps t^2\quad \textrm{ for any }\quad t\geq 0,
\end{equation}
which implies the statement as $\veps \to 0$.
\end{proof}

\subsection{\textbf{Proof of Proposition~\ref{P:limitSDE}}}
\label{s:asymptotic}
The proof of Proposition~\ref{P:limitSDE} is divided in three parts,
one for each claim of the proposition. To ease the exposition, we only
give here the main steps of the
proofs and postpone the technical details to the Appendix.
\subsubsection{\textbf{Continuous Markovian extension}}
\label{cme}
The continuous Markovian extension of the SDE~\eqref{Y00} is done in
three steps.  Their proofs are given in detail in
Appendix~\ref{Zproof} and here we only outline the main steps.  First,
based on a monotonic coupling and uniform moment bounds for
$x \in \bb{R}$, SDE~\eqref{Y00} can be extended to
$\overline{\bb{R}}$, see Section~\ref{cdown}.  Second, because
$\pm \infty$ are exit boundaries for the dynamics in $\bb{R}$, the
extended family $(Y(x), x \in \overline{\bb{R}})$ is Markovian, see
Section~\ref{mprop}. Finally, in Section~\ref{cprop} we show that the
extension is continuous in the sense that
\begin{equation}\label{catinfty}
\lim_{x \to \pm\infty} \dtv\prt{Y_t(x) ,\yyzz{t}{\pm \infty}} = 0.
\end{equation}

\subsubsection{\textbf{Convergence for fixed marginal}}
\label{s:tvart}
In this section we show the limit~\eqref{limitSDE}.  For simplicity, we
consider only the case when the initial condition $x$
in~\eqref{modelo} is positive, the case when $x$ is negative can be
treated by an analogous argument, while the case $x=0$ is easier
as no scaling of the initial condition is required and~\eqref{limitSDE}  follows from the uniform convergence of the velocity fields, see~\eqref{fieldcv} below.  To ease
notation and clarify the limit procedures, we denote by $F_0, F_\veps$
the velocity fields of~\eqref{Y00} and~\eqref{Ysde}, respectively.  That
is, for any $\veps\in [0,1]$ and $z \in \bb{R}$ we define
\begin{equation}\label{fieldnota}
  F_0 (z):=- C_0\abs{z}^{1 + \alpha} \sgn(z)
 \qquad  \text{and} \qquad
  F_\veps (z):=- \frac{a_\veps}{b_\veps}V^{\prime}(b_\veps z) = - \frac{V^{\prime}(b_\veps z)}{b^{1 + \alpha}_\veps}.
\end{equation}
To ease notation, denote by $Y^0(x)$ the solution of~\eqref{Y00}. With this, 
$(Y^{\veps}(x), \veps \in [0,1])$ solves
\begin{equation}\label{gensde}
\left\{  \begin{array}{r@{\;=\;}l}
   \ud Y_t & F_\veps(Y_t) \ud t + \ud B_t \quad\textrm{ for } \quad t\geq  0, \\
    Y_0& x.
  \end{array}\right.
\end{equation}
In what follows, we consider uniform bounds for $Y^{\veps}(x)$ with
$\veps\in [0,1]$ and we will take the limit of such processes as
$\veps \to 0$.  First, since $b_\veps \to 0$ as $\veps \to 0$, Hypothesis~\ref{hyp2} yields for all $K >0$
\begin{equation}\label{fieldcv}
\lim_{\veps \to 0} \sup_{\abs{z} \leq K}\abs{F_\veps(z) - F_0(z)} =0.
\end{equation}
Also, by Proposition~\ref{Pcdfi} it follows that, almost surely, for
any $\veps \in [0,1]$, the limit
\[
Y^{\veps}_t(\infty) := \lim_{x \to \infty} Y^{\veps}_t(x)
\] exists and
is finite for $t>0$.  Next, by Lemma~\ref{Lem:Tvconv} for all $t >0$ and
$\veps \in [0,1]$,
\begin{equation}\label{continft}
  \lim_{x \to \infty}
  \dtv\prt{Y^{\veps}_t(x) , Y^{\veps}_t(\infty)} =0.
\end{equation}
Now, we fix $\eta>0$. By the uniform behavior at infinity, see
Proposition~\ref{P:compactin}, it follows that for any $a>0$, there
are $b>0$ and $\delta\in (0,t)$ such that
\begin{equation}\label{compactin}
  \sup_{\veps \in  [0,1]}
  \bb{P}\prt{\yyyz{\veps}{\delta}{\infty} \notin [a,b]}\leq   \eta/8.
\end{equation}
By~\eqref{continft}, we may choose $a>0$ large enough so that
\begin{equation}\label{continft0}
  \sup_{x\geq a}
  \dtv\big(Y^{0}_t(x) , \yyy{0}{t}{x}\big) \leq  \eta/4.
\end{equation}
Now, given $a,b$ and $\delta \in (0,t)$ we claim that there is
$\veps_0 = \veps(\eta)>0$ for which 
\begin{equation}\label{fdelta}
 \sup_{0\leq \veps\leq  \veps_0}\sup_{x \in [a,b]}\dtv\big(Y^0_{t-\delta}(x), Y^{\veps}_{t-\delta}(x)\big)\leq  \eta/4.
\end{equation}
The proof of~\eqref{fdelta} is given in Appendix~\ref{p:fdelta}.  

Now,
let $x_\veps := xb_\veps^{-1}$ and define $\mu_\delta^{\veps}$ to be
the synchronous coupling (both SDEs are driven with the same noise) of $\yyy{0}{\delta}{1}$ and
$Y^{\veps}_{\delta}(x_\veps)$. We write
$\mu_\delta^{\veps}(A,B):= \bb{P}(\yyy{0}{\delta}{1}\in A ,
Y^{\veps}_{\delta}(x_\veps)\in B)$ for any $A,B$ Borelian subsets of
$\bb{R}$. We may choose $a>0$ for which~\eqref{continft0} holds, then
we choose $b>a$ and $\delta\in (0,t)$ such that~\eqref{compactin}
and~\eqref{fdelta} also hold true.  With these choices, it follows
that for any $\veps \in [0,1]$
\begin{equation}\label{massout}
\mu^\veps_\delta(\bb{R}^2 \setminus [a,b]^2)
\leq \bb{P}\big(\yyyz{0}{\delta}{\infty} \notin [a,b]\big)
+\bb{P}\big(\yyyz{\veps}{\delta}{\infty} \notin [a,b]\big)\leq\eta/4. 
\end{equation}
The disintegration
inequality, see~Proposition~\ref{L:disintegration}, and the triangle
inequality for the total variation distance imply that
for each $x>0$ and $t>0$ there is $\veps_0>0$ such that for
$\veps \in [0, \veps_0]$ we choose $a>0$, $b>a$ and $\delta\in (0,t)$ for which~\eqref{compactin},~\eqref{continft0}, and~\eqref{fdelta} hold true and therefore
\begin{equation}\label{conclusion}
\begin{aligned}
\dtv\prt{Y^0_{t}(\infty), Y^{\veps}_t(x_\veps) }
 & \leq  \int_{\bb{R}^2} \dtv\prt{Y^0_{t-\delta}(x) , Y^{\veps}_{t-\delta}(y)}\mu_\delta^{\veps}(\ud x,\ud y)\\
  &  \leq  \mu^\veps_\delta(\bb{R}^2 \setminus [a,b]^2) +\int_{[a,b]^2} \dtv\prt{Y^0_{t-\delta}(x) , Y^\veps_{t-\delta}(y)}  \mu_\delta^{\veps}(\ud x,\ud y)\\ 
  &  \leq\qquad \quad \eta/4 \qquad+\int_{[a,b]^2} \dtv\prt{Y^0_{t-\delta}(x) , Y^0_{t-\delta}(\infty)} \mu_\delta^{\veps}(\ud x,\ud y)\\
    &\qquad \quad + \int_{[a,b]^2} \dtv\prt{Y^0_{t-\delta}(\infty) , Y^0_{t-\delta}(y)} \mu_\delta^{\veps}(\ud x,\ud y)\\
    &\qquad \quad+\int_{[a,b]^2} \dtv\prt{Y^0_{t-\delta}(y) , Y^{\veps}_{t-\delta}(y)}  \mu_\delta^{\veps}(\ud x,\ud y)\\
  & \leq \eta/4 +\eta/4 +\eta/4 +\eta/4  = \eta.
\end{aligned}
\end{equation}
Recall~\eqref{Yrescale} and observe that
$\dtv\prt{\yyzz{t}{\sgn(x)\infty},\mathcal{X}^{\veps,x}_t}
=\dtv\prt{Y^0_{t}(\infty), Y^{\veps}_t(x_\veps) }$.  Since $\eta>0$ is
arbitrary, the proof of~\eqref{limitSDE} is complete.
\qed

\subsubsection{\textbf{Convergence of invariant measures}}\label{item-3-proof-2.5}
Recall the notation introduced above \eqref{tv_scale_inv}, that is,
$Y_\infty\stackrel{d}= \nu$
and $\widetilde{X}^\veps_\infty \stackrel{d}=\widetilde{\mu}^\veps$,
where $\stackrel{d}=$ denotes equality in the distribution sense. By
Lemma~\ref{medidainvariante} it follows that
\begin{equation}\label{densityY}
\nu(\ud z)=C^{-1}\exp\big(-2 V_0(z)\big)\ud z,
\end{equation}
where $C$ is a normalization constant, $V_0(z):=(2 +
\alpha)^{-1}C_0|z|^{2+\alpha}$ with $\alpha$ and $C_0$ defined in
Hypothesis~\ref{hyp2}. 
Similarly,
$\widetilde{\mu}^\veps$ is the density of 
$\widetilde{X}^{\veps}_\infty$ and it is
given by
\begin{equation}
\widetilde{\mu}^\veps(\ud z)={\widetilde{C}_{\veps}^{-1}}\exp\left(-2\frac{\widetilde{V}(z)}{\veps}\right)\ud z.
\end{equation} 
By the change of variable theorem, with $(b_\veps,\veps\in [0,1))$ as
defined in~\eqref{abscales}, the density of
 $Y^\veps_\infty=\frac{\widetilde{X}^{\veps}_\infty}{b_{\veps}}$ is given by 
\begin{equation}\label{eq:cambio}
b_{\veps}{\widetilde{C}_{\veps}^{-1}}\exp\left(-2\frac{\widetilde{V}(b_{\veps}z)}{\veps}\right)\ud z.
\end{equation} 
By~\eqref{densityY},~\eqref{eq:cambio}, and Scheff\'e's lemma~(\cite[Lemma~3.3.2, p.95]{Reiss}), 
to conclude the proof of~\eqref{limitinvariant},
it suffices to show
\begin{equation}\label{densityc23new}
  \lim\limits_{\veps\to 0}\frac{b_{\veps}}{\widetilde{C}_{\veps}}
  e^{-2\frac{\widetilde{V}(b_{\veps}z)}{\veps}}=\frac{1}{C}{e^{-2 V_0(z)}}\quad
  \textrm{ for any }\quad z\in \mathbb{R}.
\end{equation}
The proof of~\eqref{densityc23new} is given in Lemma~\ref{claim1in} and
Lemma~\ref{lem:constantes} in Appendix \ref{app:convergence-invariant-measure}.

\subsection{\textbf{Strict inequalities for the rescaled process}}\label{subsect01bound}
In this section we show 
\[0<\dtv\prt{Y_t(\sgn(x)\infty),Y_{\infty}}<1\quad \textrm{ for any }\quad  t>0.
\]
First, we prove the upper bound
and then we show the lower bound.

\noindent\textbf{The upper bound.}
We first note that for any $t>0$, $x \in \bb{R}$, the marginal
$Y_t(x)$ has full support in $\bb{R}$, see
Proposition~\ref{Supporttheorem} in Appendix~\ref{sec:complements} for
a proof.  By Proposition~\ref{P:limitSDE} the family
$(Y(x), x \in \overline{\bb{R}})$ is Markovian, and hence, by
semigroup property, $Y_t(\infty)$ is equal in law to
$Y_{t/2}(Y_{t/2}(\infty))$ for any $t>0$.  Since
$\bb{P}(Y_{t/2}(\infty) \in \bb{R}) = 1$ it follows by our previous
discussion that $Y_t(\infty)$  with law $\nu_t$ possesses a
continuous density $\rho_t:\bb{R} \to (0, \infty)$, that is
  $\nu_t(\ud x) =\rho_t(x) \ud x$.  Furthermore, the invariant
distribution of $Y$ corresponding to the random variable $Y_\infty$ has explicit density
function $\rho:\bb{R} \to (0, \infty)$, which is given
in~\eqref{densityY}.  To conclude that $\dtv (Y_t(\infty),Y_\infty)<1$
we note that
\begin{equation}\label{eq:menorqueuno}
  \dtv(Y_t(\infty),Y_\infty)  = 1 -\int_{\bb{R}}  \min \{\rho_t(z), \rho(z) \}\, \ud z < 1.
\end{equation}

\noindent\textbf{The lower bound: injective evolution map.}
To prove the lower bound, we first define the evolution map on the
space of measures. Let $\mc{P}$ be the space of probability measures
on $\bb{R}$ that are absolutely continuous with respect to the
Lebesgue measure on $\bb{R}$
and let $C_b(\bb{R})$ be the set of bounded
continuous functions on $\bb{R}$.
For each $\mu \in  \mc{P}$ and $t \geq 0$ let $\varphi = \varphi(\mu,t)$ be the measure  such that for every $f\in C_b({\bb{R}})$
\begin{equation}
  \label{evolution-map}
 \int f(x) \ud \varphi(x) := \int
 \bb{E}[f(Y_t(x))] \ud \mu(x). 
\end{equation}
By Proposition~\ref{Supporttheorem} we have that $\varphi(\mu,t)\in \mc{P}$ for all $\mu \in \mc{P}$ and $t>0$.
For fixed time $t>0$, the evolution map is injective in the sense that
\begin{equation}
  \label{injective}
 \text{if }\quad  \mu, \mu' \in \mc{P}\quad\text{ and }\quad\mu\neq \mu'\quad\text{ then for all }\quad t \geq 0\quad
 \varphi(\mu,t)\neq \varphi(\mu',t).
\end{equation}
Moreover, since the dynamics is uniquely ergodic, see
Lemma~\ref{medidainvariante}, for all $t>0$ the map $\mu\mapsto
\varphi(\mu,t)$ admits a unique fixed point, that is, there is a unique
$\nu\in \mc{P}$ such that
\begin{equation}\label{ergodic}
\varphi(\nu,t) = \nu \quad \textrm{ for any }\quad t\geq 0.
\end{equation}
Recall that we denote the law of $Y_\infty$ by $\nu$.
By Proposition~\ref{P:limitSDE} and Proposition~\ref{Supporttheorem}
 it follows that for all
$\delta>0$ the law of $Y_\delta(\sgn(x)\infty)$ denoted by $\mu_\delta^\infty$ belongs to
$\mc{P}$. By the Markov property of the extended process, see Proposition~\ref{P:limitSDE},
we have for any $\delta\in (0,t)$
\begin{equation}
  \label{markov-flow}
  \mu^\infty_t= \varphi(\mu_\delta^\infty,t-\delta).
\end{equation}
Let $t>0$ be fixed. We observe that there exists $\delta \in (0,t)$ such that $\mu_\delta^\infty\neq \nu$.
By~\eqref{injective},~\eqref{ergodic} and~\eqref{markov-flow} it follows that 
$
  \mu^\infty_t = \varphi(\mu_\delta^\infty,t-\delta) \neq
  \varphi(\nu,t-\delta) = \nu
$
and hence
\begin{equation}
  \label{eq:12}  
 0<\dtv\prt{ \mu^\infty_t,\nu}= \dtv\prt{Y_t(\sgn(x)\infty),Y_{\infty}}. 
\end{equation}

\begin{appendix}
\section{\textbf{The Continuous Markovian extension: details}}
\label{Zproof}
In this section we prove that the SDEs defined in~\eqref{Y00}
and~\eqref{Ysde}, or equivalently in~\eqref{gensde}, with state space
$\bb{R}$ may be extended to $\overline{\bb{R}}: = \bb{R}\cup \{\pm
\infty\}$. Furthermore, we show that this extension is Markovian and
that the family of transition kernels associated to it is continuous
with respect to the initial condition, in the sense 
of~\eqref{catinfty}.

The main reason for this appendix is to provide a full proof of the continuous Markovian extension of SDE~\eqref{gensde}.  
The methods we employ here are of a probabilistic and  path-wise nature offering an alternative to the classical analytical techniques of generators and resolvents presented in~\cite[p.366 ff]{EK},~\cite{Feller54},\cite[Chap.~17]{Kallenberg}, \cite{Feller52}. More specifically, we apply martingale convergence methods and $L^2$-bounds, which can be found in \cite[Chap.~5]{KarSch98} together with standard methods for ODEs and SDEs which can be found in~\cite[Thm.~1]{Pontryagin} and \cite[Thm.~1.1]{ikedawatanabe}.  

For the extension, we consider $\overline{\bb{R}}$ endowed with the
Borel $\sigma$-algebra associated to the metric $d_\infty
:\overline{\bb{R}}\times\overline{\bb{R}}\to \bb{R}_+$ defined by
\begin{equation}
  \label{compact-metric}
  d_\infty(x_1,x_2): =    \abs{\arctan(x_1) - \arctan(x_2)},
\end{equation}
where $\arctan:\overline{\bb{R}}\to \bb{R}$ is the continuous function
defined by 
\begin{equation}
  \label{arctan}
  \arctan(v) :=
  \begin{cases}
    -\pi/2  & \text{ for } v = -\infty,    \\
    \int_0^v \frac{1}{1 + u^2} \ud u  & \text{ for } v \in \bb{R},    \\
    \pi/2  & \text{ for } v  = \infty.
    \end{cases}
\end{equation}
Let $Y^\veps(x) = (Y^\veps_t(x), t \geq 0)$ be the
unique strong solution of~\eqref{gensde}.
For $x \in \bb{R}$ let $P^\veps_x$ be the law induced by
$Y^\veps(x)$ on the space of real valued continuous functions $(C, \mc{C})$  and let
$\Pe^\veps_x$ be its law on the space of $\overline{\bb{R}}$-valued
continuous functions $(\Ce,\Cce)$. To complete the extension we define
$\Pe^\veps_x$ for $x \in\{-\infty, \infty \}$ as the law on
$(\Ce,\Cce)$ induced by $Y^\veps(\infty)$ and $Y^\veps(-\infty) $
where for all $t \geq 0$
\begin{equation}\label{extmon}
Y^\veps_t(\infty):=\lim_{x \to \infty}Y^\veps_t(x)
\qquad \text{ and }\qquad
Y^\veps_t(-\infty) := \lim_{x \to -\infty}Y^\veps_t(x).
\end{equation}
The above extension is well-defined since, by the comparison lemma for
SDEs, see \cite[Thm. 1.1]{ikedawatanabe},
\begin{equation}\label{Ymon}
x\leq x' \Rightarrow \bb{P}(Y^\veps_t(x) \leq Y^\veps_t(x') \quad \forall t\geq 0)=1.
\end{equation}
This section is divided into three subsections. In
Subsection~\ref{cdown} we prove trajectory properties
of the above extension. In Subsection~\ref{mprop} we
prove that the extension is Markovian. Finally, in
Subsection~\ref{cprop} we prove that the extension is
continuous with respect to the initial condition.
\subsection{\textbf{Coming down from infinity}}
\label{cdown}
We next explain when a solution to an SDE \emph{comes down from
  infinity}. This is based on entrance conditions at the boundary for
ODEs. In fact, as we shall see in Lemma~\ref{L2bound},  for all $t>0$, $\veps \in[0,1]$, the family $(Y^\veps_t(x), x \in \bb{R} )$
satisfies a uniform $L^2$ bound and so are a.s. finite for all positive
times. This section is organized as follows: First, we prove an
entrance condition for ODEs. Then we show the uniform bounds in
$L^2$. Finally, we define what is meant by the integral form of the
solution when the initial condition is $\pm\infty$.  We include an
explanation of these standard techniques for completeness and to
prepare for specific results we will need.
\subsubsection{\textbf{Entrance condition for ODEs}}
Let 
\begin{equation}\label{entrance}
  \begin{split}
    \mathfrak{L}:=\Big\{G:\mathbb{R}\to \mathbb{R}|& \textrm{ $G$ is locally
      Lipschitz and } \\
      &\hspace{3cm} -\infty< \int_R^\infty \frac{1}{G(u)}\ud u<0
    \textrm{ for some $R>0$} \Big\}
  \end{split}
\end{equation}\
be the space of velocity fields in which we are interested in.
\begin{lemma}[\textbf{Descent from infinity}]
\label{comedowsl}  
Given any fixed $G \in \mathfrak{L}$ and any $x \in \bb{R}$,
let $\psi(x):=(\psi_{t}(x),t\geq 0)$ be
the unique solution of the differential equation
\begin{equation}
\label{modeloinftycd}
\left\{
\begin{array}{r@{\;=\;}l}
\frac{\ud}{\ud t} \psi_t &G(\psi_t)\quad \textrm{ for }\quad  t\geq  0, \\
\psi_0 & x.
\end{array}
\right.
\end{equation}
Then for all $t>0$, the limit
$ \psi_t(\infty):=\lim_{x \to \infty}\psi_t(x)$
is well-defined and finite.
\end{lemma}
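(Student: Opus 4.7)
The plan is to exploit the fact that the integrability condition in the definition of $\mathfrak{L}$ is exactly an Osgood-type ``finite descent time from infinity'' condition. First, I would observe that $\int_R^\infty G(u)^{-1}\,\ud u$ being a finite negative number forces $G(u)<0$ on $[R,\infty)$ and bounded away from zero on compact subsets of $[R,\infty)$. This lets me define the continuous strictly increasing function
\[
T(x):=-\int_R^x \frac{\ud u}{G(u)},\qquad x\geq R,
\]
with $T(R)=0$ and $T(\infty):=\lim_{x\to\infty}T(x)=-\int_R^\infty G(u)^{-1}\,\ud u\in (0,\infty)$. Interpretively, $T(x)$ is the time it takes the solution of~\eqref{modeloinftycd} starting at $x$ to reach $R$.

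Next, I would note that by the comparison principle for locally Lipschitz ODEs, the map $x\mapsto \psi_t(x)$ is non-decreasing, so $\psi_t(\infty):=\lim_{x\to\infty}\psi_t(x)$ exists in $(-\infty,\infty]$ and it suffices to produce a finite upper bound. Separation of variables in~\eqref{modeloinftycd} then yields that, as long as $\psi_s(x)\geq R$, one has $T(\psi_s(x))=T(x)-s$, or equivalently $\psi_s(x)=T^{-1}(T(x)-s)$.

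I would then split into two cases. If $t<T(\infty)$, then for all $x$ large enough one has $T(x)>t$, the previous identity holds at time $t$, and letting $x\to\infty$ with $T^{-1}$ continuous I conclude $\psi_t(\infty)=T^{-1}(T(\infty)-t)\in[R,\infty)$. If $t\geq T(\infty)$, the trajectory reaches $R$ at time $T(x)\to T(\infty)$; by the flow (semigroup) property of the ODE one has $\psi_t(x)=\psi_{t-T(x)}(R)$ for $t\geq T(x)$, and continuity of $s\mapsto \psi_s(R)$ (a consequence of the standing global existence of the unique solution) yields $\psi_t(\infty)=\psi_{t-T(\infty)}(R)$, which is finite.

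The key conceptual identification is that the integrability condition encodes finiteness of the descent time from infinity, after which the dynamics continues uneventfully. The only mild technical care needed is the junction at $t=T(\infty)$, handled by continuity of $T^{-1}$ at zero (where $T^{-1}(0)=R$), so that the two cases agree consistently; I do not expect a serious obstacle beyond this bookkeeping.
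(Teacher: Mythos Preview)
Your proof is correct and follows essentially the same separation-of-variables approach as the paper: both invert the integral $\int 1/G(u)\,\ud u$ to express the solution explicitly and then pass to the limit $x\to\infty$ using monotonicity and the finiteness of the improper integral. The only cosmetic difference is that the paper avoids your case split at $t=T(\infty)$ by first fixing an arbitrary $T>0$, setting $L:=\psi_T(R)$, and working on the larger interval $[L,\infty)$ so that the inversion formula holds uniformly for all $t\in[0,T]$ and all $x\geq R$.
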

\begin{proof}
By the comparison lemma for ODEs we have
\begin{equation}\label{comparison}
 \forall \, t \geq 0,\; x_1, x_2\in \bb{R},\; x_1\leq x_2\Rightarrow \psi_t(x_1) \leq \psi_t(x_2). 
\end{equation}
Therefore, the limit $ \psi_t(\infty) := \lim_{x \to \infty}\psi_t(x)$
is well-defined but may be infinite.  In the sequel, we show that
$\psi_t(\infty)<\infty$ for any $t>0$.  Since $G$ is locally Lipschitz
and satisfies~\eqref{entrance}, $G(x)<0$ for all $x \geq R$.  Fix
$T>0$ and let $L : = \psi_T(R)$.  By uniqueness of solutions, the map
$t\mapsto \psi_t(R)$ is decreasing and for all $x \geq L$
$G(x)<0$. Let $F_{L,R}:[L, R] \to [0,T]$ be such that
$F_{L,R} (\psi_t(R)) = t$ for all $t \in [0,T]$ and note that
$F^{\prime}_{L,R}(u) = 1/G(u)$.  Since $F_{L,R}(R) = 0$ and $F_{L,R}(L) = T$,
we obtain
\begin{equation}\label{Lentrance}
  -T = \int_{L}^R F^{\prime}_{L,R}(u) \ud u =
  \int_{L}^R \frac{1}{G(u)} \ud u  .
\end{equation}
Now let $F_L:[L,\infty]\rightarrow [F_L(\infty),0]$ be given by 
$    F_L(x):=\int_{L}^{x}\frac{1}{G(u)}\ud u$
for  $x \in [L, \infty)$,
and set $F_L(\infty):= \lim_{x \to \infty}F_L(x)$.
By~\eqref{entrance} and~\eqref{Lentrance},
$F_L(\infty)\in(-\infty,0)$.
By~\eqref{comparison}, for any
$x \in [ R, \infty)$, $t \leq T$, we have
$\psi_t(x) \geq L $ and
$\frac{\ud}{\ud t}F_L(\psi_t(x)) = 1$. We thereby, conclude that
\begin{equation}\label{implicit}
F_L(\psi_{t}(x))
=t+F_L(x),\quad t \in[0,T].
\end{equation}
Again by~\eqref{comparison} and the continuity of $F_L$,
we can take the limit
$x \to \infty$ in~\eqref{implicit}  to obtain
\begin{equation}\label{wisejordan}
F_L(\psi_{t}(\infty))
= t + F_L(\infty),\quad  t \in[0,T].
\end{equation}  
Therefore
$\psi_{t}(\infty)< \infty$
for any
$t >0.$
\end{proof}
We may now define the \emph{extended ODE}. By Lemma~\eqref{comedowsl}
$\psi(\infty): = (\psi_t(\infty), t \geq 0)$ solves 
\begin{equation}
\label{modeloinftycdi}
\left\{
\begin{array}{r@{\;=\;}l}
\frac{\ud}{\ud t} \psi_t &G(\psi_t)\quad \textrm{ for }\quad t> 0, \\
\psi_0 & \infty,
\end{array}
\right.
\end{equation}
in the sense that $\psi_{0}(\infty) = \infty$,
and for any $t_0>0$, the following integral relation holds
\begin{equation}\label{integralform}
\psi_{t}(\infty)= \psi_{t_0}(\infty) +\int_{t_0}^t G(\psi_{s}(\infty)) \ud s \quad \textrm{ for all }\quad t \geq t_0.
\end{equation}
Equation~\eqref{integralform} is a consequence of the Fundamental Theorem of Calculus. 
Indeed, by~\eqref{wisejordan}, for any $s \geq t_0$,
$\psi_{s}(\infty) := F_{L}^{-1}(s + F_L(\infty)) \in \bb{R}$
and taking derivatives on both sides of~\eqref{wisejordan}, 
we obtain
\begin{equation}\label{chainrule}
\frac{\ud}{\ud s} \psi_{s}(\infty) = \frac{1}{F^{\prime}_L(\psi_{s}(\infty))} = G(\psi_{s}(\infty)).
\end{equation}
\subsubsection{\textbf{Uniform $L^2$ bounds}}
\label{vbound}
In what follows, we prove a second moment bound for any fixed $t>0$
for $(Y^\veps_t(x),x \in \bb{R})$.
\begin{lemma}[$L^2$-bound]\label{L2bound}  
If $\veps \in [0,1]$, then the process $Y^\veps(x)$ defined
by~\eqref{gensde} satisfies for any $t>0$, 
\begin{equation}\label{2mombound}
  \sup_{x \in \bb{R}}  \mathbb{E}[|Y^\veps_{t}(x)|^2]<\infty.
\end{equation}
\end{lemma}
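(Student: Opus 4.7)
The plan is to apply It\^o's formula to $\Phi(z) = z^2$ and exploit the strong mean-reverting character of $F_\veps$ via a Lyapunov-type inequality with \emph{superlinear} dissipation. The resulting integral differential inequality for $u(t) := \mathbb{E}[|Y^\veps_t(x)|^2]$ has the key feature that its solutions come down from infinity, yielding a bound on $u(t)$ independent of $u(0) = x^2$ and hence uniform in $x \in \bb{R}$.

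First I would establish the pointwise inequality
\[
zF_\veps(z) \leq -c|z|^{2+\alpha} + C \qquad \text{for every } z \in \bb{R},
\]
with constants $c = c(\veps) > 0$ and $C = C(\veps) \geq 0$. For $\veps = 0$ this is the identity $zF_0(z) = -C_0|z|^{2+\alpha}$, with $c = C_0$ and $C = 0$. For $\veps \in (0,1]$: convexity of $V$ together with $V^\prime(0) = 0$ gives $zV^\prime(z) \geq 0$, hence $zF_\veps(z) \leq 0$; on $|z| \geq R/b_\veps$ Hypothesis~\ref{hyp4} provides $V^\prime(b_\veps z) \geq c_0 (b_\veps z)^{1+\alpha}\sgn(z)$, which translates into $zF_\veps(z) \leq -c_0 |z|^{2+\alpha}$; on the complement $|z| \leq R/b_\veps$ the constant $C = c_0(R/b_\veps)^{2+\alpha}$ absorbs the (bounded) loss.

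Next, with $\tau_N := \inf\{t \geq 0 : |Y^\veps_t(x)| \geq N\}$, I would apply It\^o's formula to $|Y^\veps_{t \wedge \tau_N}(x)|^2$ and take expectations; the stochastic integral is a genuine martingale up to $\tau_N$. Using only $zF_\veps(z) \leq 0$ gives $\mathbb{E}[|Y^\veps_{t \wedge \tau_N}(x)|^2] \leq |x|^2 + t$, so that by Fatou $\tau_N \to \infty$ almost surely and $u(t) \leq |x|^2 + t$. Re-inserting the sharper Lyapunov inequality of Step 1 together with Jensen's inequality (the map $r \mapsto r^{1+\alpha/2}$ is convex on $[0,\infty)$, hence $\mathbb{E}[|Y^\veps_s|^{2+\alpha}] \geq u(s)^{1+\alpha/2}$) produces the closed integral differential inequality
\[
  u(t) \leq u(0) + \int_0^t \bigl( -2c\, u(s)^{1+\alpha/2} + 2C + 1 \bigr)\, ds, \qquad t \geq 0.
\]

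Finally, I would compare $u$ with the scalar ODE $v^\prime(t) = -2c\, v(t)^{1+\alpha/2} + 2C + 1$, $v(0) = u(0)$, through a nonlinear (Bihari-type) Gronwall argument whose monotonicity hypothesis is satisfied because $r \mapsto -2c r^{1+\alpha/2} + 2C + 1$ is decreasing on $[0,\infty)$; this yields $u(t) \leq v(t)$. Since the dissipation exponent $1 + \alpha/2$ strictly exceeds one, $v$ \emph{comes down from infinity}: setting $v^* := ((2C+1)/(2c))^{2/(2+\alpha)}$, on the region $v \geq 2v^*$ one has $v^\prime \leq -c v^{1+\alpha/2}$, so integrating $(v^{-\alpha/2})^\prime \geq c\alpha/2$ gives $v(t) \leq \max\{2v^*, (c\alpha t / 2)^{-2/\alpha}\}$ for every $t > 0$ and every $v(0) \in [0,\infty]$; this transfers to $u$ and yields \eqref{2mombound}. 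The main obstacle is this last step: a linear Gronwall would only produce $u(t) \leq (u(0) + \text{const})\, e^{Ct}$, which is useless for uniformity in $x$. The essential ingredient is precisely the superlinearity $\alpha > 0$ furnished by Hypothesis~\ref{hyp2}, which must be captured through a genuinely nonlinear comparison rather than through a Gronwall exponentiation.
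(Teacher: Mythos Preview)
Your proof is correct and follows essentially the same approach as the paper: It\^o's formula on $|Y^\veps_t|^2$, the drift bound $zF_\veps(z)\le -c|z|^{2+\alpha}+C$, Jensen's inequality to close the estimate on $u(t)=\mathbb{E}[|Y^\veps_t|^2]$, and then comparison with a scalar ODE that comes down from infinity. The only cosmetic difference is that the paper obtains the drift bound with $C=0$ and $c$ independent of $\veps$ (which it later needs for uniform-in-$\veps$ estimates), whereas you allow $\veps$-dependent constants and an additive $C$; for the present lemma your version is enough.
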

\begin{proof} 
  Fix $\veps\in [0,1]$ and let $G_\veps: \bb{R} \to \bb{R}$ be given
  by $G_\veps(y):= yF_\veps(y)$ for all $y\in \mathbb{R}$ with
  $F_\veps$ defined in~\eqref{fieldnota}. Recall that $V$ satisfies
  Hypothesis~\ref{hyp1}, Hypothesis~\ref{hyp2} and
  Hypothesis~\ref{hyp4}.  Since $V^\prime$ is an odd function, it
  follows that $G_\veps(y) = G_\veps(\abs{y})\leq 0$ for all
  $y\in \mathbb{R}$. By Hypothesis~\ref{hyp2} and
  Hypothesis~\ref{hyp4} (Condition~\eqref{GC}) there is $c_*>0$ for
  which
\begin{equation}\label{ec:Ger}
G_\veps(y)\leq 
  -c_* |y|^{2+\alpha}\quad \textrm{ for all }\quad y\in \mathbb{R}.
\end{equation}
By It\^o's formula, for $t>0$, we have
\begin{equation}\label{ito2y}
\begin{aligned}
|Y^\veps_{t}(x)|^2= x^2 + 2
\int_0^t G_\veps(\yyyz{\veps}{s}{x}) \ud s+
t+
M^{x}_t,
\end{aligned}
\end{equation}
where $M^x=(M^{x}_t, t \geq 0)$ is a local martingale given by 
\begin{equation}\label{local}
 M^{x}_t=2\int_{0}^{t}\yyyz{\veps}{s}{x}\ud B_s.
\end{equation}
Since $G_\veps(y)\leq 0$ for all $y\in \bb{R}$,
a localization argument
yields that,
$\mathbb{E} [|Y^\veps_{t}(x)|^2] 
\leq  x^2 +t$ for any $t\geq 0$.
As a consequence, we have that $M^x$
is a true mean-zero  martingale.
Now, if we take expectation
on both sides of equality~\eqref{ito2y},
apply Fubini's theorem and use~\eqref{ec:Ger}
we obtain for all $t>0$ 
\begin{equation}\label{itoey}
\begin{split}
\mathbb{E}[|Y^\veps_{t}(x)|^2]&= x^2+2\int_0^t\mathbb{E}[G_\veps\prt{Y^\veps_{t}(x)}]  \ud s+t\\
&
\leq  x^2-2c_*\int_0^t\mathbb{E}[|Y^\veps_{s}(x)|^{2+\alpha}]\ud s +t.
\end{split}
\end{equation}
By Jensen's inequality we obtain
\begin{equation}\label{GCeps2}
  \begin{aligned}
   \mathbb{E}[|Y^\veps_{t}(x)|^{2+\alpha}]\geq 
   (\mathbb{E}[|Y^\veps_{t}(x)|^{2}])^{1+\alpha/2} \quad
   \textrm{for all }\quad t\geq 0.
    \end{aligned}
\end{equation}
By~\eqref{itoey} and~\eqref{GCeps2} if we denote 
$\psi^\veps_t(x):=\mathbb{E}[|Y^\veps_{t}(x)|^2]$ 
and let $\widetilde{G}(y) := -2c_*|y|^{1+\alpha/2} + 1$ for all $y\in \mathbb{R}$
then we have that
\begin{equation}\label{inedey}
  \begin{aligned}
    \frac{\ud}{ \ud t}\psi^\veps_t(x)
    &\leq  \widetilde{G}(\psi^\veps_t(x))\quad \textrm{ for } \quad t\geq 0.
  \end{aligned}
\end{equation}
Now, we let $(\widetilde{\psi}_t(x),t\geq 0)$ be the solution
of~\eqref{modeloinftycdi} for $G = \widetilde{G}$ and with initial
condition $\widetilde{\psi}_0(x) = \psi^\veps_0(x) = x^2$.  Observe
that $\widetilde{G} \in \mathfrak{L}$, where $\mathfrak{L}$ is defined
in~\eqref{entrance}.  To conclude~\eqref{2mombound}, we rely on
monotonicity and Lemma~\ref{comedowsl}. Indeed, for any $x \in \bb{R}$
and $t>0$
\begin{equation}\label{varboundt}
\bb{E}\crt{|Y^\veps_{t}(x)|^2}\leq \widetilde{\psi}_t(x)
  \leq   \lim_{z \to \infty}\widetilde{\psi}_t(z)
  =    \widetilde{\psi}_t(\infty)< \infty.
\end{equation}  
\end{proof}
\subsubsection{\textbf{Integral expression}}
\label{esol}
Now, we examine the integral form of the limit process.

\begin{proposition}[Integral form]
\label{Pcdfi}
For any fixed $\veps\in [0,1]$, let $F_\veps$ be as defined 
in~\eqref{fieldnota}. Then, the limit process
$Y^\veps_t(\sgn(x)\infty):=\lim_{r\to\infty}Y^\veps(\sgn(x)\cdot r)$ 
solves
\begin{equation}\label{Y0*}
\left\{
\begin{array}{r@{\;=\;}l}
  \ud Y_t & F_\veps(Y_t)  \ud t + \ud B_t\quad \textrm{ for  } \quad t> 0,\\[0.2cm]
   Y_0 & \sgn (x) \infty,
\end{array}
\right.
\end{equation}
in the sense that almost surely $\lim_{t \to 0}{Y}_t=\sgn(x)\infty$  
and for any $0<t_0< t$
\begin{equation}\label{zintegralsense0}
{Y}_t = {Y}_{t_0} + \int_{t_0}^t F_\veps ({Y}_s) \, \ud s + B_t - B_{t_0}.
\end{equation}
\end{proposition}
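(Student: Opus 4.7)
By the even symmetry of $V$ and the convention $\sgn(0)\infty = 0$, it suffices to treat the case $x > 0$, so the target is $Y^{\veps}_t(\infty) := \lim_{r\to\infty} Y^{\veps}_t(r)$; the case $x<0$ is symmetric. The comparison property~\eqref{Ymon} guarantees that this pointwise limit exists in $(-\infty, +\infty]$ almost surely, and Lemma~\ref{L2bound} combined with Fatou's lemma yields $\bb{E}[|Y^{\veps}_t(\infty)|^2] < \infty$ for every $t > 0$; in particular $Y^{\veps}_t(\infty)$ is finite almost surely for each $t>0$.

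\emph{Initial condition.} To establish that almost surely $\lim_{t \to 0^+} Y^{\veps}_t(\infty) = +\infty$, I would fix a sequence $r_n \nearrow \infty$. On the full-probability event where~\eqref{Ymon} holds simultaneously for all pairs in $\{r_n\}\cup\{\infty\}$ and every path $t \mapsto Y^{\veps}_t(r_n)$ is continuous with $Y^{\veps}_0(r_n) = r_n$, monotonicity in the initial condition forces $Y^{\veps}_t(\infty) \geq Y^{\veps}_t(r_n)$ for all $t \geq 0$, so $\liminf_{t \to 0^+} Y^{\veps}_t(\infty) \geq r_n$ for every $n$, which is the claim.

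\emph{Integral identity.} For finite $r > 0$ and $0 < t_0 \leq t$, $Y^\veps(r)$ satisfies
\[
Y^{\veps}_t(r) = Y^{\veps}_{t_0}(r) + \int_{t_0}^{t} F_\veps\bigl(Y^{\veps}_s(r)\bigr)\,\ud s + B_t - B_{t_0}.
\]
The endpoints $Y^{\veps}_{t_0}(r)$ and $Y^{\veps}_t(r)$ converge to $Y^{\veps}_{t_0}(\infty)$ and $Y^{\veps}_t(\infty)$ by definition, and the Brownian increment is independent of $r$. For the drift integral, I would fix $r_0 > 0$ and use the sandwich $Y^{\veps}_s(r_0) \leq Y^{\veps}_s(r) \leq Y^{\veps}_s(\infty)$ valid for $r \geq r_0$ and $s \geq 0$ from~\eqref{Ymon}. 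Since $V$ is convex, $F_\veps$ is non-increasing, which yields the pathwise bound
\[
|F_\veps(Y^{\veps}_s(r))| \leq |F_\veps(Y^{\veps}_s(r_0))| + |F_\veps(Y^{\veps}_s(\infty))|.
\]
Combined with the continuity of $F_\veps$ and the pointwise convergence $Y^{\veps}_s(r) \to Y^{\veps}_s(\infty)$, dominated convergence on $[t_0, t]$ then delivers~\eqref{zintegralsense0}.

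\emph{Main obstacle.} The substantial technical point is producing an integrable majorant for the dominated convergence argument: path continuity of $s\mapsto Y^{\veps}_s(r_0)$ immediately bounds $|F_\veps(Y^{\veps}_\cdot(r_0))|$ on compacts, but $s \mapsto Y^{\veps}_s(\infty)$ is a priori only known pointwise as a monotone limit of continuous paths, so its essential boundedness on $[t_0,t]$ is not automatic. To close this gap I would upgrade Lemma~\ref{L2bound} to a maximal bound by applying Doob's $L^2$ inequality to the martingale part of~\eqref{ito2y} and absorbing the super-linear negative drift via~\eqref{ec:Ger}; this gives $\sup_{r>0} \bb{E}[\sup_{s \in [t_0, T]} |Y^{\veps}_s(r)|^2] < \infty$, and Fatou's lemma along $r \to \infty$ then produces $\sup_{s \in [t_0, T]} |Y^{\veps}_s(\infty)| < \infty$ almost surely. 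This pathwise boundedness renders $|F_\veps(Y^{\veps}_\cdot(\infty))|$ integrable on $[t_0, t]$ and legitimizes the interchange of limit and integral, concluding the proof.
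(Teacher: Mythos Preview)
Your argument is correct and takes a route different from the paper's. Both proofs must justify passing the limit $r\to\infty$ through the drift integral $\int_{t_0}^t F_\veps(Y^\veps_s(r))\,\ud s$. The paper does this by proving tightness of the family $(Y^\veps_\cdot(r))_{r\geq 0}$ in $C([t_0,T])$ via Aldous' criterion (Lemma~\ref{tightell}); tightness together with pointwise convergence upgrades to uniform convergence in probability on $[t_0,T]$, and then local Lipschitz continuity of $F_\veps$ plus a sup bound on $Y^\veps_\cdot(\infty)$ yields~\eqref{uniflim}. Your approach is more direct: you exploit that $F_\veps$ is \emph{monotone} (from convexity of $V$) to build an explicit pathwise majorant out of the sandwich $Y^\veps_s(r_0)\leq Y^\veps_s(r)\leq Y^\veps_s(\infty)$, and apply dominated convergence. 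The maximal $L^2$ estimate you need (and correctly obtain from~\eqref{ito2y} with Doob's inequality) is essentially the same bound the paper uses inside its tightness argument, cf.~\eqref{split2Z}. Your route is shorter and avoids the tightness machinery, at the price of depending on the monotonicity of $F_\veps$; the paper's tightness argument would go through for non-monotone locally Lipschitz drifts. You also verify explicitly the initial condition $\lim_{t\to 0^+}Y^\veps_t(\infty)=+\infty$, which the paper's proof of Proposition~\ref{Pcdfi} leaves implicit.
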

\begin{proof}
  Assume without loss of generality that $x>0$. By \eqref{Ymon}
  $Y^\veps_t(r)$ increases with $r$ and therefore the limit
  $\yyyz{\veps}{t}{\infty}$ exists. By~\eqref{2mombound} it follows that $\bb{P}(\yyyz{\veps}{t}{\infty}<\infty) = 1$ for
  any $t>0$.  Given $T> t_0>0$, we claim that, for every $\delta>0$
\begin{equation}\label{uniflim}
\lim_{r \to \infty}\bb{P}\Big(\sup_{t \in [t_0,T]}
\abs{F_\veps(\yyyz{\veps}{t}{\infty}) - F_\veps(Y^\veps_t(r))}
>\delta\Big)= 0. 
\end{equation}
The proof of~\eqref{uniflim} is postponed to Lemma~\ref{tightell}
below. Now,
note that, almost surely 
\begin{equation}\label{intx}
 Y^\veps_t(r) = Y^\veps_{t_0}(r)+ \int_{t_0}^ t
 F_\veps(Y^\veps_s(r))\,\ud s + B_t - B_{t_0}.  
\end{equation} 
By~\eqref{uniflim} we may take the limit inside the above integral and
therefore 
\begin{equation}\label{ieq}
 \bb{P}\Big( Y^\veps_{t}(\infty) = \yyyz{\veps}{t_0}{\infty}+ \int_{t_0}^t
  F_\veps(\yyyz{\veps}{s}{\infty}) \,\ud s + B_t - B_{t_0}  \quad \forall \, t>t_0\Big) =1. 
\end{equation}
\end{proof}
\begin{lemma}\label{tightell}
  For any $T>t_0>0$ and $\delta>0$ the equality in~\eqref{uniflim}
  holds true.
\end{lemma}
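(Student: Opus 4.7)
The plan is to combine the comparison principle~\eqref{Ymon} with the convexity of $V$ to show that the difference $Y^\veps_t(r')-Y^\veps_t(r)$ is non-negative and \emph{non-increasing} in $t$ for any fixed $r\leq r'$, and then to pass to the limit $r'\to\infty$. Since both $Y^\veps(r)$ and $Y^\veps(r')$ satisfy the integral form of~\eqref{gensde} driven by the same Brownian motion,
\[
Y^\veps_t(r')-Y^\veps_t(r)
=\big(Y^\veps_{t_0}(r')-Y^\veps_{t_0}(r)\big)
+\int_{t_0}^{t}\big[F_\veps(Y^\veps_s(r'))-F_\veps(Y^\veps_s(r))\big]\,\ud s.
\]
Convexity of $V$ makes $F_\veps$ non-increasing, while~\eqref{Ymon} yields $Y^\veps_s(r)\leq Y^\veps_s(r')$; hence the integrand is non-positive. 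Letting $r'\to\infty$ and using the monotone convergence~\eqref{extmon} together with Lemma~\ref{L2bound} (which via Fatou makes $Y^\veps_t(\infty)$ almost surely finite for every $t>0$) I obtain
\[
0\leq Y^\veps_t(\infty)-Y^\veps_t(r)\leq Y^\veps_{t_0}(\infty)-Y^\veps_{t_0}(r)\quad\text{for every }t\in[t_0,T].
\]

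Because $t\mapsto Y^\veps_t(r)$ is continuous for each finite $r$, the above inequality identifies $Y^\veps_\cdot(\infty)$ as the uniform-in-$t$ limit on $[t_0,T]$ of continuous paths, so it is almost surely continuous on $(0,\infty)$; in particular $\sup_{t\in[t_0,T]}|Y^\veps_t(\infty)|<\infty$ almost surely. Fixing $r_0>0$ and restricting to $r\geq r_0$, the comparison principle yields $Y^\veps_t(r_0)\leq Y^\veps_t(r)\leq Y^\veps_t(\infty)$, and hence
\[
|Y^\veps_t(r)|\leq \max\bigl(|Y^\veps_t(r_0)|,|Y^\veps_t(\infty)|\bigr)
\]
is almost surely bounded on $[t_0,T]$ uniformly in $r\geq r_0$.

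Given $\delta>0$ and $\eta>0$ I pick $K>0$ large enough that
\[
A_K:=\Big\{\sup_{t\in[t_0,T]}|Y^\veps_t(\infty)|>K\Big\}\cup\Big\{\sup_{t\in[t_0,T]}|Y^\veps_t(r_0)|>K\Big\}
\]
has probability less than $\eta/2$. On $A_K^c$ both $Y^\veps_t(r)$ and $Y^\veps_t(\infty)$ lie in $[-K,K]$ for every $t\in[t_0,T]$ and $r\geq r_0$. Continuity of $F_\veps$ on the compact $[-K,K]$ provides $\gamma>0$ such that $|F_\veps(a)-F_\veps(b)|<\delta$ whenever $a,b\in[-K,K]$ and $|a-b|<\gamma$. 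Since $Y^\veps_{t_0}(r)\uparrow Y^\veps_{t_0}(\infty)$ almost surely, for $r$ large enough $\bb{P}\big(Y^\veps_{t_0}(\infty)-Y^\veps_{t_0}(r)>\gamma\big)<\eta/2$; combining this with the monotonicity-in-$t$ displayed in the first paragraph yields
\[
\bb{P}\Big(\sup_{t\in[t_0,T]}\big|F_\veps(Y^\veps_t(\infty))-F_\veps(Y^\veps_t(r))\big|>\delta\Big)\leq \bb{P}(A_K)+\bb{P}\big(Y^\veps_{t_0}(\infty)-Y^\veps_{t_0}(r)>\gamma\big)<\eta,
\]
and~\eqref{uniflim} follows as $\eta>0$ is arbitrary. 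The one delicate step is establishing the monotonicity of $Y^\veps_t(\infty)-Y^\veps_t(r)$ in $t$, which rests entirely on the convexity of $V$; everything else is a standard uniform-continuity and tightness argument.
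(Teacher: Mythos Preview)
Your proof is correct and takes a genuinely different route from the paper's. The paper reduces~\eqref{uniflim} to two auxiliary statements---a uniform-in-$t$ bound on $|Y^\veps_t(\infty)|$ and uniform-in-$t$ convergence of $Y^\veps_t(r)$ to $Y^\veps_t(\infty)$---and then obtains both by proving tightness of the family $(Y^\veps(r),\,r\geq 0)$ in $C[t_0,T]$ via Aldous' criterion; this requires the $L^2$ moment bounds, Doob's submartingale inequality, and several localization estimates. You instead exploit convexity of $V$ directly: since $F_\veps$ is non-increasing, the map $t\mapsto Y^\veps_t(\infty)-Y^\veps_t(r)$ is non-increasing on $[t_0,T]$, which immediately upgrades almost-sure convergence at the single time $t_0$ to uniform convergence on the whole interval. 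This is more elementary and sidesteps the tightness machinery entirely; it also gives continuity of $Y^\veps_\cdot(\infty)$ on $(0,\infty)$ as a byproduct. The paper's approach is less sharp here but more robust: it does not use monotonicity of $F_\veps$ and would carry over to drifts that are not monotone, provided the same moment bounds hold.
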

\begin{proof}
We first note that~\eqref{uniflim} is a consequence of 
\begin{equation}\label{locald1}
 \lim_{A \to \infty} \bb{P}\prt{\sup_{t \in [t_0,T]}\abs{Y^\veps_t(\infty)} >A } = 0
\end{equation}
and
\begin{equation}\label{locald2}
  \forall \delta>0 \quad
  \limsup_{r \to \infty}
  \bb{P}\prt{\sup_{t \in [t_0,T]}
    \abs{Y^\veps_{t}(\infty) - Y^\veps_t(r)} >\delta } = 0.
\end{equation}
Indeed, as $F_\veps$ is locally Lipschitz, for any $\delta>0$ and $A>0$
there is $\delta' = \delta'(\delta,A, \veps)>0$ for which
\begin{equation}
\begin{split}
\bb{P}\Big(\sup_{t \in [t_0,T]} \abs{F_\veps(Y^\veps_{t}(\infty)) - F_\veps(Y^\veps_t(r))}> \delta \Big)&\leq
\bb{P}\prt{\sup_{t \in [t_0,T]}\abs{Y^\veps_{t}(\infty) - Y^\veps_t(r)} >\delta' }\\
&\qquad +
\bb{P}\prt{\sup_{t \in [t_0,T]}\abs{Y^\veps_t(\infty)} >A }.
\end{split}
\end{equation}
By monotonicity and Lemma~\ref{L2bound},
$\lim_{r\to \infty} Y^\veps_t(r) = Y^\veps_t(\infty) \in \bb{R}$, for
any $t > 0$.  The pointwise limit, does not guarantee~\eqref{locald1}
and~\eqref{locald2}.  In order to obtain the above uniform bounds we
will show that the family $(Y^\veps(r), r\geq 0)$ is tight in the
space of continuous paths $C$.  Tightness in $C$ and pointwise
convergence imply uniform convergence of the family and the
bounds~\eqref{locald1} and~\eqref{locald2}.  By Aldous' tightness
criterion, see~\cite[Thm.~16.10, p.178]{Bil99} or
\cite[Thm.~4.1.3, p.51]{kipnislandim} we only need to show that
\begin{equation}\label{ald1}
  \forall \, t \in [t_0,T]\quad   \lim_{A \to \infty}
  \sup_{r \in \bb{R}}
  \bb{P}\prt{\abs{Y^\veps_t(r)} >A } = 0,
\end{equation}
and that
\begin{equation}\label{ald2}
  \forall \eta>0 \quad
  \lim_{\delta \to 0}
  \sup_{r \in \bb{R}}
  \bb{P}\prt{\sup_{\abs{t - s}<\delta}
    \abs{Y^\veps_{t}(r) - Y^\veps_s(r)} >\eta } = 0.
\end{equation}

\noindent{\bf Proof of~\eqref{ald1}.} 
By Lemma~\ref{L2bound} we have that
\begin{equation}\label{Cepst} 
C^\veps_ t := \sup_{r\in \bb{R}} \bb{E}[\abs{Y^\veps_t(r)}^2] < \infty.
\end{equation}
Therefore, by Chebyshev's inequality, for any $t \in [t_0,T]$ it follows that
\begin{equation}\label{supt}
  \sup_{r \in \bb{R}}\bb{P}\prt{\abs{Y^\veps_t(r)} >A }
  \leq \sup_{r \in \bb{R}}\frac{\bb{E}[\abs{Y^\veps_t(r)}^2]}{A^2}
  \leq \frac{C^\veps_t}{A^2} \to 0
  \quad \textrm{ as }\quad
  A \to \infty.
\end{equation}

\noindent{\bf Proof of~\eqref{ald2}.} 
We first write
$  Y^\veps_t(r) - Y^\veps_s(r) = \int_s^t F_\veps(Y^\veps_u(r)) \,\ud u + B_t - B_s.$
By the triangle inequality
and the continuity of Brownian motion,
to verify~\eqref{ald2}
it suffices to prove that for any $\eta>0$
\begin{equation}\label{intto0}
  \begin{aligned}
    \lim_{\delta \to 0}
    \sup_{r\in \bb{R}}
    \bb{P}\Big(\sup_{\abs{t-s}\leq \delta }
      \int_s^tF_\veps(Y^\veps_u(r)) \,\ud u> \eta\Big) =0.
\end{aligned}
\end{equation}
Fix  $K>0$, $\eta>0$, and let
$A^r_K: = \{\sup_{u \in [t_0,T]} |Y^\veps_u(r)|>K\}$. Now note that
there is $\delta = \delta(K,\eta,\veps)$ such that
$\delta \sup_{\abs{y} \leq K } \abs{F_\veps(y)} \leq \eta$ and
therefore for any $K>0$, $\eta>0$
\begin{equation}\label{totalprob0}
  \lim_{\delta \to 0}  \sup_{r \in \bb{R}} \bb{P}\Big(\sup_{\abs{t-s}\leq \delta } \int_s^tF_\veps(Y^\veps_u(r))\,\ud u> \eta \Big)  \leq
\sup_{r \in \bb{R}} \bb{P}\prt{A^r_K}.
\end{equation}
Since the left-hand side of the  above inequality does not depend on $K$,
we obtain that
\begin{equation}\label{supestimate}
  \lim_{\delta \to 0}
  \sup_{r \in \bb{R}}  \bb{P}\Big(\sup_{\abs{t-s}\leq \delta } \int_s^tF_\veps(Y^\veps_u(r))\,\ud u> \eta \Big)
  \leq \lim_{K \to \infty}  \sup_{r \in \bb{R}} \bb{P}(A^r_{K}).
\end{equation}
Hence, to obtain~\eqref{ald2} it is enough to prove that
$\lim_{K \to \infty}  \sup_{r \in \bb{R}} \bb{P}(A^r_{K})= 0$.
By~\eqref{ito2y} and~\eqref{local}, since
$G_\veps(y) = y F_{\veps}(y)\leq 0$ for all $y \in \bb{R}$
we have that
\begin{equation}\label{negeliminate}
\sup_{t\in [t_0,T]}   |Y^\veps_t(r)|^2 \leq
|Y^\veps_{t_0}(r)|^2+T+\sup_{t\in [t_0,T]}\abs{\int_{t_0}^t 2Y^\veps_s(r)\ud B_s}.
\end{equation}
The estimate on $A^r_K$ then becomes
\begin{equation}\label{split2mom}
  \begin{aligned}
    \bb{P} \prt{A^r_K} &\leq    \bb{P} \prt{|Y^\veps_{t_0}(r)|^2+T>K^2/2}
    +    \bb{P} \Big( \sup_{t\in [t_0,T]}\abs{\int_{t_0}^t2 Y^\veps_s(r)\ud B_s}>K^2/2\Big).
  \end{aligned}
\end{equation}
First, note that for $K^2/2 > 2T$ and by~\eqref{Cepst} we have
\begin{equation}\label{split1Z}
  \bb{P} \prt{|Y^\veps_{t_0}(r)|^2+T>K^2/2}
  \leq \bb{P} \prt{|Y^\veps_{t_0}(r)|^2>K^2/4}
  \leq 4 \frac{C^\veps_{t_0}}{K^2}. 
\end{equation}
To conclude, note that by Doob's submartingale inequality, It\^o's
isometry and~\eqref{Cepst} it follows that
\begin{equation}\label{split2Z}
  \begin{aligned}
    \bb{P} \Big(\sup_{t\in [t_0,T]}\abs{\int_{t_0}^t2 Y^\veps_s(r) \ud B_s}>K^2/2\Big) &\leq  \frac{ \int_{t_0}^T4 \bb{E} [ |Y^\veps_s(r)|^2] \ud s}{K^4/4} \leq  \frac{\int_{t_0}^T 16 C^{\veps}_s \, \ud s}{K^4}\\
      &\leq\frac{16 T \sup_{s \in [t_0,T]}C^{\veps}_s}{K^4}\to 0 \quad \text{ as }\quad K \to \infty,
    \end{aligned}
\end{equation}
where for the last passage we note that by~\eqref{varboundt},
$s \mapsto C^{\veps}_s$ is bounded by a continuous function in $(0,\infty)$.
\end{proof}

\subsection{\textbf{Markov property of the extended family}}
\label{mprop}
To prove that the extended family obtained by~\eqref{extmon} is Markovian,
one needs to verify the conditions
stated in Theorem~5.16 in~\cite[Ch~2, p.78]{KarSch98}.
These are the (i) compatible initial values,
(ii) the measurability of the transition laws,
and (iii) Markov property.

Note that for all $x \in \bb{R}$, $\veps>0$, and $t>0$,  $\bb{P}(Y^\veps_t(x) \in \{\pm\infty\}) = 0$ so
conditions (i)--(iii) are satisfied for all finite initial values.
It only remains to see that (i)--(iii) is satisfied for initial values
$x   \in  \{-\infty,\infty\} $. 
We only consider $x = \infty$, the case $x = -\infty$ is analogous.
For (i), note that
\begin{equation}
  \label{eq:28}
\bb{P}(Y^\veps_0(\infty)=  \infty)= \lim_{R\to \infty}\bb{P}(Y^\veps_0(\infty) >R)=
    \lim_{R\to \infty}\lim_{x \to \infty}\bb{P}(Y^\veps_0(x) >R)=1.
\end{equation}
For (ii), Proposition~\ref{Pcdfi} implies that for any $t_0>0$ and
$T>0$, the process $Y^\veps(n)$ converges
uniformly on the interval $[t_0,T]$ to $Y^\veps(\infty)$ as $n \to \infty$. 
Therefore by the monotonicity in~\eqref{Ymon} and the continuity of probability,
for all $k \in \bb{N}$, $t_1 <\ldots < t_k$, and $a_1, \ldots, a_k \in \bb{R}$ one has
\[
\liminf_{n \to \infty}\bb{P}(Y^\veps_{t_1}(n) >a_1, \ldots, Y^\veps_{t_k}(n) >a_k)
=\bb{P}(Y^\veps_{t_1}(\infty) >a_1, \ldots, Y^\veps_{t_k}(\infty) >a_k).
\]
The measurability follows by extending the above using Dynkin's $\pi$-$\lambda$ theorem.

Condition (iii) is a consequence of~\eqref{zintegralsense0} in Proposition~\ref{Pcdfi}. Indeed,
for any fixed $s>0$, if we let $(W_t: = B_{t + s} - B_{s}, t \geq 0)$
we have that almost surely  for any $t>0$
\[
\yyyz{\veps}{t + s}{\infty} = \yyyz{\veps}{s}{\infty}+ \int_{0}^{t}
  F_\veps(\yyyz{\veps}{u+s}{\infty}) \,\ud u + W_t.
\] 
Now, if we let $\mathbb{Y}_t: = \yyyz{\veps}{t + s}{\infty}$,
it follows that $(\mathbb{Y}_t, t \geq 0)$
solves the SDE~\eqref{gensde} with initial condition $\yyyz{\veps}{s}{\infty}$.
Furthermore, by Theorem~3.5 in~\cite[p.58]{Mao}, Equation~\eqref{gensde} is well-posed in $\bb{R}$ and,
by Lemma~\ref{L2bound}, $\yyyz{\veps}{s}{\infty} \in \bb{R}$ almost surely for any $s>0$.
Therefore, with the help of Theorem~9.1 in~\cite[p.86]{Mao} we conclude that
\[
\bb{P} (\yyyz{\veps}{t + s}{\infty}  \in A\vert Y^\veps_s(\infty) = y) = \bb{P}(\yyyz{\veps}{t }{y} \in A),
\]
which yields (iii) and concludes that the extended family is Markovian.

\subsection{\textbf{Continuity at infinity}}\label{cprop}
Let $\veps \in [0,1]$ be fixed. In this section we prove that the map
$x \mapsto Y_t^\veps(x)$ is continuous with respect to the total
variation distance for any $t>0$.
We first note that the map above is continuous in $\bb{R}$, see Theorem~1.3
in~\cite{donpensonzha16} and Theorem~1.1 in~\cite{Peng15} for a proof. Therefore, it only remains to verify the continuity at infinity. This is the content of the following lemma.
\begin{lemma}[\textbf{Continuity in total variation at infinity}]\label{Lem:Tvconv}
For any $\veps \in [0,1]$ and any $t>0$ it follows that
\begin{equation}\label{tvconv}
\lim_{x\to \infty} \dtv\prt{Y^\veps_{t}(x),
Y^\veps_{t}(\infty)}=0\qquad \textrm{ and }\qquad
\lim_{x\to -\infty}
\dtv\prt{Y^\veps_{t}(x), Y^\veps_{t}(-\infty)}=0.
\end{equation}
\end{lemma}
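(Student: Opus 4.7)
The plan is to reduce the total-variation continuity at $\pm \infty$ to the already-stated continuity of $y \mapsto Y^\veps_s(y)$ in total variation on $\bb{R}$ (Theorem~1.3 of~\cite{donpensonzha16} and Theorem~1.1 of~\cite{Peng15}, cited immediately above the lemma) by using the Markov property of the extended family, established in Subsection~\ref{mprop}, together with the monotone synchronous coupling at a small intermediate time. I treat only the case $x \to +\infty$; the case $x \to -\infty$ is entirely symmetric. Fix $\veps \in [0,1]$, $t>0$ and choose $\delta \in (0,t)$. Let $\mu^\veps_\delta$ be the synchronous coupling (both processes driven by the same Brownian motion) of $Y^\veps_\delta(x)$ and $Y^\veps_\delta(\infty)$. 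The Markov property combined with the disintegration inequality, Proposition~\ref{L:disintegration}, then yields
\[
\dtv\prt{Y^\veps_t(x),\, Y^\veps_t(\infty)} \;\leq\; \int_{\bb{R}^2} \dtv\prt{Y^\veps_{t-\delta}(y),\, Y^\veps_{t-\delta}(z)}\, \mu^\veps_\delta(\ud y,\ud z).
\]

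By the comparison relation~\eqref{Ymon} and Proposition~\ref{Pcdfi}, $Y^\veps_\delta(x)\nearrow Y^\veps_\delta(\infty)$ almost surely as $x \to \infty$, and by Lemma~\ref{L2bound} the limit $Y^\veps_\delta(\infty)$ is almost surely finite. Consequently, under the synchronous coupling, the pair $(Y^\veps_\delta(x), Y^\veps_\delta(\infty))$ converges pathwise to the diagonal point $(Y^\veps_\delta(\infty), Y^\veps_\delta(\infty)) \in \bb{R}^2$. The integrand
\[
(y,z) \mapsto \dtv\prt{Y^\veps_{t-\delta}(y),\, Y^\veps_{t-\delta}(z)}
\]
is bounded by $1$, vanishes on the diagonal, and is continuous at every diagonal point of $\bb{R}^2$ by the triangle inequality together with the cited total-variation continuity on $\bb{R}$. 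The dominated convergence theorem applied pathwise under the synchronous coupling then gives
\[
\lim_{x\to \infty}\int_{\bb{R}^2}\dtv\prt{Y^\veps_{t-\delta}(y),\, Y^\veps_{t-\delta}(z)}\, \mu^\veps_\delta(\ud y,\ud z) = 0,
\]
which is the desired conclusion.

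The heart of the argument, and the only genuinely nontrivial input, is the finiteness of $Y^\veps_\delta(\infty)$ for every $\delta>0$ and the total-variation continuity of the transition kernel on $\bb{R}$; both have already been secured (the first in Lemma~\ref{L2bound} via the ODE entrance argument of Lemma~\ref{comedowsl}, the second by the cited references). Everything else is a soft two-step reduction that turns an a.s.\ pathwise limit of initial conditions into a total-variation limit of marginals. The only care point is the choice of $\delta \in (0,t)$: one cannot take $\delta = 0$ (since then the coupling concentrates on $(x,\infty)$ and the integrand is not controlled), but any strictly positive $\delta$ works, which is exactly why the statement is restricted to $t>0$.
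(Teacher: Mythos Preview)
Your proof is correct and follows essentially the same strategy as the paper's: both disintegrate at an intermediate time $\delta\in(0,t)$ via the Markov property and Proposition~\ref{L:disintegration}, then exploit the a.s.\ convergence $Y^\veps_\delta(x)\to Y^\veps_\delta(\infty)$ under the synchronous coupling together with the total-variation continuity of $y\mapsto Y^\veps_{t-\delta}(y)$ on $\bb{R}$. The only difference is cosmetic: the paper truncates to a compact box $[-K,K]^2$, introduces an explicit modulus of continuity $\omega_{f,K}(\delta)$, and controls the tails via Chebyshev and Lemma~\ref{L2bound}, whereas you invoke the dominated convergence theorem directly on the bounded integrand $f(Y^\veps_\delta(x),Y^\veps_\delta(\infty))$ after observing its continuity at diagonal points. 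Your route is slightly cleaner; the paper's is more explicit but amounts to the same thing.
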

\begin{proof}
  We only prove the case for which $x \to + \infty$. The case when
  $x \to -\infty$ follows from the symmetry of $V^\veps$.
  Let $\mu^{\veps,x}_t$ be the measure in $\bb{R}^2$ defined by
\[
  \mu^{\veps,x}_t(\ud z_1,\ud z_2) = \bb{P}(Y^\veps_{t}(x)\in \ud z_1,
  Y^\veps_t(\infty) \in \ud z_2).
\]
For $s\in(0,t)$ we define $f: \bb{R}^2 \to [0,1]$ by
$f(z_1,z_2): = \dtv(Y^\veps_{t-s}(z_1), Y^\veps_{t-s}(z_2)) $. By the
Markovian property of the extended family
$\prt{\yyyz{\veps}{\cdot}{x}, x\in \overline{\bb{R}}}$ and Proposition~\ref{L:disintegration} for any $K>0$ we have that
\begin{equation}\label{desint}
  \begin{aligned}
     \dtv\prt{Y^\veps_{t}(x), Y^\veps_{t}(\infty)\,} &\leq
    \int_{\mathbb{R}^2} f(z_1,z_2)
    \mu^{\veps,x}_s(\ud z_1, \ud z_2)\\
    &\leq \int\limits_{|z_1|,|z_2|\leq K} f(z_1,z_2)
    \mu^{\veps, x}_s(\ud z_1, \ud z_2)\\
    &\qquad
    +\mathbb{P}(|Y^\veps_{t}(x)|>K) +
    \mathbb{P}(|Y^\veps_{t}(\infty)|> K).
  \end{aligned}
\end{equation}
By Lemma~\ref{L2bound} and Chebyshev's inequality it follows that
\begin{equation}\label{secmlim}
\limsup_{K \to \infty}\limsup_{x \to \infty}
\prt{\mathbb{P}(|Y^\veps_{t}(x)|>K) + \mathbb{P}(|Y^\veps_{t}(\infty)|> K)}=0.
\end{equation}
It suffices to show that for any $K>0$
\begin{equation}\label{TVr}
  \limsup_{x \to \infty}
  \int\limits_{|z_1|,|z_2|\leq K}
  f(z_1,z_2)
  \mu^{\veps,x}_s(\ud z_1, \ud z_2) =0.
\end{equation}
We now define for any $\delta>0$ 
\[\omega_{f,K}(\delta): = 
\max\{f(z_1,z_2)\colon \abs{z_1},\abs{z_2} \leq K,
\abs{z_1-z_2}\leq\delta \}.
\] 
Since $f$ is continuous and $f(z,z) = 0$, it follows that
\begin{equation}
  \label{eq:3}
  \omega_{f,K}(\delta)< \infty \qquad \text{ and } \qquad \lim_{\delta
    \to 0}\omega_{f,K}(\delta) = 0. 
\end{equation}
Given $\delta>0$, consider the following split of the integral in~\eqref{TVr},
\begin{equation}
  \label{eq:4}
  \begin{aligned}
     \int\limits_{|z_1|,|z_2|\leq K} f(z_1,z_2) \mu^{\veps,x}_s(\ud z_1, \ud
    z_2)&= \int\limits_{|z_1|,|z_2|\leq K} f(z_1,z_2)\;\Ind{\abs{z_1 -
        z_2}\leq\delta}\;
    \mu^{\veps,x}_s(\ud z_1, \ud z_2)\\
    &\qquad+ \int\limits_{|z_1|,|z_2|\leq K} f(z_1,z_2)\; \Ind{\abs{z_1
        - z_2}>\delta}\; \mu^{\veps,x}_s(\ud z_1, \ud z_2)\\
        &
    \leq \omega_{f,K}(\delta ) + \eta(x,\delta),
  \end{aligned}
\end{equation}
where
$\eta(x,\delta) : = \mu^{\veps,x}_s(\abs{z_1 - z_2} >\delta) =
\bb{P}(\abs{Y^\veps_s(x)- Y^{\veps}_s(\infty)} > \delta)$.  By~\eqref{extmon}, 
\[
\lim_{x \to \infty} \eta(x,\delta) = 0\quad  \textrm{ for any } \delta>0.
\]
To conclude the proof of Lemma~\ref{Lem:Tvconv} we note that, by~\eqref{eq:3}
\begin{equation}  \label{eq:5}
  \limsup_{x \to \infty}  \int_{|z_1|,|z_2|\leq K} f(z_1,z_2) 
  \mu^{\veps,x}_s(\ud z_1, \ud z_2) \leq \inf_{\delta>0} \omega_{f,K}(\delta) = 0.
\end{equation}
\end{proof}
  
\section{\textbf{Uniform bounds}}\label{apmore}
In this section we prove the bounds~\eqref{compactin}
and~\eqref{fdelta}.

\subsection{\textbf{Uniform entrance in a compact}}\label{AppUEC}
The bound~\eqref{compactin} is a consequence of the following proposition.
\begin{proposition}
  \label{P:compactin}
  For any  $\eta>0$ and  $a>0$ there are $b>0$ and $\delta \in (0, \eta)$ such that
  \begin{equation}\label{compactin2}
    \sup_{\veps \in  [0,1]} \bb{P}\prt{\yyyz{\veps}{\delta}{\infty} \notin [a,b]} \leq  \eta.
  \end{equation}
\end{proposition}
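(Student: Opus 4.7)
The plan is to split $\bb{P}\prt{Y^\veps_\delta(\infty) \notin [a,b]}$ into an upper-tail event $\{Y^\veps_\delta(\infty) > b\}$ and a lower-tail event $\{Y^\veps_\delta(\infty) < a\}$, and to control each uniformly in $\veps \in [0,1]$ by $\eta/2$, by choosing $b$ large and $\delta \in (0,\eta)$ small. The upper tail is immediate from Lemma~\ref{L2bound}: the bound $\sup_{\veps \in [0,1]} \bb{E}[Y^\veps_\delta(\infty)^2] \leq \widetilde{\psi}_\delta(\infty)<\infty$ combined with Markov's inequality yields $\bb{P}(Y^\veps_\delta(\infty) > b) \leq \widetilde{\psi}_\delta(\infty)/b^2 \leq \eta/2$ once $b$ is taken large enough (with $\delta$ already fixed).

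For the lower tail, I plan to compare $Y^\veps(\infty)$ with the deterministic flow $\varphi^\veps(\infty)$ satisfying $\dot{\varphi} = F_\veps(\varphi)$, $\varphi^\veps_0 = \infty$, whose existence follows from Lemma~\ref{comedowsl}. Separation of variables gives
\[
T^\veps(z) := \inf\{t \geq 0 : \varphi^\veps_t(\infty) = z\} = b_\veps^\alpha \int_{b_\veps z}^{\infty} \frac{\ud v}{V'(v)}.
\]
Hypothesis~\ref{hyp2} implies that $T^\veps(2a) \to (C_0\alpha (2a)^\alpha)^{-1}$ as $\veps \to 0$, while for $\veps \in (0,1]$ the map $\veps \mapsto T^\veps(2a)$ is continuous and strictly positive. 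Hence $T^* := \inf_{\veps \in [0,1]} T^\veps(2a) > 0$, and $\varphi^\veps_\delta(\infty) \geq 2a$ uniformly in $\veps$ provided $\delta < T^*$.

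The final ingredient is a coupling estimate between $Y^\veps$ and $\varphi^\veps$. Driving both flows by the same initial condition $R$ and drift $F_\veps$, set $\Delta_t := Y^\veps_t(R) - \varphi^\veps_t(R)$. Since $V$ is convex, $F_\veps$ is non-increasing, so $\Delta_t \prt{F_\veps(Y^\veps_t(R)) - F_\veps(\varphi^\veps_t(R))} \leq 0$. Applying It\^o's formula to $\Delta_t^2$, discarding this non-positive term, and using a standard localization of the It\^o integral $\int_0^t \Delta_s \ud B_s$ yield $\bb{E}[\Delta_t^2] \leq t$ for all $R>0$. Letting $R \to \infty$ and applying Fatou gives $\bb{E}\prt{(Y^\veps_\delta(\infty) - \varphi^\veps_\delta(\infty))^2} \leq \delta$, and Chebyshev together with $\varphi^\veps_\delta(\infty) \geq 2a$ yields
\[
\bb{P}(Y^\veps_\delta(\infty) < a) \leq \bb{P}\prt{|\varphi^\veps_\delta(\infty) - Y^\veps_\delta(\infty)| > a} \leq \delta/a^2.
\]
Choosing $\delta \in (0, \min\{\eta, T^*, a^2\eta/2\})$ and then $b$ large enough completes the argument. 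I expect the main obstacle to lie in establishing $T^* > 0$: one must verify that the prefactor $b_\veps^\alpha$ exactly compensates the divergence of $\int_{b_\veps \cdot 2a}^\infty 1/V'$ as $\veps \to 0$ by exploiting the local scaling in Hypothesis~\ref{hyp2}, and then glue the resulting limit with the continuity of $\veps \mapsto T^\veps(2a)$ on the compact interval $[0,1]$.
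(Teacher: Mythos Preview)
Your proposal is correct. The upper-tail bound via the uniform $L^2$ estimate and Chebyshev is exactly the paper's argument for~\eqref{largerthanb}. For the lower tail your route differs from the paper's: you couple $Y^\veps(\infty)$ with the deterministic flow $\varphi^\veps(\infty)$ starting from $\infty$, obtain the sharp $L^2$ bound $\bb{E}[(Y^\veps_\delta(\infty)-\varphi^\veps_\delta(\infty))^2]\leq \delta$ from convexity of $V$, and then use the uniform lower bound $T^*>0$ on the hitting time of level $2a$. The paper instead drops to a \emph{finite} starting point $x=2(a+D)$, uses the monotone comparison $Y^\veps_\delta(\infty)\geq Y^\veps_\delta(x)$, and argues pathwise on the event $\{\sup_{t\leq \delta}|B_t|\leq D\}$ via a stopping-time localization and the uniform bound $C(K):=\sup_{\veps\in[0,1]}\sup_{|y|\leq K}|F_\veps(y)|<\infty$ (which is~\eqref{uupperFe}, a direct consequence of Hypothesis~\ref{hyp2}).

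Both approaches ultimately rest on the same ingredient, the uniform compactness estimate~\eqref{uupperFe}. In fact your ``main obstacle'' $T^*>0$ is most cleanly dispatched by that same bound: writing $T^\veps(2a)=\int_{2a}^\infty \frac{\ud w}{-F_\veps(w)}\geq \int_{2a}^{4a}\frac{\ud w}{C(4a)}=\frac{2a}{C(4a)}>0$ uniformly in $\veps$, which avoids the limit-plus-continuity gluing you anticipated. Your coupling argument has the advantage of being fully quantitative (it delivers the explicit bound $\delta/a^2$), whereas the paper's pathwise argument is slightly more self-contained in that it never invokes the deterministic descent from infinity for the $\veps$-dependent flow.
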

The proof of Proposition~\ref{P:compactin} is based on the two following statements, whose proofs are given afterwards.
  \begin{align}\label{smallerthana}
    \text{  For any $a>0$, }\quad &
    \lim_{\delta \to 0} \sup_{\veps \in [0,1]}
    \mathbb{P}(\abs{Y^\veps_\delta(\infty)} \leq a) = 0.\\
    \label{largerthanb}
    \text{  For any $\delta>0$, }\quad &
    \lim_{b \to \infty} \sup_{\veps \in [0,1]}
    \mathbb{P}(\abs{Y^\veps_\delta(\infty)} >b) = 0. 
  \end{align}  
\begin{proof}[Proof of Proposition~\ref{P:compactin}]
Given $\eta>0$ and $a>0$, by~\eqref{smallerthana}
there is $\delta\in(0, \eta)$ such that
$\mathbb{P}(\abs{Y^\veps_\delta(\infty)} < a) < \eta/2$ for any
$\veps \in[0,1] $.
Next, by~\eqref{largerthanb}, we choose $b>0$ such that
$ \mathbb{P}(\abs{Y^\veps_\delta(\infty)} > b) < \eta/2$ for any
$\veps \in[0,1] $.
With this, we conclude that for every $\veps \in[0,1]$
\begin{equation}\label{conclude_ab}
\bb{P}(\abs{Y^\veps_\delta(\infty)} \notin [a,b]) =
\bb{P}(\abs{Y^\veps_\delta(\infty)} > b) +
\bb{P}(\abs{Y^\veps_\delta(\infty)} < a) < \eta.
\end{equation}
\end{proof}

In what follows we prove~\eqref{smallerthana} and~\eqref{largerthanb}.
\begin{proof}[Proof of~\eqref{smallerthana}.]
Fix any $a>0$. For $D>0$, let $\Omega(\delta, D) := \{\sup_{t \leq
  \delta} \abs{B_t} \leq D\}$. 
Fix $x := 2(a + D)$ and choose $K := 2x$. Now let $\sigma:
= \tau(a)\wedge \tau(K)$ where $\tau(v) = \tau(a,x,\veps ): =
\inf\{t>0:Y^\veps_t(x) = v\}$ for  $v \in \bb{R}$. 
Note that
$Y^\veps_\delta(\infty) \geq Y^\veps_\delta(x)$, and that almost surely
\begin{equation}\label{intesY}
  Y^\veps_{\delta \wedge\sigma}(x)  = x - \int_0^{\delta \wedge \sigma} \abs{F_\veps (Y^\veps_s(x))}\ud s  + B_{\delta\wedge  \sigma}.
\end{equation}
By~Hypothesis~\eqref{hyp2}  and~\eqref{fieldnota} it follows that
\begin{equation}\label{uupperFe}
C(K):=  \sup_{\veps \in [0,1]}\sup_{|y| \leq K}\abs{F_\veps(y)}=\sup_{\veps \in [0,1]}\sup_{|y| \leq K}\abs{\frac{V^{\prime}(b_\veps y)}{b^{1 + \alpha}_\veps}}  < \infty.
\end{equation} 
Given $D>0$, there is $\eta>0$ such that $\delta \in (0,\eta)$, implies
$\delta <\sigma$ on $\Omega(\delta, D)$. Indeed, by~\eqref{uupperFe}
there is $C=C(D)>0$ which allows~\eqref{intesY} to be bounded by
\begin{equation}\label{Y-downD-upD}
Y^\veps_{\delta \wedge \sigma}(x)  \geq x  -
  C(\delta\wedge \sigma) - D \geq x - D  -   C\delta 
  \qquad  \text{ and }  \qquad
  Y^\veps_{\delta \wedge \sigma}(x)  \leq x + D< K.
\end{equation}
Since $x - D>2a$, for any $ \delta\in (0, C^{-1}a)$ it follows
that $Y^\veps_{\delta \wedge \sigma}(x) \in (a,K)$. 
In conclusion, on  the event $\Omega(\delta,D)$,
we have that $ Y^\veps_{\delta}(\infty) \geq
Y^\veps_{\delta}(x)=Y^\veps_{\delta \wedge \sigma}(x) > a$. 
Since $\bb{P}(\Omega(\delta, D)) \to 1$ as $\delta \to 0$, the proof 
of~\eqref{smallerthana} is complete.
\end{proof}
 
\begin{proof}[Proof of~\eqref{largerthanb}]
We start the proof with uniform  $L^2$ bounds, that is, we
prove that for any $t>0$
\begin{equation}\label{uniformgepsba}
  \sup_{\veps \in[0,1]}  \mathbb{E}[|Y^\veps_{t}(x)|^2]<\infty.
\end{equation}
For any $x\in \mathbb{R}$ and $t\geq 0$, inequality~\eqref{varboundt} yields
\begin{equation}
\bb{E}\crt{|Y^\veps_{t}(x)|^2}\leq \widetilde{\psi}_t(x),
\end{equation} 
where $(\widetilde{\psi}_t(x),t\geq 0)$ is the solution of
\begin{equation}
\left\{
\begin{array}{r@{\;=\;}l}
    \frac{\ud}{ \ud t}\psi_t(x)
    &  \widetilde{G}(\psi_t(x)),\\
\psi_0(x) & x
\end{array}
\right.
\end{equation}
with $\widetilde{G}(y) := -2c_*|y|^{1+\alpha/2} + 1$ for all $y\in \mathbb{R}$ for some $c_*>0$.
The monotone convergence theorem  with the help of Lemma~\ref{comedowsl} implies
\begin{equation}\label{ecu:abt}
\bb{E}\crt{|Y^\veps_{t}(\infty)|^2}\leq \widetilde{\psi}_t(\infty)<\infty.
\end{equation}
To conclude~\eqref{largerthanb} note that \eqref{ecu:abt} yields,
\begin{equation}
\sup_{\veps \in [0,1]} \mathbb{P}(\abs{Y^\veps_\delta(\infty)} >b)\leq 
\frac{\widetilde{\psi}_t(\infty)}{b^2} \to 0
\quad \textrm{ as } \quad b \to \infty.
 \end{equation}
\end{proof}

\subsection{\textbf{Uniform convergence in total variation distance}}
\label{p:fdelta}
The bound~\eqref{fdelta} is a consequence of the following proposition.

\begin{proposition}\label{P:fdelta}
For any $t>0$ , $a>0$, $b>a$ and  $\eta>0$  there is  $\veps_0 >0$ for which
\begin{equation}\label{eq:6}
\sup_{0\leq \veps< \veps_0}\sup_{x \in [a,b]}\dtv\prt{Y^0_{t}(x), Y^{\veps}_{t}(x)}< \eta.
\end{equation}
\end{proposition}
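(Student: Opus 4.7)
The plan is to combine a truncation of the drift with a Girsanov/Pinsker comparison, using the compactness of $[a,b]$ for uniformity in $x$. Fix a smooth cutoff $\chi_K:\bb{R}\to [0,1]$ with $\chi_K\equiv 1$ on $[-K,K]$ and $\chi_K\equiv 0$ off $[-2K,2K]$, and set $F^K_\veps:=F_\veps\chi_K$. Since $F^K_\veps$ is smooth, bounded and compactly supported, the SDE obtained by replacing $F_\veps$ with $F^K_\veps$ in~\eqref{gensde} admits a unique strong solution $Y^{\veps,K}(x)$. Driving both equations by the same Brownian motion, pathwise uniqueness yields $Y^{\veps,K}_s(x)=Y^\veps_s(x)$ for $s\leq \tau^\veps_K(x):=\inf\{s\geq 0:|Y^\veps_s(x)|>K\}$, and similarly for $\veps=0$.

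The first step is to control $\tau^\veps_K$ uniformly. Starting from the It\^o expansion~\eqref{ito2y} and using Doob's submartingale inequality together with the uniform $L^2$ bound in Lemma~\ref{L2bound}, the computation that produced~\eqref{split2mom}--\eqref{split2Z} shows that
\[
\lim_{K\to\infty}\sup_{\veps\in[0,1]}\sup_{x\in[a,b]}\bb{P}\prt{\tau^\veps_K(x)\leq t}=0,
\]
and the analogous estimate holds for $\tau^0_K(x)$. Choose $K=K(\eta,a,b,t)$ large enough so that both probabilities are at most $\eta/3$. Because $Y^\veps$ and $Y^{\veps,K}$ agree on $\{\tau^\veps_K(x)>t\}$, this yields
\[
\sup_{\veps\in[0,1]}\sup_{x\in[a,b]}\dtv\prt{Y^\veps_t(x),Y^{\veps,K}_t(x)}\leq\eta/3,
\]
and the analogous bound for $\dtv\prt{Y^0_t(x),Y^{0,K}_t(x)}$.

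The second step compares the truncated laws. Let $\bb{P}^K_{\veps,x}$ denote the law of $Y^{\veps,K}(x)$ on the path space $C([0,t];\bb{R})$. Because $F^K_\veps$ and $F^K_0$ are bounded, Novikov's condition is trivially satisfied, and Girsanov's theorem gives
\[
\mathrm{KL}\prt{\bb{P}^K_{\veps,x}\,\|\,\bb{P}^K_{0,x}}=\tfrac{1}{2}\,\bb{E}\crt{\int_0^t\prt{F^K_\veps-F^K_0}^2\prt{Y^{\veps,K}_s(x)}\,\ud s}\leq \tfrac{t}{2}\,\Bigl(\sup_{|z|\leq 2K}|F_\veps(z)-F_0(z)|\Bigr)^2.
\]
Pinsker's inequality together with contraction of $\dtv$ under the projection $\omega\mapsto\omega(t)$ then yields
\[
\dtv\prt{Y^{\veps,K}_t(x),Y^{0,K}_t(x)}\leq \sqrt{\tfrac{t}{4}}\,\sup_{|z|\leq 2K}|F_\veps(z)-F_0(z)|,
\]
and by~\eqref{fieldcv} the right-hand side is smaller than $\eta/3$ once $\veps$ is small enough, independently of $x\in[a,b]$.

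Combining the three estimates via the triangle inequality for $\dtv$ produces~\eqref{eq:6}. The main obstacle is the uniform control of the exit probabilities $\bb{P}(\tau^\veps_K(x)\leq t)$, which must be uniform both in $\veps\in[0,1]$ and in $x\in[a,b]$; once this is in hand the Girsanov/Pinsker step is immediate, since after truncation the drift difference is bounded by the uniform quantity appearing in~\eqref{fieldcv} and the dependence on the initial condition has been absorbed into the choice of the compact $[-2K,2K]$.
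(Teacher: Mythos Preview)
Your proof is correct and takes a genuinely different route from the paper's. The paper invokes the Kabanov--Liptser--Shiryaev bound
\[
\dtv\prt{Y^0_{t}(x), Y^{\veps}_{t}(x)}^2 \leq 2\int_0^t \bb{E}\crt{\abs{F_0(Y^0_s(x))-F_\veps(Y^\veps_s(x))}^2}\ud s,
\]
and then must control an integrand where the two fields are evaluated at \emph{different} processes. This forces a split on the event $A_{M,\veps}=\{\sup_{s\le t}|Y^0_s|\vee|Y^\veps_s|\le M\}$: on the good event a Gr\"onwall argument on $\Delta^\veps_s=Y^\veps_s-Y^0_s$ reduces matters to $\sup_{|z|\le M}|F_\veps-F_0|$, while on the bad event the paper needs Cauchy--Schwarz and an eighth-moment bound on $F_\veps(Y^\veps_s)$, obtained via the exponential Lyapunov function $H(z)=e^{z^2}$. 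It is precisely here that the growth control~\eqref{Ge} of Hypothesis~\ref{hyp4} enters.

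Your truncation--Girsanov--Pinsker argument bypasses all of this. After truncating, the drift difference is \emph{globally} bounded by $\sup_{|z|\le 2K}|F_\veps-F_0|$, so the KL bound is immediate and the only remaining task is the uniform exit-probability estimate, which follows from the elementary It\^o bound $|Y^\veps_s(x)|^2\le x^2+s+M^x_s$ (valid because $yF_\veps(y)\le 0$) together with Doob's inequality---you do not even need the full strength of Lemma~\ref{L2bound}, since $x\in[a,b]$ gives $\bb{E}[|Y^\veps_s(x)|^2]\le b^2+t$ directly. In particular your argument never uses~\eqref{Ge}, so it shows that this part of Hypothesis~\ref{hyp4} is not needed for Proposition~\ref{P:fdelta}. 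The paper's approach is closer to a direct ``stability in drift'' estimate, while yours trades the coupling of the two dynamics for a measure-change on the truncated problem; the latter is shorter and requires weaker hypotheses here.
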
 
\begin{proof}
By Theorem~5.1 in~\cite{KabLipShi86}, we have
\begin{equation}\label{eq:27}
\abs{\dtv\prt{Y^0_{t}(x), Y^{\veps}_{t}(x)}}^2
\leq 2 \int_0^{t}\bb{E}\crt{\abs{F_0(Y^0_{s}(x)) -F_\veps(Y^\veps_{s}(x))}^2 }\ud s. 
\end{equation}
To conclude~\eqref{eq:6} we show that
\begin{equation}
  \label{eq:10}
  \lim_{ \veps_0 \to 0}
  \sup_{0\leq \veps< \veps_0}
  \sup_{x \in [a,b]}\sup_{s \in [0,t]}
  \bb{E}\crt{\abs{F_0(Y^0_{s}(x))-F_\veps(Y^\veps_{s}(x))}^2 }
  =0.
\end{equation}
First, we define the event 
\begin{equation}\label{AM}
A_{M,\veps} = A_{M,\veps}(x,t) := \Big\{\sup_{s \in [0,t]} \vert Y^0_{s}(x)\vert
\vee \abs{Y^\veps_{s}(x)} \leq M\Big\}.
\end{equation}
Now,  for any $M>0$, we may write the expectation term in~\eqref{eq:10} as
\begin{equation}
  \label{eq:53}
  \begin{aligned}
    \bb{E}\crt{\abs{F_0(Y^0_{s}(x))-F_\veps(Y^\veps_{s}(x))}^2
      \Ind{A_{M,\veps}}}
    +\bb{E}\crt{\abs{F_0(Y^0_{s}(x))-F_\veps(Y^\veps_{s}(x))}^2
      \Ind{A^c_{M,\veps}}} .
  \end{aligned}
\end{equation}
Since $M>0$ is arbitrary, to prove~\eqref{eq:10} it suffices to show
that for any $M>0$
\begin{equation}
  \label{eq:54}
  \lim_{ \veps_0 \to 0}
  \sup_{0\leq \veps< \veps_0}
  \sup_{x \in [a,b]}\sup_{s \in [0,t]}
  \bb{E}\crt{\abs{F_0(Y^0_{s}(x))-F_\veps(Y^\veps_{s}(x))}^2
  \Ind{A_{M,\veps}}} =0
\end{equation}
and that 
\begin{equation}
  \label{eq:55}
  \lim_{M \to \infty}
  \sup_{ \veps \in [0, 1]}
  \sup_{x \in [a,b]}\sup_{s \in [0,t]}
  \bb{E}\crt{\abs{F_0(Y^0_{s}(x))-F_\veps(Y^\veps_{s}(x))}^2\Ind{A^c_{M,\veps}}} =0.
\end{equation}

\noindent\textbf{Proof of~\eqref{eq:54}.}
Let $\Delta^\veps_s(x):=Y^\veps_s(x) - Y^0_s(x)$.
We recall that for each $\veps \in [0,1]$ the process
$Y^\veps(x) = (Y_t^\veps(x), t \geq 0)$ 
solves~\eqref{gensde}. Moreover, the processes $(Y^\veps(x), \veps \in
[0,1])$ are coupled with the same noise,
i.e., there is $(B_t, t \geq 0)$ a Brownian motion under $\bb{P}$ such
that for all $\veps \in [0,1]$ we have almost surely that
\begin{equation}
  \label{couple}
  Y^\veps_{t}(x)  = x + \int_0^{t}F_\veps
    (Y^\veps_s(x))\ud s  + B_t \quad \text{ for all }\quad t >0.
\end{equation}
Now, we may use~\eqref{couple}
to write 
\[
  \Delta^\veps_s(x) = Y^\veps_s(x) - Y^0_s(x) =
  \int_0^s [F_\veps(Y^\veps_u(x)) - F_0(Y^0_u(x))]\, \ud u.
\]
By the mean value theorem we have 
\begin{equation}\label{difprocas}
  \begin{aligned}
    \Delta^\veps_s(x) &= \int_0^s [F_\veps(Y^\veps_u(x)) -F_\veps( Y^0_u(x))- (F_0(Y^0_u(x))-F_\veps( Y^0_u(x))] \, \ud u\\
    &  = \int_0^s [F^{\prime}_\veps(\Theta^\veps_u)\Delta^\veps_u(x) - (F_0(Y^0_u(x))-F_\veps( Y^0_u(x))] \, \ud u,\\
  \end{aligned}
\end{equation}
where
$\Theta^\veps_u \in \big(Y^\veps_u(x)\wedge Y^0_u(x) , Y^\veps_u(x)\vee Y^0_u(x)
\big)$ for all $u \geq 0$.  By the convexity of $V$, it follows that
$F^{\prime}_\veps(\Theta^\veps_u) \leq 0$. By the chain rule and the fact that
$\abs{x} \leq 1 + x^2 $ for all $x \in \bb{R}$ we have that
\begin{equation}
  \label{eq:7}
  \begin{aligned}
    |\Delta^\veps_s(x)|^2 &= |\Delta^\veps_0(x)|^2
    + \int_0^s 2 \Delta^\veps_u(x) \ud \Delta^\veps_u(x)\\  
    &= 2\int_0^s [F^{\prime}_\veps(\Theta^\veps_u)|\Delta^\veps_u(x)|^2 - \Delta^\veps_u(x)(F_0(Y^0_u(x))-F_\veps( Y^0_u(x))] \, \ud u\\
    & \leq 2 \int_0^s (1 + |\Delta^\veps_u(x)|^2 )
    \abs{F_0(Y^0_u(x))-F_\veps( Y^0_u(x))}\, \ud u.
  \end{aligned}
\end{equation}
If we let
$\psi_s^\veps(x): = \sup_{u \in [0,s]} |\Delta^\veps_s(x)|^2\Ind{A_{M,\veps}}$
and 
$K(M, \veps): = \sup_{\abs{z} \leq M} \abs{F_0(z) -
  F_\veps(z)} $
 we obtain that $ \psi_s^\veps(x) \leq  2 K(M,\veps)\int_0^s (1 +
 \psi_u^\veps(x)) \ud u $ for any $x \in \bb{R}$.
Now, for any fixed $M>0$, by Hypothesis~\ref{hyp2} we have $K(M,\veps) \to 0$ as $\veps \to 0$.
This implies that for any $\eta>0$ and $M>0$
there is $\veps_0$ such that 
$\sup_{\veps \in [0,\veps_0]}  \sup_{x \in \bb{R}}\psi_t^\veps(x) \leq \eta$.
This completes the proof of~\eqref{eq:54}.

\noindent\textbf{Proof of~\eqref{eq:55}.}
Since $(r_1 + r_2)^2 \leq 2 \prt{r_1^2 + r_2^2}$ for any
$r_1, r_2 \in \bb{R}$, the expectation in~\eqref{eq:55} can be bounded
by
\[
2 \bb{E}\crt{\abs{F_0(Y^0_{s}(x))}^4\Ind{A^c_{M,\veps}(x,s)}} +
  2\bb{E}\crt{\abs{F_\veps(Y^\veps_{s}(x))}^4 \Ind{A^c_{M,\veps}(x,s)}},
\]
it remains to show that
\begin{equation}
  \label{eq:56}
\lim_{M \to \infty}   \sup_{\veps \in [0, 1]}
   \sup_{x \in [a,b]}\sup_{s \in [0,t]}
\bb{E}\crt{\abs{F_\veps(Y^\veps_{s}(x))}^4 \Ind{A^c_{M,\veps}(x,s)}} =0.
\end{equation}
By Cauchy--Schwarz inequality, we have
\begin{equation}
  \label{eq:59}
  \bb{E}\crt{\abs{F_\veps(Y^\veps_{s}(x))}^4 \Ind{A^c_{M,\veps}}} \leq \prt{\bb{E}\crt{\abs{F_\veps(Y^\veps_{s}(x))}^8}}^{1/2} \cdot \prt{\bb{P}\prt{A^c_{M,\veps}}}^{1/2}.
\end{equation}
We now claim that
\begin{equation}
  \label{F-Y-eps-8-bound}
    \sup_{\veps \in [0, 1]}\sup_{x \in [a,b]}\sup_{s \in [0,t]}
  \bb{E}\crt{\abs{F_\veps(Y^\veps_{s}(x))}^8} < \infty,
\end{equation}
and that, recall~\eqref{AM}, for any fixed $t>0$
\begin{equation}\label{eq:9}
 \sup_{\veps \in [0, 1]}\sup_{x \in [a,b]}
  \bb{P}(A^c_{M,\veps}(x,t)) \to 0\quad  \text{ as } \quad  M \to \infty.
\end{equation}
From~\eqref{eq:59},~\eqref{F-Y-eps-8-bound} and~\eqref{eq:9} we conclude~\eqref{eq:55}.
It remains to prove~\eqref{F-Y-eps-8-bound} and~\eqref{eq:9}.

\noindent\textbf{Proof of~\eqref{F-Y-eps-8-bound}.}
We first note that Hypothesis~\ref{hyp2} and 
Hypothesis~\ref{hyp4} (Condition~\eqref{Ge}) imply that there is
$\tilde{c}>0 $ such that 
 $\abs{V^{\prime}(z)}
\leq \tilde{c} |z|^{1+\alpha}\exp(z^2)$
for any $z\in \bb{R}$.
Therefore
\begin{equation}
  \label{eq:62}
  \begin{aligned}
 \bb{E}\crt{\abs{F_\veps(Y^\veps_{s}(x))}^8 }&=
  \bb{E}\crt{\abs{V^{\prime}(b_\veps Y^\veps_s(x))/b^{1+\alpha}_\veps}^8}\\
  &\leq  \tilde{c}^8\bb{E}\crt{\abs{Y^\veps_s(x)}^{8(1 + \alpha)}\abs{1 +
    \exp(8 b_\veps \abs{Y^\veps_s(x)}^2)}}.   
  \end{aligned} 
\end{equation}
Now, since $b_\veps \to 0$ as $\veps \to 0$ and $|z|^{1 + \alpha} \leq \exp(z^2)$ for all  $z\in \mathbb{R}$, there is $\tilde{C}>0$ for
which
\begin{equation}
  \label{bound-exp}
   \bb{E}\crt{\abs{F_\veps(Y^\veps_{s}(x))}^8 }   \leq  \tilde{C}\bb{E}\crt{\exp(\abs{Y^\veps_s(x)}^2)}.   
\end{equation}
To conclude, we now show that
\begin{equation}
  \label{eq:52}
\sup_{\veps \in [0, 1]}
\sup_{x \in [a,b]}\sup_{s \in [0,t]}
\bb{E}\crt{\exp(\abs{Y^\veps_s(x)}^2)}
= C(t,a,b)< \infty.
\end{equation}
Indeed, by It\^o's formula for $H(z) = \exp(z^2)$, $z\in \mathbb{R}$, we have that
\begin{equation}
  \label{eq:61uno}
  H(Y^\veps_{s}(x)) =  H(x)+ \int_0^s
  H(Y^\veps_{u}(x))\big(2Y^\veps_{u}(x)F_\veps(Y^\veps_{u}(x)) +
  2(Y^\veps_{u}(x))^2 +  1\big) \ud u + M_s,
\end{equation}
where $(M_s, s \geq 0)$ is a local martingale. Recall that $G_\veps (z)
=zF_\veps(z)$ for all $z \in \bb{R}$. 
By~\eqref{ec:Ger} 
we deduce that 
\begin{equation}\label{eq:63}
  \sup_{\veps \in [0, 1]}
 \sup_{z \in \bb{R}} \prt{2zF_\veps(z) + 2 z^2 + 1}  = C < \infty.
\end{equation}
Therefore, if we let $\tau_K = \tau(K, \veps, x) : = \inf\{s>0\colon
\abs{Y^\veps_s(x)} > K \}$ we obtain 
\begin{equation}
  \label{eq:61}
  \bb{E}\crt{H(Y^\veps_{s\wedge\tau_K}(x))} \leq  H(x)+ C\int_0^{s}
 \bb{E}\crt{H(Y^\veps_{u\wedge\tau_K}(x))} \ud u.
\end{equation}
Now, by Gr\"onwall's inequality we obtain for $x \in [a,b]$ and $s \in
[0,t]$ that
\begin{equation}
  \label{eq:17}
  \bb{E}\crt{H(Y^\veps_{s\wedge\tau_K}(x))} \leq  H(x) \exp(Cs)
  \leq\big( H(a) + H(b) \big) \exp(Ct).
\end{equation}
Since the constant $C$ in~\eqref{eq:61} does not depend on $\veps$,  Fatou's lemma implies
\begin{equation}
  \label{eq:68}
  \begin{aligned}
    \sup_{\veps \in [0, 1]}\sup_{x \in [a,b]}\sup_{s\in[0,t]}
    \bb{E}\crt{\exp(\abs{Y^\veps_s(x)}^2)}&=\sup_{\veps \in [0,
      1]}\sup_{x \in [a,b]}\sup_{s\in[0,t]} 
    \bb{E}\crt{\liminf_{K \to \infty}H\big(Y^\veps_{s\wedge
        \tau_K}(x)\big)}\\
 & \leq \sup_{\veps \in [0, 1]}\sup_{x \in [a,b]}\sup_{s\in[0,t]}\liminf_{K \to \infty}     
    \bb{E}\crt{H(Y^\veps_{s\wedge\tau_K}(x))}\\
& \leq\big( H(a) + H(b)\big)\exp( Ct),
  \end{aligned}
\end{equation}
which yields~\eqref{eq:52}.
This completes the proof of~\eqref{F-Y-eps-8-bound}.

\noindent\textbf{Proof of~\eqref{eq:9}.}
Since $zF_\veps(z) = G_\veps(z)\leq 0$ for all $z \in \bb{R}$, it
follows from~\eqref{ito2y} that 
\begin{equation}
  \label{eq:70}
  \sup_{ s \in [0,t] }|Y^\veps_{s}(x)|^2 \leq x^2 + t 
 + \sup_{ s \in [0,t] } \abs{M^{\veps, x}_s},
\end{equation}
where $M^{\veps, x} =(M^{\veps,x}_s, s\geq 0)$ is the local martingale
given by~\eqref{local}.
Therefore, for any $x \in [a,b]$ and any $M$ such that $M> a^2 + b^2 + t$ 
\begin{equation}
  \label{eq:71}
  \begin{aligned}
    \bb{P}\prt{\sup_{ s \in [0,t] }|Y^\veps_{s}(x)|^2>M}
    &\leq
    \bb{P}\prt{x^2 + t + \sup_{ s \in [0,t] } \abs{M^{\veps, x}_s}> M}\\
    &=
    \bb{P}\prt{\sup_{ s \in [0,t]}|M^{\veps, x}_s|^2>(M-x^2 -t)^2}\\  
    &\leq \frac{\bb{E}\crt{|M^{\veps, x}_t|^2}}{(M-x^2 -t)^2} 
    = \frac{C_{\veps,t,x}}{(M-x^2 -t)^2},
  \end{aligned}
\end{equation}
where the second inequality follows from Doob's $L^2$ submartingale
inequality and by It\^o's isometry,  $C_{\veps,t,x} := 4\int_0^t
\bb{E}\crt{|\yyyz{\veps}{s}{x}|^2} \ud s$. Now, since $z^2 \leq
\exp(z^2)$ for all $z \in \bb{R}$, 
by~\eqref{eq:68} it follows that
\begin{equation}
  \label{eq:73}
  \sup_{\veps \in [0, 1]}\sup_{x \in [a,b]}\sup_{s \in [0,t]}
  C_{\veps,t,x} = C'(t,a,b) < \infty.
\end{equation}
Now, recall the definition in~\eqref{AM}. Since $x^2 \leq a^2 + b^2$ for any $x \in [a,b]$, it follows that for any $M$ such that $M> a^2 + b^2 + t$
\begin{equation}
  \label{eq:72}
\begin{aligned}
 \sup_{\veps \in [0, 1]}\sup_{x \in [a,b]}
 \bb{P}(A^c_{M,\veps}(x,t) ) \leq
  \frac{C'(t,a,b)}{(M-a^2 - b^2 -t)^2} 
\end{aligned}
\end{equation}
and so, we obtain~\eqref{eq:9} and thereby conclude the proof of
Proposition~\ref{P:fdelta}.
\end{proof} 
\section{\textbf{Convergence of invariant measures: details}}
\label{app:convergence-invariant-measure}
Recall the notation introduced above~\eqref{tv_scale_inv}, that is,
$Y_\infty\stackrel{d}= \nu$, $X^\veps_\infty \stackrel{d}=\mu^\veps$
and $\widetilde{X}^\veps_\infty \stackrel{d}=\widetilde{\mu}^\veps$.
By Lemma~\ref{medidainvariante} it follows that
\begin{equation}\label{densityY-app}
\nu(\ud z)=C^{-1}\exp\big(-2 V_0(z)\big)\ud z,
\end{equation}
where $C$ is a normalization constant, $V_0(z):=(2 +
\alpha)^{-1}C_0|z|^{2+\alpha}$ with $\alpha$ and $C_0$ defined in
Hypothesis~\ref{hyp2}. 
Similarly,
 $\mu^\veps$ and
$\widetilde{\mu}^\veps$ is the density of $X^{\veps}_\infty$ and
$\widetilde{X}^{\veps}_\infty$
given by,
 and they are given by 
\begin{equation}\label{invariant-formula}
\mu^\veps(\ud z)=C_{\veps}^{-1}
\exp\left(-2\frac{V(z)}{\veps}\right)\ud z\qquad \textrm{ and }\qquad
\widetilde{\mu}^\veps(\ud z)={\widetilde{C}_{\veps}^{-1}}\exp\left(-2\frac{\widetilde{V}(z)}{\veps}\right)\ud z.
\end{equation} 
respectively. For $(b_\veps,\veps\in [0,1))$ as defined
in~\eqref{abscales}, the same argument leading to \eqref{eq:cambio}
yields 
that the density of
$\frac{X^{\veps}_\infty}{b_{\veps}}$
and $Y^\veps_\infty=\frac{\widetilde{X}^{\veps}_\infty}{b_{\veps}}$
is given by 
\begin{equation}\label{eq:cambio-app}
b_{\veps}{C_{\veps}^{-1}}\exp\left(-2\frac{V(b_{\veps}z)}{\veps}\right)\ud
z
\qquad \textrm{ and } \qquad
b_{\veps}{\widetilde{C}_{\veps}^{-1}}\exp\left(-2\frac{\widetilde{V}(b_{\veps}z)}{\veps}\right)\ud z.
\end{equation} 

\begin{lemma}[Asymptotic coupling of the invariant measures]\label{lem:couplingINV}
  For each $\veps>0$, let $X^\veps_\infty$ and
  $\widetilde{X}^\veps_\infty$ be the random variables whose
    distributions are the invariant measures
  of the SDE given by \eqref{modelo} and \eqref{modelote}, 
  respectively.
  Assume that the potential $V$ of \eqref{modelo} satisfies
  Hypotheses~\ref{hyp1}, \ref{hyp2}, and \ref{hyp3} and assume that
  the potential $\widetilde{V}$ of \eqref{modelote} satisfies
  Hypotheses~\ref{hyp1}, \ref{hyp2}, and \ref{hyp4}.  Under those
  assumptions,
it follows that
\begin{equation}\label{eq:limitesm}
\lim\limits_{\veps\to 0}\dtv\left(\frac{X^\veps_\infty}{b_\veps},Y_\infty\right)=0\qquad \textrm{ and }\qquad
\lim\limits_{\veps\to 0}\dtv\left(\frac{\widetilde{X}^\veps_\infty}{b_\veps},Y_\infty\right)=0,
\end{equation}
where $(b_\veps,\veps\in [0,1))$ is defined in~\eqref{abscales}.
In particular,
  \begin{equation}
    \label{eq:particular}
    \lim\limits_{\veps\to
      0}\dtv(X^\veps_\infty,\widetilde{X}^\veps_\infty)
    =\lim\limits_{\veps\to 0}\dtv(\mu^\veps,\widetilde{\mu}^\veps)=0.
\end{equation}
\end{lemma}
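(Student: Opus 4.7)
The plan is to apply Scheff\'e's lemma to the rescaled densities in \eqref{eq:cambio-app} against the target density \eqref{densityY-app}, which reduces the first two limits in \eqref{eq:limitesm} to pointwise convergence of these densities on $\bb{R}$. The third claim \eqref{eq:particular} will then follow from the scale invariance $\dtv(\mu^\veps,\widetilde\mu^\veps)=\dtv(X^\veps_\infty/b_\veps,\widetilde X^\veps_\infty/b_\veps)$ together with the triangle inequality
\begin{equation*}
\dtv\prt{\frac{X^\veps_\infty}{b_\veps},\frac{\widetilde X^\veps_\infty}{b_\veps}}\leq \dtv\prt{\frac{X^\veps_\infty}{b_\veps},Y_\infty}+\dtv\prt{Y_\infty,\frac{\widetilde X^\veps_\infty}{b_\veps}}.
\end{equation*}
The problem is therefore reduced to (a) pointwise convergence of the exponent $V(b_\veps z)/\veps\to V_0(z)$, and (b) convergence of the normalizing constants $C_\veps/b_\veps\to C$, with the analogous statements for $\widetilde V$.

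For (a), I will write $V(b_\veps z)=b_\veps\int_0^z V'(b_\veps u)\,\ud u$, divide by $\veps=b_\veps^{2+\alpha}$, and invoke Hypothesis~\ref{hyp2} in the equivalent form \eqref{LB-k} with $\lambda=b_\veps$ on the compact $\{|u|\leq|z|\}$. This yields uniform convergence of $V'(b_\veps u)/b_\veps^{1+\alpha}$ to $C_0|u|^{1+\alpha}\sgn(u)$, and integrating recovers $V_0(z)=C_0|z|^{2+\alpha}/(2+\alpha)$. The same argument applies to $\widetilde V$, which satisfies Hypothesis~\ref{hyp2} by the construction \eqref{Vcoup}.

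For (b), the change of variables $y=b_\veps z$ rewrites $C_\veps/b_\veps=\int_{\bb{R}} e^{-2V(b_\veps z)/\veps}\,\ud z$, and the goal is to show this converges to $\int_{\bb{R}} e^{-2V_0(z)}\,\ud z=C$. I would partition $\bb{R}$ into a local regime $\{|b_\veps z|\leq L\}$, an intermediate regime $\{L<|b_\veps z|\leq R_1\}$, and a tail $\{|b_\veps z|>R_1\}$ for suitable constants $0<L<R_1$. On the local regime, integrating the lower bound from Hypothesis~\ref{hyp2} gives $V(b_\veps z)/\veps\geq \tfrac{C_0}{2(2+\alpha)}|z|^{2+\alpha}$ for $\veps$ small, furnishing the $\veps$-uniform integrable majorant $e^{-c|z|^{2+\alpha}}$, and dominated convergence delivers the limit $C$ for this piece. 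On the tail, Hypothesis~\ref{hyp3} combined with convexity and evenness gives $V(b_\veps z)/\veps\geq c_1 b_\veps^{\beta-\alpha}|z|^{2+\beta}$ for $|b_\veps z|>R_1$; the analogous argument using \eqref{GC} for $\widetilde V$ yields directly $\widetilde V(b_\veps z)/\veps\geq \tilde c_1|z|^{2+\alpha}$ since the local and asymptotic exponents of $\widetilde V$ match, whereas for $V$ a case split on the sign of $\beta-\alpha$ produces a $\veps$-uniform integrable bound by either factoring $(b_\veps|z|)^{\beta-\alpha}\geq R_1^{\beta-\alpha}$ when $\beta\geq\alpha$ or exploiting $b_\veps^{\beta-\alpha}\to\infty$ when $\beta<\alpha$. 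Since the indicator of the tail vanishes pointwise as $\veps\to 0$, this contribution vanishes. The main obstacle is the intermediate regime, which admits no useful $\veps$-uniform pointwise dominating function; I anticipate handling it directly by monotonicity of $V$ on $[0,\infty)$, which gives $V(b_\veps z)\geq V(L)$ there and so bounds the intermediate integral by $2(R_1-L)b_\veps^{-1}\,e^{-2V(L)/\veps}$, vanishing as $\veps\to 0$ because $b_\veps^{-1}$ is polynomial in $\veps^{-1}$ while $e^{-2V(L)/\veps}$ is exponentially small. Once (a) and (b) are in place, Scheff\'e's lemma \cite[Lem.~3.3.2, p.~95]{Reiss} yields the first two limits in \eqref{eq:limitesm}, and the triangle inequality above concludes \eqref{eq:particular}.
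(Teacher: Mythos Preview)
Your proposal is correct and follows essentially the same route as the paper: reduce via Scheff\'e's lemma to pointwise convergence of the rescaled densities, which splits into (a) $V(b_\veps z)/\veps\to V_0(z)$ by integrating Hypothesis~\ref{hyp2} (the paper's Lemma~\ref{claim1in}) and (b) $C_\veps/b_\veps\to C$ via lower bounds on $V$ pieced together from Hypothesis~\ref{hyp2} near zero and Hypothesis~\ref{hyp3}/\ref{hyp4} at infinity with the same case split on the sign of $\beta-\alpha$ (the paper's Lemma~\ref{lem:constantes}). Your three-regime decomposition in the variable $|b_\veps z|$ is a cosmetic variant of the paper's decomposition $\{|z|\leq K\}\cup\{|z|>K\}$ with $K\to\infty$; in particular your intermediate regime is absorbed in the paper's argument by extending the near-zero bound $V(w)\geq c|w|^{2+\alpha}$ to a fixed compact via positivity and compactness.
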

The following two lemmas will be instrumental for the proof of Lemma \ref{lem:couplingINV}.\begin{lemma}[Uniform convergence of the potentials]\label{claim1in}
Under the same hypotheses of Lemma \ref{lem:couplingINV}, for any $K>0$ it follows that
\begin{equation}\label{approachV}
\lim\limits_{\veps \to 0}
\sup\limits_{|z|\leq K}\left|\frac{V(b_{\veps} z)}{\veps}-V_0(z) \right|=0
\end{equation}
and 
\begin{equation}\label{approachVtilde}
\lim\limits_{\veps \to 0}
\sup\limits_{|z|\leq K}\left|\frac{\widetilde{V}(b_{\veps} z)}{\veps}-V_0(z) \right|=0.
\end{equation}
where $V_0(z) =(2 + \alpha)^{-1}C_0|z|^{2+\alpha}$ for any $z\in \mathbb{R}$ with $\alpha$ and $C_0$ defined in Hypothesis~\ref{hyp2}.
\end{lemma}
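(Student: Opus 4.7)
The plan is to reduce the claim about the potentials to the hypothesis on their derivatives by writing both $V$ and $V_0$ as integrals of their derivatives, exploiting $V(0)=V_0(0)=0$ (and $\widetilde V(0)=0$), and then using the scaling $b_\veps^{2+\alpha}=\veps$.

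First, fix $K>0$ and observe that $V_0'(u)=C_0|u|^{1+\alpha}\sgn(u)$, so by the fundamental theorem of calculus
\begin{equation}
V_0(z)=\int_0^z C_0|u|^{1+\alpha}\sgn(u)\,\ud u\quad\textrm{for all }z\in\mathbb{R}.
\end{equation}
Since $V(0)=0$, a change of variable $s=b_\veps u$ gives
\begin{equation}
V(b_\veps z)=\int_0^{b_\veps z}V'(s)\,\ud s=b_\veps\int_0^z V'(b_\veps u)\,\ud u.
\end{equation}
Using $b_\veps=\veps^{1/(2+\alpha)}$, i.e. $\veps=b_\veps^{2+\alpha}$, we obtain
\begin{equation}
\frac{V(b_\veps z)}{\veps}=\frac{1}{b_\veps^{1+\alpha}}\int_0^z V'(b_\veps u)\,\ud u=\int_0^z\frac{V'(b_\veps u)}{b_\veps^{1+\alpha}}\,\ud u.
\end{equation}

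Next, subtract the corresponding integral representation of $V_0$ to get the pointwise identity
\begin{equation}
\frac{V(b_\veps z)}{\veps}-V_0(z)=\int_0^z\left[\frac{V'(b_\veps u)}{b_\veps^{1+\alpha}}-C_0|u|^{1+\alpha}\sgn(u)\right]\ud u.
\end{equation}
Taking absolute values and then the supremum over $|z|\leq K$ yields
\begin{equation}
\sup_{|z|\leq K}\left|\frac{V(b_\veps z)}{\veps}-V_0(z)\right|\leq K\cdot\sup_{|u|\leq K}\left|\frac{V'(b_\veps u)}{b_\veps^{1+\alpha}}-C_0|u|^{1+\alpha}\sgn(u)\right|.
\end{equation}
Since $b_\veps\to 0$ as $\veps\to 0$, the right-hand side tends to $0$ by the equivalent form~\eqref{LB-k} of Hypothesis~\ref{hyp2} (applied with $\lambda=b_\veps$), which proves~\eqref{approachV}.

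For~\eqref{approachVtilde}, recall that $\widetilde V$ also satisfies Hypothesis~\ref{hyp1} and Hypothesis~\ref{hyp2}, as noted after the statement of Hypothesis~\ref{hyp4}; in particular $\widetilde V(0)=0$. Consequently the identical argument applies verbatim with $V$ replaced by $\widetilde V$. Alternatively, one may use the stronger fact that $\widetilde V\equiv V$ on $[-L,L]$: for $|z|\leq K$ and $\veps$ small enough that $b_\veps K\leq L$, we have $\widetilde V(b_\veps z)=V(b_\veps z)$, so~\eqref{approachVtilde} is an immediate consequence of~\eqref{approachV}. There is no genuine obstacle here; the argument is essentially an integrated form of Hypothesis~\ref{hyp2}, and the only point to be careful about is the correct choice of the exponent $b_\veps^{1+\alpha}=\veps/b_\veps$ coming from the scaling system~\eqref{scale_sys}.
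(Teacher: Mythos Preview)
Your proof is correct and follows essentially the same approach as the paper: both arguments integrate the derivative estimate from Hypothesis~\ref{hyp2} (in its form~\eqref{LB-k}) from $0$ to $z$, use $V(0)=0$ together with the scaling relation $b_\veps^{2+\alpha}=\veps$, and then bound the supremum over $|z|\le K$ by $K$ times the supremum of the integrand. Your presentation is slightly more explicit about the integral identity, while the paper phrases the same step in $\eta$--$\delta$ language, but the content is identical.
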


\begin{proof}
In the sequel, we show~\eqref{approachV}.
Let $K>0$ and $\eta>0$ be fixed and
define $\widetilde{\eta}:= \eta K^{-1}>0$.
By~\eqref{abscales}, $b_\veps\to 0$, as $\veps\to 0$.
Now, by Hypothesis~\ref{hyp2} there exists
$\veps_0=\veps_0(K,\widetilde{\eta})>0$
such that for any $|z|\leq K$ and $\veps<\veps_0$,
\begin{equation}\label{luvb}
C_0|z|^{1+\alpha}\sgn(z)-\widetilde{\eta}<\frac{V^{\prime}(b_\veps z)}{b^{1+\alpha}_\veps}<C_0|z|^{1+\alpha}\sgn(z)+\widetilde{\eta}.
\end{equation}
If we integrate each term from $0$ to $x$
in the above inequality,
use Hypothesis~\ref{hyp1}
 and note that, by~\eqref{abscales},
$b^{2+\alpha}_\veps=\veps$
we obtain that for any
$|z|\leq K$ and $\veps<\veps_0$
\begin{equation}
\sup\limits_{|z|\leq K}\left|\frac{V(b_\veps z)}
  {\veps}-V_0(z)\right|\leq  \widetilde{\eta} K=\eta.
\end{equation}
Since $\eta>0$ is arbitrary,
the proof of~\eqref{approachV} is complete. 

By construction, we stress that $\widetilde{V}$ also satisfies Hypothesis~\ref{hyp1} and Hypothesis~\ref{hyp2}. Hence the proof of~\eqref{approachVtilde} is analogous.
\end{proof}

\begin{lemma}[Convergence of  the normalizing constants]\label{lem:constantes}
Under the same hypotheses of Lemma \ref{lem:couplingINV} it follows that
\begin{equation}
  \label{eq:14}
\lim_{\veps \to 0} b_\veps C^{-1}_\veps =  C
\end{equation}
and
\begin{equation}
  \label{eq:1456}
\lim_{\veps \to 0} b_\veps \widetilde{C}^{-1}_\veps = C.
\end{equation}
\end{lemma}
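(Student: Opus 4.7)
The plan is to establish $\lim_{\veps \to 0} b_\veps C_\veps^{-1} = C$; the proof of $\lim_{\veps \to 0} b_\veps \widetilde{C}_\veps^{-1} = C$ is identical, using condition~\eqref{GC} in Hypothesis~\ref{hyp4} in place of Hypothesis~\ref{hyp3}, since $\widetilde{V}$ also satisfies Hypothesis~\ref{hyp1} and Hypothesis~\ref{hyp2}. The strategy is to rewrite $C_\veps/b_\veps$ after the change of variable $y = b_\veps z$ as
\[
\frac{C_\veps}{b_\veps}= \int_\bb{R} e^{-2V(b_\veps z)/\veps}\,\ud z,
\]
and to show this converges to $C = \int_\bb{R} e^{-2V_0(z)}\,\ud z$ by dominated convergence together with a tail estimate.

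First, I would note that by Lemma~\ref{claim1in} the integrand converges pointwise to $e^{-2V_0(z)}$ on every compact. Next, I would prove a uniform lower bound. By Hypothesis~\ref{hyp2} applied with $K=1$, there exists $\lambda_0>0$ such that $V'(\lambda)\geq (C_0/2)\lambda^{1+\alpha}$ for $0<\lambda\leq \lambda_0$. Integrating from $0$ to $y$, using $V(0)=0$ and the evenness of $V$ gives
\[
V(y) \geq \frac{C_0}{2(2+\alpha)}|y|^{2+\alpha} \quad \text{for} \quad |y|\leq \lambda_0.
\]
Substituting $y=b_\veps z$ and using $b_\veps^{2+\alpha}=\veps$ yields $V(b_\veps z)/\veps \geq \tfrac{1}{2}V_0(z)$ whenever $|z|\leq \lambda_0/b_\veps$.

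Then I would split the integral at the scale $\lambda_0/b_\veps$. For any fixed $K>0$, once $\veps$ is small enough so that $K<\lambda_0/b_\veps$, the integrand on $|z|\leq K$ is dominated by the integrable function $e^{-V_0(z)}$, so dominated convergence gives
\[
\int_{|z|\leq K} e^{-2V(b_\veps z)/\veps}\,\ud z \longrightarrow \int_{|z|\leq K}e^{-2V_0(z)}\,\ud z \quad \text{as}\quad \veps\to 0.
\]
The annular piece $\int_{K<|z|\leq \lambda_0/b_\veps} e^{-2V(b_\veps z)/\veps}\,\ud z$ is uniformly bounded in $\veps$ by $\int_{|z|>K}e^{-V_0(z)}\,\ud z$, which tends to $0$ as $K\to\infty$. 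It remains to show that the far tail $b_\veps^{-1}\int_{|y|>\lambda_0}e^{-2V(y)/\veps}\,\ud y$ vanishes as $\veps\to 0$. Here I would use convexity of $V$ and $V'(\lambda_0)>0$ (which holds since $V'(\lambda)>0$ for small $\lambda>0$ by Hypothesis~\ref{hyp2}, and $V'$ is non-decreasing by convexity) to obtain $V(y)\geq V(\lambda_0)+V'(\lambda_0)(y-\lambda_0)$ for $y\geq \lambda_0$; by symmetry this bounds the far tail by $\veps\,[b_\veps V'(\lambda_0)]^{-1}e^{-2V(\lambda_0)/\veps}$, which vanishes since $\veps/b_\veps=\veps^{(1+\alpha)/(2+\alpha)}\to 0$ and the exponential factor decays super-polynomially.

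The main obstacle is arranging the three-way split of the integral and verifying the uniform-in-$\veps$ domination on the bulk region $|z|\leq \lambda_0/b_\veps$; once this bound is in hand, dominated convergence on any fixed compact together with the two tail estimates combine to yield the claim. The argument for $\widetilde{C}_\veps$ proceeds identically with $V$ replaced by $\widetilde{V}$ throughout, using condition~\eqref{GC} in Hypothesis~\ref{hyp4} in place of Hypothesis~\ref{hyp3} to control the far tail.
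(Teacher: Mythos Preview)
Your argument is correct, and it takes a genuinely different route from the paper's proof in how the tail is controlled. Both proofs start with the same change of variables $C_\veps/b_\veps=\int_{\bb R}e^{-2V(b_\veps z)/\veps}\,\ud z$ and handle the compact region $|z|\le K$ via Lemma~\ref{claim1in} and dominated convergence. The difference lies in the outer region. The paper invokes Hypothesis~\ref{hyp3} directly to obtain global polynomial lower bounds on $V$, which forces a case split according to whether $\beta\ge\alpha$ or $-1<\beta<\alpha$; in the latter case one must track the crossover at $R_0$ and assemble a piecewise bound $V(z)\ge\kappa|z|^{2+\alpha}$ near zero and $V(z)\ge\kappa|z|^{2+\beta}$ far out, then use $b_\veps^{\beta-\alpha}\to\infty$ to kill the far tail. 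Your proof sidesteps all of this: you exploit only Hypothesis~\ref{hyp2} to get the uniform domination $e^{-2V(b_\veps z)/\veps}\le e^{-V_0(z)}$ on the growing bulk $|z|\le\lambda_0/b_\veps$, and then use convexity alone (the supporting-line bound $V(y)\ge V(\lambda_0)+V'(\lambda_0)(y-\lambda_0)$) for the remaining far tail $|y|>\lambda_0$. This yields an exponential integral that is controlled by $(\veps/b_\veps)e^{-2V(\lambda_0)/\veps}\to 0$, with no reference to $\beta$ at all.

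The payoff of your approach is a shorter, case-free argument that never uses the growth exponent $\beta$ from Hypothesis~\ref{hyp3}; Hypothesis~\ref{hyp1} (convexity) plus Hypothesis~\ref{hyp2} suffice. The paper's approach, by contrast, makes the role of the growth hypothesis explicit and would extend to potentials that are not globally convex but satisfy~\eqref{CGnew}. One small remark: since your tail bound uses only convexity, the same argument handles $\widetilde V$ verbatim, so your reference to condition~\eqref{GC} for the $\widetilde C_\veps$ case is in fact unnecessary.
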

\begin{proof}
By the change of variables $z \mapsto b_\veps z$ we obtain that $\frac{C_\veps}{b_\veps}=\int_{\mathbb{R}}e^{-\frac{2}{\veps}V(b_\veps z)}\ud z$.
Now,  by Lemma~\ref{claim1in} and Fatou's lemma we have
\begin{equation}\label{desi1}
C=\int_{\mathbb{R}}e^{-2V_0(z)}\ud z\leq \liminf_{\veps\to 0}\int_{\mathbb{R}}e^{-\frac{2}{\veps}V(b_\veps z)}\ud z = \liminf_{\veps\to 0} \frac{C_\veps}{b_\veps}.
\end{equation}
Similarly, for $\widetilde{V}$ we obtain
\begin{equation}\label{desi1one}
C\leq  \liminf_{\veps\to 0} \frac{\widetilde{C}_\veps}{b_\veps}.
\end{equation}
We next show that
$\limsup\limits_{\veps\to 0}\frac{C_\veps}{b_\veps} \leq C$. 
Note first that
\begin{equation}\label{ineq66}
\begin{aligned}
\limsup_{\veps \to 0}\frac{C_\veps}{b_\veps}
&\leq
\lim_{K \to \infty}\limsup_{\veps \to 0}
\int_{|z|\leq K}e^{-\frac{2}{\veps}V(b_\veps z)}\ud z
+\lim_{K \to \infty}\limsup_{\veps \to 0}
\int_{|z|>K}e^{-\frac{2}{\veps}V(b_\veps z)}\ud z.
\end{aligned}
\end{equation}
By  Lemma~\ref{claim1in}, the dominated convergence theorem,
and the monotone convergence theorem we obtain that
\begin{equation}\label{limite20}
\lim_{K \to \infty}\limsup_{\veps\to 0}\int_{|z|\leq K}e^{-\frac{2}{\veps}V(b_\veps z)}\ud z=\lim_{K \to \infty}\int_{|z|\leq K}e^{-2V_0(z)}\ud z = C.
\end{equation}
Similarly, for $\widetilde{V}$ we obtain
\begin{equation}\label{limite20one}
\lim_{K \to \infty}\limsup_{\veps\to 0}\int_{|z|\leq K}e^{-\frac{2}{\veps}\widetilde{V}(b_\veps z)}\ud z = C.
\end{equation}
It remains to show that
\begin{equation}\label{limite201}
\lim_{K \to \infty}\limsup_{\veps\to 0}\int_{|z|> K}e^{-\frac{2}{\veps}V(b_\veps z)}\ud z=0
\end{equation}
and 
\begin{equation}\label{limite201one}
\lim_{K \to \infty}\limsup_{\veps\to 0}\int_{|z|> K}e^{-\frac{2}{\veps}\widetilde{V}(b_\veps z)}\ud z=0.
\end{equation}
In the sequel, we give the proof of~\eqref{limite201}, which
we divide in two cases, depending on whether $\beta \geq \alpha$ or
$\beta<\alpha$, where $\alpha$ and $\beta$ are defined in Hypothesis~\ref{hyp2} and Hypothesis~\ref{hyp3}, respectively.
Note first that by Hypothesis~\ref{hyp2}
for any $\delta>0$ there is $c_0(\delta)>0$
such that for any $z$ with $\abs{z}\leq \delta$
\begin{equation}\label{inequality1}
\abs{V^{\prime}(z)}\geq c_0(\delta)\abs{z}^{1+\alpha}.
\end{equation} 

Assume that $\beta\geq \alpha$.
By Hypothesis~\ref{hyp1} and Hypothesis~\ref{hyp3} there is an $R>1$ and $c>0$ such that for any $z$ with $\abs{z}\geq R_0$,
\begin{equation}\label{inequality2}
  \abs{V^{\prime}(z)}\geq c\abs{z}^{1+\beta}\geq c\abs{z}^{1+\alpha}.
\end{equation}
By~\eqref{inequality1} and~\eqref{inequality2}
there is a $c_1(\delta)>0$
such that for any  $z\in \bb{R}$,
$\abs{V^{\prime}(z)}\geq c_1(\delta)\abs{z}^{1+\alpha}$.
Since $V^\prime(z) = V^\prime(\abs{z})\sgn(z)$ and $V(0)=0$,
if we compute the integral from $0$ to $z$
of both sides of~\eqref{inequality2}
we obtain that there is a $c(\delta)>0$ such that
$V(z)\geq c(\delta)\abs{z}^{2+\alpha}$ for any  $z\in \bb{R}$. 
The preceding inequality implies
that $-V(b_\veps z) \leq - c(\delta)b^{2+\alpha}_\veps |z|^{2+\alpha}$
which together with
$b^{2+\alpha}_\veps=\veps$ yields that
\begin{equation}\label{inebuena}
\lim_{K \to \infty}\limsup\limits_{\veps\to 0} \int_{|z|>K}e^{-\frac{2}{\veps}V(b_\veps z)}\ud z\leq 
\lim_{K \to \infty}\limsup\limits_{\veps\to 0}
\int_{|z|>K}e^{-2c(\delta)|z|^{2+\alpha}}\ud z = 0.
\end{equation}
This completes the case $\beta \geq \alpha$.

Now, we assume that $-1<\beta<\alpha$. 
Since $V$ satisfies Hypothesis~\ref{hyp3}, we may now  take  $R_0 \geq [(2+\alpha)(2 +\beta)^{-1}]^{\frac{1}{\alpha-\beta}}$
and let 
$\kappa_0(\delta):=\min\{V^{\prime}(z) |z|^{-1-\alpha}\colon \delta\leq z\leq R_0 \}$, where $\delta>0$ is given in~\eqref{inequality1}.
Note that $\kappa_0(\delta)>0$ and that 
$k_1(\delta) := \min \{c_0(\delta), \kappa_0(\delta)\} > 0$ is such that
$  V^{\prime}(z) \geq k_1(\delta) z^{1+\alpha}$ for $z\in [0,R_0]$.
Now, by Hypothesis~\ref{hyp3} there is $c>0$ such that $V^{\prime}(z)\geq c z^{1+\beta}$
for any $z\geq R_0$ and therefore, for $\widetilde{c}  = \widetilde{c}(\delta):= \min\{k_1(\delta), c\}>0$
\begin{equation}
V^{\prime}(z)\geq
\begin{cases}
  \widetilde{c}\, z^{1+\alpha}&\quad \textrm{ for }\quad z\in [0,R_0],\\
  \widetilde{c}\, z^{1+\beta}&\quad \textrm{ for }\quad z > R_0.
\end{cases}
\end{equation}
As $V(0) = 0$, integrating from $0$ to $z$
in the both sides of the above inequality we obtain
\begin{equation}
V(z)\geq
\begin{cases} 
  \widetilde{c}\, z^{2+\alpha}(2+\alpha)^{-1} & \quad \textrm{ for }\quad z\in[0,R_0],\\
  \frac{\widetilde{c}\, z^{2+\beta}}{(2 + \beta)} + \widetilde{c}\,\Big[\frac{ R_0^{2+\alpha}}{2+\alpha}
    - \frac{R_0^{2+\beta}}{(2 + \beta)}\Big]& \quad\text{ for }\quad z> R_0.
\end{cases}
\end{equation}
Since $R_0\geq[(2+\alpha)(2 +\beta)^{-1}]^{\frac{1}{\alpha-\beta}}$, it follows that
$\frac{ R_0^{2+\alpha}}{2+\alpha}-\frac{ R_0^{2+\beta}}{2+\beta}\geq 0$,
and because $V$ is an even function we deduce
the existence of $\kappa = \kappa(\delta)>0$ such that
\begin{equation}\label{ine:buena1}
V(z)\geq 
\begin{cases}
\kappa |z|^{2+\alpha} &\quad \textrm{ for }\quad |z|\leq R_0,\\
\kappa |z|^{2+\beta} &\quad \textrm{ for }\quad |z|\geq R_0.
\end{cases}
\end{equation}
Since $b_\veps^{2 + \alpha} = \veps $
and $b_\veps^{2 + \beta}/\veps  = b_\veps^{\beta - \alpha} = b_\veps^{- \abs{\beta - \alpha}} \to \infty$
as $\veps \to 0$, the dominated convergence theorem yields
\begin{equation}\label{intV}
\begin{split}
\lim_{K \to \infty}\limsup_{\veps\to 0}\int_{{|z|>K}}&e^{-\frac{2}{\veps}V(b_\veps z)}\ud z\\
&=
\lim_{K \to \infty}\limsup_{\veps\to 0}\int_{{|z|>K}}e^{-\frac{2}{\veps}V(b_\veps z)}\prt{\Ind{\abs{b_\veps z}\leq R_0} + \Ind{\abs{b_\veps z}>R_0}}\ud z \\
&\leq
\lim_{K \to \infty} \limsup_{\veps\to 0}\Big(\;\int_{|z|> K}e^{-2\kappa|z|^{2+\alpha}}\ud z
+\int_{|z|> K}e^{-2\kappa b^{\beta-\alpha}_\veps |z|^{2+\beta}}\ud z\; \Big)\\
&=
\lim_{K \to \infty}\int_{|z|> K}e^{-2\kappa|z|^{2+\alpha}}\ud z = 0.
\end{split}
\end{equation}
This completes the case $-1<\beta < \alpha$.

Combining~\eqref{inebuena} and~\eqref{intV} we obtain~\eqref{limite201}. This finishes the proof of~\eqref{eq:14}.

In the sequel, we stress that~\eqref{eq:1456} is just a consequence from above  case $\beta\geq \alpha$.
Indeed, by~\eqref{desi1one} and~\eqref{limite20one}, it is enough to 
show~\eqref{limite201one}.
Since $\widetilde{V}$ satisfies Hypothesis~\ref{hyp3} with $\beta=\alpha$, the proof is already covered in~\eqref{inebuena}.

The proof of Lemma~\ref{lem:constantes} is finished.
\end{proof}
\begin{proof}[Proof of Lemma \ref{lem:couplingINV}]
    By~\eqref{densityY},~\eqref{eq:cambio} and Scheff\'e's lemma~(\cite[Lemma~3.3.2, p.95]{Reiss}), 
to obtain  \eqref{eq:limitesm},
it suffices to note that Lemma~\ref{claim1in} and Lemma~\ref{lem:constantes} imply that
\begin{equation}\label{densityc}
  \lim\limits_{\veps\to 0}\frac{b_{\veps}}{C_{\veps}}
  e^{-2\frac{V(b_{\veps}z)}{\veps}}=\frac{1}{C}{e^{-2 V_0(z)}}\quad
  \textrm{ for any }\quad z\in \mathbb{R}
\end{equation}
and
\begin{equation}\label{densityc23}
  \lim\limits_{\veps\to 0}\frac{b_{\veps}}{\widetilde{C}_{\veps}}
  e^{-2\frac{\widetilde{V}(b_{\veps}z)}{\veps}}=\frac{1}{C}{e^{-2 V_0(z)}}\quad
  \textrm{ for any }\quad z\in \mathbb{R}.
\end{equation}
\end{proof}
\section{\textbf{Complements}}\label{sec:complements}
In this section we include, for completeness of the exposition, a few
results that have been used throughout the text with a brief explanation.

\begin{proof}[Proof of Lemma~\ref{medidainvariante}]
We apply Theorem~3.3.4 of~\cite[Ch.~3, p.91]{KU}.
By Hypothesis~\ref{hyp3} for all $|z|\geq R$ we have 
$-V^{\prime}(z)\frac{z}{|z|^{1+\kappa}}\leq  -c|z|^{\rho-\kappa}$ for any $\kappa\in (0, \rho)$,
and therefore
\[\lim\limits_{|z|\to
  \infty}\left(-V^{\prime}(z)z|z|^{-1-\kappa}\right)=-\infty<0.
  \]
  Hence, the field $-V^{\prime}$ satisfies the drift condition
  eq.~(3.3.4) in~\cite[Ch.~3, p.86]{KU}. By Theorem~3.3.4 of~\cite{KU}
  we have the existence and uniqueness of the invariant measure
  $\mu^\veps$. In addition, for any $c>0$ there are
  $C_1=C_1(c,\kappa,\veps)>0$ and $C_2=C_2(c,\kappa,\veps)>0$ such
  that
\[
\dtv(X^{\veps}_t(x),X^{\veps}_t(y))\leq  C_1 e^{-C_2 t}(e^{c|x|}
+e^{c|y|})\quad \textrm{for any }\quad x,y \in \mathbb{R},~ t\geq 0.
\]   
By
Hypothesis~\ref{hyp3}, we have $\int_{\mathbb{R}} e^{c |z|}\mu^{\veps}(\ud z)<\infty$. 
Therefore, Theorem~3.3.4 in~\cite{KU}
yields~\eqref{zero}. Moreover, formula~\eqref{formula} follows
from Proposition~4.2 in~\cite[p.110]{pavliotis}.
\end{proof}
Let $\mathcal{C}^2$ represent the set of twice continuously
differentiable functions $f: \bb{R} \to \bb{R}$.
\begin{lemma}[Existence of a regular potential]\label{lem:Vtilde}
Assume that $V$ satisfies Hypothesis~\ref{hyp1} and
Hypothesis~\ref{hyp2} with $\alpha>0$.
For each  $M>0$, there exist an even $\mathcal{C}^2$ convex function
$V_M = V_{M,\alpha}:\mathbb{R} \to [0,\infty)$ and positive constants $c=c_{M,\alpha}$,
$C=C_{M,\alpha}$ and $R=R_{M,\alpha}$ such that
\begin{equation}\label{eq:moli1}
V_{M}(z)=V(z)\quad \textrm{ for }\quad  |z|\leq M
\end{equation}
and 
\begin{equation}\label{eq:moli2}
V^\prime_{M}(z)\geq c z^{1+\alpha}
\qquad \textrm{ and }\qquad
|V^\prime_{M}(z)|\leq C e^{z^2}\quad \textrm{ for all  }\quad z\geq R.
\end{equation}
In particular, the potential $V_{M}$ satisfies
Hypothesis~\ref{hyp1}, Hypothesis~\ref{hyp2} and
Hypothesis~\ref{hyp4}.
\end{lemma}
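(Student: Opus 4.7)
The plan is to construct $V_M$ by pasting the original $V$ on $[-M,M]$ to an explicit convex extension outside, so that $C^2$ regularity holds at the joining points $\pm M$. Because $V$ is even and the construction will respect this symmetry, it suffices to define $V_M$ on $[0,\infty)$ and extend by evenness. For $z>M$ I propose
\[
V_M(z) := V(M) + V'(M)(z-M) + \tfrac{1}{2}V''(M)(z-M)^2 + \gamma (z-M)^{2+\alpha},
\]
with any $\gamma>0$ (for concreteness, $\gamma=1$). The first three terms form the order-$2$ Taylor polynomial of $V$ at $M$, which ensures the desired $C^2$ match at the boundary; the fourth term is a minimal convex ``push'' designed to produce the required polynomial lower growth on $V_M'$ while remaining $C^2$ at $M$ precisely because $\alpha>0$.

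The verifications I would then carry out, in order, are: (i) $C^2$ matching at $z=M$ by direct computation, noting that $(z-M)^{2+\alpha}$ and its first two derivatives vanish at $M$ since $\alpha>0$, and matching at $z=-M$ by the evenness extension together with $V'(-M)=-V'(M)$, $V''(-M)=V''(M)$; (ii) global convexity, which holds on $[-M,M]$ by Hypothesis~\ref{hyp1} and outside because $V_M''(z)=V''(M)+\gamma(2+\alpha)(1+\alpha)(|z|-M)^{\alpha}\geq 0$; (iii) nonnegativity $V_M\geq 0$ and $V_M(0)=0$, which follows from $V\geq 0$ on $[-M,M]$, $V_M(0)=V(0)=0$, and the fact that every term in the exterior formula is nonnegative since $V'(M)\geq 0$ (by convexity of $V$ together with $V'(0)=0$); (iv) the lower bound: for $z\geq R:=2M$ one has $(z-M)^{1+\alpha}\geq (z/2)^{1+\alpha}$, so the $\gamma(2+\alpha)(z-M)^{1+\alpha}$ term dominates in $V_M'(z)$ and yields $V_M'(z)\geq c\, z^{1+\alpha}$ with $c:=\gamma(2+\alpha)/2^{1+\alpha}$; (v) the upper bound $|V_M'(z)|\leq C e^{z^2}$ is immediate, since $V_M'$ is polynomial on $(M,\infty)$ and any polynomial is dominated by a constant multiple of $e^{z^2}$; and (vi) the local behavior at the origin (Hypothesis~\ref{hyp2}) is inherited automatically because $V_M$ coincides with $V$ on a full neighborhood of $0$.

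The main subtlety, rather than a genuine obstacle, lies in step (i): matching the second derivative at $\pm M$ requires the power term to contribute $0$ to $V_M''(M^+)$. This is exactly where we use $\alpha>0$, since the second derivative of $(z-M)^{2+\alpha}$ is proportional to $(z-M)^\alpha$, which tends to $0$ as $z\to M^+$. If $\alpha$ were allowed to equal $0$, the present explicit formula would produce a jump in $V_M''$ at $M$, and one would need a qualitatively different construction (for instance, a mollified convex combination of $V$ with a quadratic-plus-exponential reference potential); but under Hypothesis~\ref{hyp2} with $\alpha>0$ the elementary formula above suffices and all six verifications are routine.
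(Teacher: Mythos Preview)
Your proof is correct and complete; all six verifications go through as you describe, and the crucial use of $\alpha>0$ to secure the $C^2$ match at $M$ is well identified. One cosmetic point: $V_M'$ on $(M,\infty)$ is not literally a polynomial when $\alpha\notin\mathbb{N}$, but it is a finite sum of power functions, so the domination by $e^{z^2}$ is just as immediate.

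Your route is genuinely different from the paper's. The paper works at the level of second derivatives: it takes a smooth cutoff $g$ with $g\equiv 0$ on $[0,1/2]$ and $g\equiv 1$ on $[1,\infty)$, forms the blended second derivative
\[
G_M(u)=\bigl(1-g(u^2/2M^2)\bigr)V''(u)+g(u^2/2M^2)\,|u|^\alpha,
\]
and then integrates twice (from $0$) to recover $V_M$. This guarantees $V_M''=V''$ on $[-M,M]$ and $V_M''(u)=|u|^\alpha$ for $|u|\geq\sqrt{2}\,M$, so the tail of $V_M'$ is exactly $(1+\alpha)^{-1}|u|^{1+\alpha}$ plus a constant, and both growth bounds follow immediately. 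The cost is an auxiliary bump function and a transition layer $[M,\sqrt{2}\,M]$; the gain is that the far-field behavior is an explicit pure power. Your Taylor-plus-power construction is more elementary (no mollifiers, no transition layer) and gives the bounds with slightly less bookkeeping; the paper's mollifier approach is more flexible if one later wanted higher regularity or a prescribed asymptotic form for $V_M''$ at infinity. For the present lemma, either approach suffices.
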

\begin{proof}
The proof follows by a standard mollifier procedure.
We mimic the lines given in Proposition~4.10 of~\cite{BJ}.
Let $g:\mathbb{R}\to [0,1]$ be an increasing 
$\mathcal{C}^\infty$-function such that $g(u)=0$ for $u\leq 1/2$,
$g(u)=1$ for $u\geq 1$, and $g(u)\in (0,1)$ for all $u\in (1/2,1)$. 
Let $M>0$ be fixed and 
define 
\begin{equation}\label{G_M}
G_M(u)=\left(1-g\left(\frac{u^2}{2M^2}\right)\right)V^{\prime\prime}(u)+g\left(\frac{u^2}{2M^2}\right)
|u|^{\alpha}\quad \textrm{ for all }\quad u \in \mathbb{R}.
\end{equation}
Observe that $G_M(u)=V^{\prime\prime}(u)$ for all $|u|\leq M$, and
$G_M(u)=|u|^{\alpha}$ for all $|u|\geq \sqrt{2}M$.  We note that $G_M$
is a non-negative continuous function and then we set
$H_M(u):=\int_{0}^{u}G_M(y)\ud y$ for all $u\in \mathbb{R}$.  Finally
we define $V_M(z):=\int_{0}^{\abs{z}}H_M(u)\ud u$ for all $z $. Since
$G_M$ is an even function, it follows that $H_M$ is odd and $V_M$ is again
even. Now, since $V_M(0) = V(0)=0$, $V^{\prime}_M(0) = V^{\prime}(0) = 0$ and
$V^{\prime\prime}_M(z) = V^{\prime\prime}(z)$ for $z \leq M$ it follows that  $V_M$ 
satisfies~\eqref{eq:moli1}. Moreover, since there is $C>0$ for which
$\abs{u}^\alpha \leq C \exp(u^2)$ for all $u \in \bb{R}$ it follows
that $V_M$ satisfies~\eqref{eq:moli2}.
\end{proof}
 
\begin{proposition}[Disintegration inequality]\label{L:disintegration}
  Suppose that $ \{X(x) = (X_t(x), t \geq 0), x \in S\}$ and
  $Y = \{Y(y) = (Y_t(y), t \geq 0), y \in S\}$ are Markov families on
  the measurable space $(S,\mathcal{S})$ and defined on the same
  probability space $(\Omega, \mc{F}, \bb{P})$.  Then, for all
  $r,s>0$, $a,b \in S$, the following disintegration inequality for
  the total variation distance holds:
  \begin{equation}
    \label{eq:15}
    \dtv\big(X_{r + s}(a),Y_{r+s}(b)\big) \leq \int_{S^2}
    \dtv\big(X_s(x),Y_s(y)\big)\, \bb{P}(X_r(a) \in \ud x, Y_r(b) \in
    \ud y).
  \end{equation}
\end{proposition}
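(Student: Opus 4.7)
The plan is to combine the variational characterization of the total variation distance,
$\dtv(\mu,\nu)=\sup_{A\in\mc{S}}|\mu(A)-\nu(A)|$, with the Markov property applied separately to each of the two families $X$ and $Y$.

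First, I would fix an arbitrary measurable set $A\in\mc{S}$ and rewrite $\bb{P}(X_{r+s}(a)\in A)$ and $\bb{P}(Y_{r+s}(b)\in A)$ by conditioning on the time-$r$ marginals. Denote the joint law $\mu_r(\ud x,\ud y):=\bb{P}(X_r(a)\in\ud x,\,Y_r(b)\in\ud y)$, which makes sense because $X$ and $Y$ are defined on the same probability space. Its marginals are the laws of $X_r(a)$ and $Y_r(b)$. By the Markov property of the families,
\begin{equation}
\bb{P}(X_{r+s}(a)\in A\mid X_r(a))=\bb{P}(X_s(X_r(a))\in A),\qquad \bb{P}(Y_{r+s}(b)\in A\mid Y_r(b))=\bb{P}(Y_s(Y_r(b))\in A),
\end{equation}
where on the right-hand sides we think of $X_s(\cdot)$ and $Y_s(\cdot)$ as fresh evaluations of the semigroups. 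Taking expectations and integrating out the coordinate that does not appear,
\begin{equation}
\bb{P}(X_{r+s}(a)\in A)-\bb{P}(Y_{r+s}(b)\in A)=\int_{S^2}\bigl[\bb{P}(X_s(x)\in A)-\bb{P}(Y_s(y)\in A)\bigr]\,\mu_r(\ud x,\ud y).
\end{equation}

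Second, I would take absolute values, push them inside the integral using the triangle inequality, and bound the integrand by $|\bb{P}(X_s(x)\in A)-\bb{P}(Y_s(y)\in A)|\leq\dtv(X_s(x),Y_s(y))$. This yields
\begin{equation}
|\bb{P}(X_{r+s}(a)\in A)-\bb{P}(Y_{r+s}(b)\in A)|\leq\int_{S^2}\dtv(X_s(x),Y_s(y))\,\mu_r(\ud x,\ud y),
\end{equation}
and taking the supremum over $A\in\mc{S}$ gives the claimed inequality.

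The only subtle point, rather than a genuine obstacle, is ensuring that the Markov property is applied in the correct form: conditioning on the \emph{single} time marginal $X_r(a)$ (respectively $Y_r(b)$) rather than on the whole history, so that the resulting conditional distribution equals the distribution of $X_s(x)$ with $x=X_r(a)$. This is the defining property of a Markov family and justifies the use of the joint measure $\mu_r$ above, since the integrand $\bb{P}(X_s(x)\in A)-\bb{P}(Y_s(y)\in A)$ depends on each coordinate only through the corresponding family. Measurability of $(x,y)\mapsto\dtv(X_s(x),Y_s(y))$ can be justified by taking the supremum in the variational formula over a countable generating algebra of $\mc{S}$ (assuming, as is standard in this setting, that $(S,\mc{S})$ is a nice measurable space such as a Polish space with its Borel $\sigma$-algebra).
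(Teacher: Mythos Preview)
Your proof is correct and follows essentially the same approach as the paper: both use the Markov property of each family separately to express the time-$(r+s)$ probabilities as integrals against the joint law $\bb{P}(X_r(a)\in\ud x,\,Y_r(b)\in\ud y)$, subtract, bound the integrand pointwise by $\dtv(X_s(x),Y_s(y))$, and take the supremum over measurable sets. Your additional remark on measurability of $(x,y)\mapsto\dtv(X_s(x),Y_s(y))$ is a welcome clarification that the paper leaves implicit.
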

\begin{proof}
Write $t = r + s$. Since the families $\{X(x), x \in S\}$ and $\{Y(y), y \in
  S\}$ are Markovian and are defined on the same probability space,
  for any $a \in S$ and $B \in \mc{S}$ we have that
  \begin{equation}
    \label{Markov-X-disintegration}
    \begin{aligned}
      \bb{P}(X_t(a) \in B) &= \int_S \bb{P}(X_s(x) \in B)
      \bb{P}(X_r(a) \in \ud x)\\
      &= \int_{S^2} \bb{P}(X_s(x) \in B)
      \bb{P}(X_r(a) \in \ud x, Y_r(a) \in \ud y).
    \end{aligned}
  \end{equation}
  Similarly, we have that
  \begin{equation}
    \label{Markov-Y-disintegration}
    \bb{P}(Y_t(a) \in B) = \int_{S^2} \bb{P}(Y_s(y) \in B)
    \bb{P}(X_r(a) \in \ud x, Y_r(a) \in \ud y).
  \end{equation}
  Therefore, from the definition of total variation distance, together
  with~\eqref{Markov-X-disintegration} and~\eqref{Markov-Y-disintegration}
  we obtain that
  \begin{equation}  
  \begin{split}
    \dtv\big(X_t(a), Y_t(b)\big)
    &= \sup_{B \in \mc{S}} \abs{\bb{P}\big(X_t(a) \in B\big) - \bb{P}\big(Y_t(b\big)
      \in B)}\\ 
    &\hspace{-1.5cm}= \sup_{B \in \mc{S}} \abs{\int_{S^2}\Big(\bb{P}\big(X_s(x) \in B\big) -\bb{P} \big(Y_s(y)
      \in B\big)\Big)\, \bb{P}\big(X_r(a) \in \ud x , Y_r(b) \in \ud y\big)}\\
    &\hspace{-1.5cm}\leq \int_{S^2} \dtv\big(X_s(x),Y_s(y)\big)\bb{P}
      \big(X_r(a) \in \ud x , Y_r(b) \in \ud y\big). 
  \end{split}
  \end{equation}
\end{proof}
\begin{proposition}[Support theorem for diffusions]\label{Supporttheorem}
For any $x \in \bb{R}$ and $\veps\in [0,1]$ let $Y^\veps(x) =
(Y^\veps_t(x), t \geq 0)$ be the solution of~\eqref{gensde}. For
each fixed $t>0$, the law of $Y^\veps_t(x)$ is absolutely continuous with respect to the Lebesgue measure and it has full support on
$\bb{R}$. 
\end{proposition}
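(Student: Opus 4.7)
The plan is to establish both absolute continuity and full support simultaneously via a localized Girsanov transformation comparing the law of $Y^\veps_t(x)$ with that of Brownian motion started at $x$. The only subtlety is that the drift $F_\veps$ is not globally bounded, so I localize at exit times from large compacts before performing the change of measure.

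First, I introduce the stopping times $\tau_N := \inf\{s \geq 0 \colon |Y^\veps_s(x)| \geq N\}$. Since $Y^\veps(x)$ has continuous paths, $\tau_N \to \infty$ $\bb{P}$-almost surely as $N \to \infty$. Because $F_\veps$ is continuous on $\bb{R}$, there exists $K_N$ with $|F_\veps(z)| \leq K_N$ for $|z| \leq N$, so the local martingale $M^N_s := -\int_0^{s \wedge \tau_N} F_\veps(Y^\veps_u(x))\, \ud B_u$ satisfies $[M^N]_t \leq K_N^2 t$, hence Novikov's condition. The stochastic exponential $\mathcal{E}(M^N)$ is then a true $\bb{P}$-martingale on $[0,t]$, and the probability measure $Q_N$ defined on $\mathcal{F}_t$ by $\ud Q_N/\ud \bb{P} = \mathcal{E}(M^N)_t$ is equivalent to $\bb{P}$ on $\mathcal{F}_t$. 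By Girsanov's theorem, under $Q_N$ the process $\widetilde{B}_s := B_s + \int_0^{s \wedge \tau_N} F_\veps(Y^\veps_u(x))\, \ud u$ is a Brownian motion, and substituting into~\eqref{gensde} yields $Y^\veps_{s\wedge \tau_N}(x) = x + \widetilde{B}_{s\wedge\tau_N}$ for every $s \in [0,t]$. In particular, under $Q_N$ the stopped process has the law of a Brownian motion starting at $x$ stopped at its first exit from $(-N,N)$.

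From this identification both conclusions follow. For absolute continuity, given a Lebesgue-null set $A \subset \bb{R}$, under $Q_N$ the random variable $x + \widetilde{B}_t$ has a Gaussian law, so $Q_N(Y^\veps_t(x) \in A,\, t < \tau_N) = 0$; equivalence of $\bb{P}$ and $Q_N$ gives $\bb{P}(Y^\veps_t(x) \in A,\, t < \tau_N) = 0$, and letting $N \to \infty$ yields $\bb{P}(Y^\veps_t(x) \in A) = 0$. For full support, given a nonempty open set $U \subset \bb{R}$, choose $N$ large enough that $U \cap (-N, N) \neq \emptyset$ and contains some ball around a point $y$ with $|y|<N$; by standard Brownian motion estimates, the event that $x + \widetilde{B}_s$ remains in $(-N, N)$ throughout $[0,t]$ and lies in $U$ at time $t$ has strictly positive $Q_N$-probability, and equivalence forces $\bb{P}(Y^\veps_t(x) \in U,\, t<\tau_N) > 0$.

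The main technical point, and essentially the only one that requires care, is verifying the validity of the Girsanov change of measure despite the possibly fast growth of $F_\veps$; the localization via $\tau_N$ resolves this completely because $F_\veps$ is bounded on each $[-N,N]$, and the limit $N \to \infty$ uses only the almost-sure finiteness of $\sup_{s \leq t} |Y^\veps_s(x)|$ guaranteed by path continuity (or, quantitatively, by Lemma~\ref{L2bound}). I do not anticipate any further obstacle.
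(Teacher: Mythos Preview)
Your argument is correct and takes a genuinely different route from the paper's proof. The paper establishes absolute continuity by adapting the characteristic-function method of Fournier and Printems (bounding $|\widehat\mu_t(b)|$ via a comparison with $Z_\delta = Y_{t-\delta} + B_t - B_{t-\delta}$ and using the moment estimate~\eqref{F-Y-eps-8-bound}), and then obtains full support separately via Nirenberg's strong maximum principle for the Fokker--Planck equation. Your localized Girsanov argument is more elementary and handles both conclusions at once: because the diffusion coefficient is identically~$1$, the change of measure reduces the stopped process exactly to stopped Brownian motion, from which absolute continuity and full support follow from the Gaussian density and the support theorem for Wiener measure. The paper's approach would extend more readily to SDEs with degenerate or non-constant diffusion coefficients, where Girsanov with drift removal is unavailable; conversely, your approach avoids the external inputs (the $L^8$ moment bound on $F_\veps(Y_s)$ and the PDE maximum principle) and would be the natural choice in this additive-noise setting. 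One minor point worth making explicit: for the full-support step you should take $N > |x|$ so that $\tau_N > 0$, but this is implicit in your choice of $N$ large.
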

\begin{proof}
  Fix $\veps \in [0,1]$. Now, write for simplicity
  $Y_t = Y^\veps_t(x)$, $F =F_\veps $ and note that, almost surely,
  for every $t\geq 0$
  \begin{equation}\label{Yt}
    Y_t = x + \int_0^t F (Y_s)\, \ud s + B_t.
  \end{equation}
  The proof is done in two steps. On the first step, following the
  ideas in~\cite{Fournier}, we prove that for any $t>0$ the law of
  $Y_t$ denoted by $\mu_t$ is absolutely continuous with respect to
  the Lebesgue measure on $\bb{R}$. Let $\rho_t$ represent a
  density of $\mu_t$, i.e. for any $a,b \in \bb{R}$ with $a<b$
  \begin{equation}
    \label{density}
    \bb{P}(Y_t \in [a,b]) = \mu_t([a,b]) = \int_a^b \rho_t(z)\, \ud z.
  \end{equation}
  On the second step, we prove, with the help of the maximum principle
  in~\cite{Nirenberg}, that $\rho_{t + s}(z) >0$ for all $z \in \bb{R}$.
  Since $t>0$ and $s>0$ are arbitrary, this completes the proof that $\mu_t(\ud
  z) = \rho_t(z) \ud z$ with   $\rho_t(z)>0$ for all $t>0$, i.e. the law
  of $Y_t$ has full support. 

  We remark that a standard localization argument is not
  straightforward with the methods in~\cite{Fournier}. Indeed, as the
  authors themselves say
  \begin{quote}
    ``Our result might be deduced from [Aronson-1968] by a localization
    argument, however, we did not succeed in this direction.''
  \end{quote}
  \noindent
  \textbf{Step 1.}
  We adapt to our case the proof of Theorem~2.1 in~\cite{Fournier}. 
  This means that $\ud Y_t = b(Y_t) \ud t + \sigma(Y_t) \ud B_t$ with
  $b(z) = F(z)$ and $\sigma(z) = 1$ for all $z \in \bb{R}$. Since
  $b$ is not bounded by a linear function we cannot apply Theorem~2.1 
  in~\cite{Fournier} directly. However, the field $F$ is convex and
  drives the trajectories towards the origin which allows us to
  obtain $L^2$ bounds and replicate the main steps in the proof.
  Moreover, the noise term is simpler
  and this allows us to ignore the auxiliary function $f_\delta$
  defined in Lemma~1.2 in~\cite{Fournier}. 
  
  Now, for $\delta \in (0,t)$, consider the random variable
$Z_\delta: = Y_{t - \delta} + B_{t} - B_{t - \delta}$.
  Note that for any $b\in \bb{R}$
  \begin{equation}
    \label{Z-transform}
    \abs{\bb{E}[\exp(\ii b Z_\delta) \vert \mc{F}_{t- \delta}]} =
    \abs{\exp(\ii bY_{t-\delta} - \delta b^2/2) }  = \exp(-\delta b^2/2),
  \end{equation}
where $(\mc{F}_{t},t\geq 0)$ is the natural filtration of the Brownian motion $B=(B_t,t\geq 0)$ and $\ii$ is the unit imaginary.
  By~\eqref{Yt} it follows that 
  \begin{equation}
    \label{Y-Z-dif}
    Y_t - Z_\delta = \int_{t-\delta}^t F (Y_s)\, \ud s.
  \end{equation}
  By~\eqref{F-Y-eps-8-bound}, there is $C = C_t$ such that
  $\sup_{s \in [0,t]}\bb{E}[|F(Y_s)|^2] \leq C^2$ and therefore by
  Jensen's and  Cauchy--Schwarz's inequalities
  \begin{equation}
    \label{abs-Y-Z}
    \begin{aligned}
      \big(\bb{E}[\abs{Y_t - Z_\delta}]\big)^2 &\leq \bb{E}[\abs{Y_t -
        Z_\delta}^2]\\
        &
      = \bb{E} \bigg[\Big(\int_{0}^\delta F(Y_{t-\delta +
        s})\,  \ud s\Big)^2\bigg]\\
        & \leq \delta \int_0^\delta \bb{E}[\abs{F(Y_{t-\delta +
          s})}^2] \leq C^2 \delta^2.
    \end{aligned}
  \end{equation}
  Let $\mu_t$ be the law of $Y_t$ and let $\widehat{\mu}_t$ be the
  characteristic function of $\mu_t$ defined by
  $\widehat{\mu}_t(b) : = \bb{E}[\exp(\ii bY_t)]$. We note that for any
  $\delta \in (0,t)$ and $b \in \bb{R}$, by~\eqref{Z-transform} 
  and~\eqref{abs-Y-Z}, we have
  \begin{equation}
    \label{mu-hat-t}
    \begin{split}
      \abs{\widehat{\mu}_t(b)} &= \abs{\bb{E}[\exp(\ii b Y_t)]}
      \leq \abs{\bb{E}[\exp(\ii b Z_\delta)]}  +\abs{b} \bb{E}[\abs{Y_t
        - Z_\delta}]\\
        &\leq  \exp(-\delta b^2/2) + C \abs{b}  \delta.
    \end{split}
  \end{equation}
  Let $R_t>0$ be such that $ (\log\abs{b})^2/b^2 < t$ when
  $\abs{b} > R_t$.
  For each $b$ with $\abs{b}\geq R_t$ we choose
  $\delta_b: = (\log\abs{b})^2/b^2$ and so the bound 
  in~\eqref{mu-hat-t} implies that
  \begin{equation}
    \label{Yt-Zdelta_b}
    \begin{aligned}
      \abs{\widehat{\mu}_t(b)} \leq \exp(-\delta_b b^2/2) + C \abs{b}
      \delta_b &= \exp(-(\log \abs{b})^2/2) + C (\log\abs{b})^2/b.
    \end{aligned}
  \end{equation}
   Since $\abs{\widehat{\mu}_t(b)} \leq 1$ for all $b \in \bb{R}$ it
   follows that $\int_{-\infty}^\infty\abs{\widehat{\mu}_t(b)}^2 \ud b< \infty$ and so, by
   Lemma~1.1 in~\cite{Fournier} it follows that $\mu_t$
   has density in $\bb{R}$.

  \noindent
  \textbf{Step 2.} Note that $\rho_{t+s}$ is the solution of
  \begin{equation}\label{pde}
    L u + F^{\prime} u =0
  \end{equation}
  at time $s$ with initial condition $\rho_t$ where
  $Lu : = \partial_t u -\frac{1}{2}(\partial_x)^2 u + F \partial_x u $.
  By Theorem~3 in~\cite{Nirenberg}, $(\rho_{t + h}, h \geq 0)$ is a
  non-negative solution of~\eqref{pde} and therefore either
  $\rho_{t + s}(z)>0$ for all $z\in \bb{R}$ or $\rho_{t + s}(z) = 0$ for all
  $z \in \bb{R}$, and since $\int_\bb{R} \rho_{t + s}(z) \ud z = 1$ it follows that $\rho_{t+s}(z)>0$ for all $z\in \bb{R}$. This concludes the proof.  
\end{proof}
We conclude this section with the following  result.
\begin{lemma}[Monotonicity and continuity]\label{lem:contdecre}
  For all $x \in \bb{R}$ the function $t \mapsto G_t(x) $ defined
  in~\eqref{Gdef} is continuous and strictly decreasing in $t$.
\end{lemma}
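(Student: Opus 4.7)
The plan is to derive both properties of $G_x(t)=\dtv(\mu_t^\infty,\nu)$ from the Hahn--Jordan decomposition of $\mu_t^\infty-\nu$ combined with pointwise positivity of the transition density, where $\mu_t^\infty$ denotes the law of $Y_t(\sgn(x)\infty)$. By Proposition~\ref{Supporttheorem}, for each $t>0$ the measures $\mu_t^\infty$ and $\nu$ are absolutely continuous with strictly positive continuous densities $\rho_t,\rho:\bb{R}\to(0,\infty)$, and by Proposition~\ref{prop:non-trivial-distance} one has $c(t):=G_x(t)\in(0,1)$.

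For the strict monotonicity, I fix $t>0$ and $r>0$. The Hahn--Jordan decomposition yields non-negative integrable functions $f^\pm$ with disjoint supports such that $\rho_t-\rho=f^+-f^-$ and $\int_{\bb{R}}f^+=\int_{\bb{R}}f^-=c(t)$. Let $\eta^\pm$ be the probability measures with densities $f^\pm/c(t)$. Since $\nu$ is invariant under the extended Markov semigroup $(P_r,r\geq 0)$ of Proposition~\ref{P:limitSDE}, applying $P_r$ to the identity $\mu_t^\infty=\nu+c(t)(\eta^+-\eta^-)$ (as signed measures) gives
\[
\mu_{t+r}^\infty-\nu=c(t)\,(\eta^+P_r-\eta^-P_r),
\]
and consequently $c(t+r)=c(t)\cdot\dtv(\eta^+P_r,\eta^-P_r)$. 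The density of $\eta^\pm P_r$ at $y\in\bb{R}$ equals $g^\pm(y):=\int_{\bb{R}}p_r(z,y)(f^\pm(z)/c(t))\,\ud z$, where $p_r$ is the transition density of~\eqref{Y00}. By the proof of Proposition~\ref{Supporttheorem}, $p_r(z,y)>0$ for all $z,y\in\bb{R}$, so $g^\pm(y)>0$ for every $y$; therefore $\dtv(\eta^+P_r,\eta^-P_r)=1-\int_{\bb{R}}\min(g^+(y),g^-(y))\,\ud y<1$ and $c(t+r)<c(t)$.

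For continuity, the triangle inequality gives $|c(s)-c(t)|\leq\dtv(\mu_s^\infty,\mu_t^\infty)$, so it suffices to prove $\dtv(\mu_s^\infty,\mu_t^\infty)\to 0$ as $s\to t$. Fix $t_0\in(0,t)$; for $s>t_0$ one can write $\rho_s(y)=\int_{\bb{R}}p_{s-t_0}(z,y)\rho_{t_0}(z)\,\ud z$. The joint continuity of $(s,z,y)\mapsto p_{s-t_0}(z,y)$ on $(t_0,\infty)\times\bb{R}^2$, a standard parabolic regularity consequence for the Kolmogorov equation of~\eqref{Y00}, combined with a dominated convergence argument in $z$, yields $\rho_s(y)\to\rho_t(y)$ pointwise in $y$ as $s\to t$. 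Since the $\rho_s$ are probability densities, Scheff\'e's lemma~\cite[Lem.~3.3.2]{Reiss} upgrades this pointwise convergence to convergence in $L^1$, equivalently to convergence in total variation distance.

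The main obstacle is the continuity step: while the pointwise convergence of densities is intuitive, rigorously justifying it requires either Gaussian-type two-sided bounds on the transition density of~\eqref{Y00} or a strong-continuity argument for the associated $L^1$ semigroup. Given the smooth, locally Lipschitz, polynomially growing drift $F_0$, these facts are classical, but the limiting procedure must be set up from a fixed reference time $t_0>0$ so as to avoid the singular initial condition $\delta_{\sgn(x)\infty}$; the bookkeeping is taken care of by the extension built in Proposition~\ref{P:limitSDE} together with the uniform $L^2$ bounds of Lemma~\ref{L2bound}.
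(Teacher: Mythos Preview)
Your proof is correct and follows essentially the same approach as the paper. For continuity, both you and the paper use the triangle inequality $|G_x(s)-G_x(t)|\leq\dtv(\mu_s^\infty,\mu_t^\infty)$, pointwise convergence of densities (you via transition-density regularity, the paper via the Fokker--Planck equation), and Scheff\'e's lemma; for strict monotonicity, the paper uses the maximal-coupling decomposition $\mu_t^\infty=(1-\theta)\nu+\theta\eta$ in place of your Hahn--Jordan decomposition, but the key step---applying the semigroup and invoking strict positivity of the transition density to get a factor strictly less than one---is identical.
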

\begin{proof}
  Let $x \in \bb{R}$ be fixed. By the triangle inequality  for all $t>0$ and $s>0$ we have 
  \begin{equation}
    \label{GboundY}
    \abs{G_x(t) - G_x(s) } \leq \dtv  (Y_t(\sgn(x)\infty), Y_s(\sgn(x)\infty)).
  \end{equation}
Then it is enough to show that the right-hand side of the preceding inequality tends to zero as $t\to s$.
For short, we write $Y_u = Y_u(\sgn(x)\infty)$, $u\geq 0$.
By Proposition~\ref{Supporttheorem} it follows that for every $t>0$, the law of $Y_t$ is absolutely continuous with respect to the Lebesgue measure and has a full support density $\rho_t(y)$. 
Moreover, $(\rho_t(y))_{t\geq 0}$ solves the so-called Fokker--Planck equation 
 \begin{equation}
    \label{Fokker-PlankUNO}
    \begin{aligned}
      \partial_t \rho_t(y)= \frac{1}{2}\partial^2_y \rho_t -
      \partial_y (F_0(y) \rho_t(y)),
    \end{aligned}
  \end{equation}
where $F_0$ is as defined in \eqref{fieldnota}, see  for instance~\cite[Section~2.2]{SI}.
Then $\lim_{t \to s} \rho_t(y)
  = \rho_s(y)$ for all $y \in \mathbb{R}$ and therefore by Scheff\'e's lemma, see~\cite[Lemma~3.3.2, p.95]{Reiss}, we have $\lim_{t \to s} \dtv(Y_t,Y_s) = 0$.
This completes the proof that $t\mapsto G_t(x)$ is continuous. 
  
We now turn to the proof that $G_t(x)$ is strictly decreasing in
$t$. Recall that $G_x(t)=\dtv\prt{Y_t(\sgn(x)\infty),\nu}$ for
$t\geq 0$.  Let $x\in \mathbb{R}$ and $t>0$ be
fixed. By~\eqref{eq:menorqueuno} we have $G_x(t)<1$. For short let
$\theta_{x,t}:=G_x(t)$ and denote the law of $Y_t(\sgn(x)\infty)$ by
$\mu_{x,t}$.  Let $(P_s)_{s\geq 0}$ be the semigroup associated to the
Markov process $(Y_s(z),s\geq 0,z\in \mathbb{R})$ and note the
invariance $P_s(\nu)=\nu$, $s\geq 0$.  Since $\theta_{x,t}$ is the
total variation distance between $Y_t(\sgn(x)\infty)$ and $\nu$, there
exists a coupling between $\mu_{x,t}$ and $\nu$ such that
$\mu_{x,t}=(1-\theta_{x,t})\nu+\theta_{x,t}\eta_{x,t}$, where
$\eta_{x,t}$ is a probability measure on $\mathbb{R}$.  By the
semigroup property we have for any $s>0$
\[
\begin{split}
\dtv(\mu_{x,t+s},\nu)&=\dtv(P_s(\mu_{x,t}),\nu)
=\dtv((1-\theta_{x,t})P_s(\nu)+\theta_{x,t}P_s(\eta_{x,t}),\nu)\\
&=\dtv((1-\theta_{x,t})\nu+\theta_{x,t}P_s(\eta_{x,t}),\nu)=\theta_{x,t}\dtv(P_s(\eta_{x,t}),\nu).
\end{split}
\]
Now, we claim that $\dtv(P_s(\eta_{x,t}),\nu)<1$. Indeed, by disintegration we have 
\begin{equation}
\begin{split}
\dtv(P_s(\eta_{x,t}),\nu)\leq \int_{\mathbb{R}}
\dtv(P_s(z),\nu) \eta_{x,t}(\ud z)=
\int_{\mathbb{R}}
G_{z}(s) \eta_{x,t}(\ud z).
\end{split}
\end{equation}
Hence, $\dtv(P_s(\eta_{x,t}),\nu)=1$ if and only if $G_{z}(s)=1$ for $z$-almost surely with respect to the measure $\eta_{x,t}$. This yields a contradiction with~\eqref{eq:menorqueuno} and hence the proof that the function $G_x$ is strictly decreasing is finished.
\end{proof}
\end{appendix}

\section*{\textbf{Statements and declarations}}
\subsection*{\textbf{Acknowledgments}}
G.~Barrera is greatly indebted to S.~Olla (Universit\'e
Paris-Dauphine, CNRS CEREMADE) for bringing out to our attention the
problem in the degenerate setting.  Part of this work was done during
G.~Barrera stay at the Institut Henri Poincar\'e (Centre Emile Borel)
during the trimester ``Stochastic Dynamics Out of Equilibrium
2017''. He thanks this institution for hospitality and support.  He
also is in debt with FORDECyT-CONACyT-M\'exico for all the support to
attend to the trimester Stochastic Dynamics Out of Equilibrium.
G.~Barrera is indebted to professors 
G.~Giacomin (Universit\'e de
Paris UFR de Math\'ematiques and LPSM) and  J.~Beltr\'an (Pontificia
Universidad Cat\'olica del Per\'u, PUCP) for rich talks at the
beginning of this project and with professor J.~Lukkarinen (University of Helsinki) for rich conversations along the project.
We would like to thank the anonymous referee for their insightful comments and constructive suggestions, which have significantly improved this paper.

\subsection*{\textbf{Funding}}
The research of G.~Barrera has been supported by the Academy of Finland, via the Matter and Materials Profi4 University Profiling Action, an Academy project (project No.~339228) and the Academy of Finland via Finnish Centre of Excellence in Randomness and STructures (projects 
No.~346306 and No.~346308).

\noindent
C. da Costa was supported by the Engineering and Physical Sciences Research Council [EP/W00657X/1].

\subsection*{\textbf{Competing interests}} The authors declare that they have no conflict of interest.

\subsection*{\textbf{Authors' contributions}}
All authors have contributed equally to the paper.

\subsection*{\textbf{Availability of data and material}}
Data sharing not applicable to this article as no datasets were generated or analyzed during the current study.

\subsection*{\textbf{Ethical approval}} Not applicable.
\end{document}